\theoremstyle{plain}
\newtheorem{theorem}{Theorem}
\newtheorem{claim}[theorem]{Claim}
\newtheorem{lemma}[theorem]{Lemma}
\newtheorem{conjecture}[theorem]{Conjecture}
\newtheorem*{plemma}{Partition Lemma}
\newcommand{\Mod}[1]{\ (\mathrm{mod}\ #1)}
\theoremstyle{definition}
\DeclareMathOperator*{\PV}{\mathbb{P}}
\DeclareMathOperator*{\PP}{\mathcal{P}}
\author{Istv\'{a}n Tomon\thanks{\'{E}cole Polytechnique F\'{e}d\'{e}rale de Lausanne, Research partially supported by Swiss National Science Foundation grants no. 200020-162884 and 200021-175977.			
		\emph{e-mail}: \textbf{istvan.tomon@epfl.ch}}
}
\title{Packing the Boolean lattice with copies of a poset}
\date{}
\begin{document}
\sloppy
\maketitle

\begin{abstract}
	Let $P$ be a partially ordered set. We prove that if $n$ is sufficiently large, then there exists a packing $\mathcal{P}$ of copies of $P$ in the Boolean lattice $(2^{[n]},\subset)$ that covers almost every element of $2^{[n]}$: $\mathcal{P}$ might not cover the minimum and maximum of $2^{[n]}$, and at most $|P|-1$ additional points due to divisibility. In particular, if $|P|$ divides $2^{n}-2$, then the truncated Boolean lattice $2^{[n]}-\{\emptyset,[n]\}$ can be partitioned into copies of $P$. This confirms a conjecture of Lonc from 1991.
\end{abstract}

\section{Introduction}

Let $P$ and $Q$ be posets (partially ordered sets). A subset $Q'$ of $Q$ is a \emph{copy} of $P$ if the subposet of $Q$ induced on $Q'$ is isomorphic to $P$. A \emph{$P$-packing} of $Q$ is a family of disjoint copies of $P$ in $Q$, and a \emph{$P$-partition} is a $P$-packing that covers every element of $Q$.

The Boolean lattice $2^{[n]}$ is the partially ordered set on the power set of $[n]=\{1,...,n\}$, in which the ordering is given by the inclusion relation. Lonc proved \cite{L}, settling a conjecture of Sands \cite{S} and Griggs \cite{Gri}, that if $P$ is a chain of size $h$ and $n\geq 2^{2^{36h^{2}}}$, then $2^{[n]}$ has a $P$-packing that covers all but at most $h-1$ elements of $2^{[n]}$. In particular, if $h$ is a power of $2$, then $2^{[n]}$ has a $P$-partition. The bound on $n$ was improved by the author of this paper \cite{T}: the assumption $n>500h^{2}$ is enough, and this bound is the best possible up to the constant factor.

 Lonc \cite{L} conjectured two natural extensions of his result, where chain is replaced with an arbitrary poset $P$. Clearly, if $2^{[n]}$ has a $P$-partition for some positive integer $n$, then the size of $P$ must be a power of $2$, and $P$ must have a unique minimum and maximum\footnote{otherwise, there is no copy of $P$ covering $[n]$ or $\emptyset$ in $2^{[n]}$}. The first conjecture states that these conditions are also sufficient to guarantee the existence of a $P$-partition in $2^{[n]}$ for $n$ sufficiently large. This conjecture was recently verified by Gruslys, Leader and Tomon \cite{GLT2} for $n=2^{|P|^{\Omega(1)}}$. We give a new, simpler proof of this result which also improves the bound on $n$.
 
 \begin{theorem}\label{mainthm0}
 	Let $P$ be a poset with a unique minimum and maximum, and size $2^{k}$. If $n\geq c|P|^{8}$, where $c$ is some absolute constant, then $2^{[n]}$ can be partitioned into copies of $P$.
 \end{theorem}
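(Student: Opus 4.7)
The plan is to deduce Theorem~\ref{mainthm0} from the main packing result announced in the abstract (hereafter the \emph{Partition Lemma}), together with an explicit ``spanning'' copy of $P$ that handles $\emptyset$ and $[n]$.

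\textbf{Step 1 (A spanning copy $P_0$).} Since $P$ has a unique minimum $\hat 0$ and maximum $\hat 1$ and $n\ge|P|$, I would first embed $P$ into $2^{[n]}$ so that $\hat 0\mapsto\emptyset$ and $\hat 1\mapsto[n]$. Enumerating $P=\{p_1=\hat 0,p_2,\ldots,p_{m-1},p_m=\hat 1\}$ with $m=|P|=2^k$, set
\[
\phi(p_i)=\{j:2\le j\le m-1,\ p_j\le p_i\}\ \text{for } i<m,\qquad \phi(p_m)=[n].
\]
Order-preservation is immediate; for order-reflection, note that on $P\setminus\{\hat 1\}$ this is the usual down-set embedding into $2^{[m-1]}$, while $\phi(p_m)=[n]$ strictly contains every $\phi(p_i)\subseteq[m-1]$ for $i<m$. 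Call the image $P_0\subset 2^{[n]}$.

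\textbf{Step 2 (Partition the complement).} The complement $2^{[n]}\setminus P_0$ has $2^n-2^k$ elements, a multiple of $|P|$. I would then apply the Partition Lemma to partition $2^{[n]}\setminus P_0$ into copies of $P$. Indeed, the Partition Lemma produces a packing of $2^{[n]}$ missing at most $|P|+1$ points including $\emptyset$ and $[n]$; since $|P|=2^k\mid 2^n$, the count of uncovered points is a multiple of $|P|$, hence equal to $0$ or exactly $|P|$. Engineering the uncovered set to coincide with $P_0$---which the proof of the Partition Lemma should accommodate, since any copy of $P$ containing $\emptyset$ and $[n]$ may be set aside at the start of the recursive/product construction used there---yields the desired partition of $2^{[n]}\setminus P_0$.

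\textbf{Step 3 (Combine).} Adjoining $P_0$ to the partition from Step~2 gives the required $P$-partition of $2^{[n]}$.

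\textbf{Main obstacle.} The real work lies in proving the Partition Lemma itself; the derivation of Theorem~\ref{mainthm0} above is essentially divisibility bookkeeping plus the short embedding of Step~1. The only genuinely delicate point in this derivation is justifying that the up to $|P|-1$ ``defect'' elements in the Partition Lemma may be prescribed to coincide with $P_0\setminus\{\emptyset,[n]\}$. I expect this to follow from the structure of the lemma's proof---for instance, a product decomposition $2^{[n]}=2^{[m]}\times 2^{[n-m]}$ with $P_0$ sitting in a small sub-cube that is removed before running the main packing construction on the rest.
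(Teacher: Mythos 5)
Your Step 1 embedding is fine, but the reduction collapses at Step 2 for two independent reasons. First, the packing result you invoke (Theorem \ref{mainthm2}) guarantees only that \emph{some} set of at most $|P|-1$ elements of $T(n)$ is left uncovered; it gives no control over \emph{which} elements these are. Your divisibility count correctly forces exactly $|P|$ uncovered elements of $2^{[n]}$ (including $\emptyset$ and $[n]$), but these $|P|$ elements need not form a copy of $P$, let alone coincide with your prescribed $P_0$. ``Engineering the uncovered set to coincide with $P_0$'' is essentially the whole difficulty---being able to prescribe the defect set is tantamount to the partition statement itself---and nothing in the statement or proof of the packing theorem accommodates setting aside a copy through $\emptyset$ and $[n]$ at the start: that proof is a probabilistic absorber construction in which the defect ends up inside one distinguished absorber $A_{0}$, not at a location of your choosing, and the absorbers are explicitly built to avoid $\emptyset$, $[n]$ and the sets of size $1$ and $n-1$.

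Second, even granting the engineering, you would inherit the bound $n\geq n_{0}(P)=2^{2^{O(|P|\log|P|)}}$ from the general packing theorem, far from the claimed $n\geq c|P|^{8}$. The paper proves Theorem \ref{mainthm0} by a separate, self-contained and much more elementary route (Theorem \ref{mainthm1}): using a $d$-dimensional realizer of $P$ it partitions the two-layer grid $\frac{[h]^{d}}{[h]^{d}}$ into copies of $P$ (Claims \ref{densepacking} and \ref{pair}), then partitions $[2h]^{m}$ into copies of $\frac{[h]^{d}}{[h]^{d}}$ via a comparability matching in $2^{[m]}$ (Claims \ref{matching} and \ref{grid}), and finally partitions $2^{[n]}$ into copies of $[2h]^{m}$ via chain decompositions. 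There the unique minimum and maximum of $P$ are used to shuttle extremal elements between the two halves of $\frac{[h]^{d}}{[h]^{d}}$, not to place a copy through $\emptyset$ and $[n]$. You would need to supply an argument of this kind rather than reduce to Theorem \ref{mainthm2}.
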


The second conjecture of Lonc is concerned with posets $P$ which do not necessarily satisfy that the size of $P$ is a power of $2$, or have a unique minimum and maximum. In this case,  it is still reasonable to believe that there exists a $P$-packing in $2^{[n]}$ that covers almost everything. More precisely, the conjecture states that if $n$ is sufficiently large and $2^{n}-2$ is divisible by $|P|$, then the truncated Boolean lattice $2^{[n]}-\{\emptyset,[n]\}$ has a $P$-partition. This conjecture was verified by Lonc \cite{L2} in the case $P$ is an antichain, or $|P|\leq 4$. Also, Gruslys, Leader and Tomon \cite{GLT2} proposed a relaxation of this conjecture. That is, there exists a constant $c(P)$ such that $2^{[n]}$ has a $P$-packing that covers all but at most $c(P)$ elements of $2^{[n]}$ for every $n$. This conjecture was verified by the author of this paper \cite{T2} in case $P$ has a unique minimum and maximum (but size not necessarily a power of $2$). 

We settle both of the aforementioned conjectures in the following theorem.

\begin{theorem}\label{mainthm2}
	Let $P$ be a poset. There exists $n_{0}=n_{0}(P)$ such that if $n\geq n_{0}$, then there exists a $P$-packing $\mathcal{P}$ of $2^{[n]}-\{\emptyset,[n]\}$  such that the number of elements not covered by $\mathcal{P}$ is  at most $|P|-1$.
\end{theorem}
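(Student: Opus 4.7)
Plan: The plan is to prove Theorem \ref{mainthm2} by reducing it to Theorem \ref{mainthm0} via an augmentation of $P$ together with a recursive absorption of leftover elements. Let $m=|P|$.

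First, I would construct an augmented poset $P^*$ that contains $P$ as an induced subposet, has a unique minimum $\hat{0}$ and unique maximum $\hat{1}$, and has size $|P^*|=2^k$ for some $k$. I would arrange $P^*$ so that $P^*\setminus\{\hat{0},\hat{1}\}$ admits a $P$-packing covering all but $r=(2^k-2)\bmod m\leq m-1$ of its elements. A concrete construction takes $P^*$ to consist of $\hat{0}$, $\hat{1}$, a disjoint union of $\lfloor (2^k-2)/m\rfloor$ copies of $P$, and $r$ additional isolated elements, all placed strictly between $\hat{0}$ and $\hat{1}$; the parameter $k$ is chosen to minimise $r$.

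Next, by Theorem \ref{mainthm0} (applied for $n$ sufficiently large relative to $|P^*|$), the Boolean lattice $2^{[n]}$ can be partitioned into copies of $P^*$. Inside each such copy I extract copies of $P$ using the decomposition of $P^*$ from the first step; the uncovered remainder in each copy consists of its images of $\hat{0}$, $\hat{1}$, and the $r$ buffer elements. Summed over all copies this already leaves roughly $(r+2)\cdot 2^n/|P^*|$ uncovered elements of $2^{[n]}$, which is vastly larger than the permitted $|P|-1$, so the crucial task is to reduce this remainder.

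To reduce it, I would work inside the product decomposition $2^{[n]}=2^{[n-k]}\times 2^{[k]}$, so that $2^{[n]}$ is a disjoint union of $2^{n-k}$ fibers, each isomorphic to $2^{[k]}$. I would apply the packing above \emph{in a coordinated way across fibers}, so that the uncovered elements occupy matching positions in every fiber; under this coordination, the leftover inside $2^{[n]}$ is itself a small disjoint union of sub-lattices of the form $2^{[n-k]}$, embedded along the sub-cube spanned by $[n-k]$. I would then recurse the entire construction on each of these smaller Boolean lattices. Each round of recursion shrinks the uncovered set by a factor of $\Theta(2^k)$, so after $O(n/k)$ rounds what remains is of constant size. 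This constant-size residue, together with $\emptyset$ and $[n]$, can be pared down to at most $|P|-1$ points by a finite explicit packing, giving the claimed bound.

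The main obstacle will be making the recursive step go through. Theorem \ref{mainthm0} by itself only gives an unstructured partition of $2^{[n]}$ into copies of $P^*$, whereas the recursion requires the leftover to be a \emph{nested} Boolean lattice. Achieving this probably means not applying Theorem \ref{mainthm0} directly to $2^{[n]}$, but instead applying it only to the $2^{[n-k]}$ factor (or to a suitable small poset of copies of $P^*$), and then using the local near-partition of each fiber $2^{[k]}$ as the base case. Verifying that the resulting leftover really is a sub-cube on which the induction can be performed — and controlling the two distinguished points $\emptyset$ and $[n]$ all the way down the recursion so that they are among the at most $|P|-1$ uncovered elements at the end — will require the most care.
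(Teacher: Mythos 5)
Your reduction has a fatal quantitative flaw in the recursive step, and the recursion is where all the difficulty of the theorem lives. Suppose each fiber $\{x\}\times 2^{[k]}$ is packed so that the uncovered positions form the same set $U\subset 2^{[k]}$ with $|U|=u$. After one round the leftover is $u$ copies of $2^{[n-k]}$; recursing independently on each of them, after $t$ rounds you have $u^{t}$ copies of $2^{[n-tk]}$, i.e.\ $2^{n}(u/2^{k})^{t}$ uncovered elements in total. The shrink factor per round is $2^{k}/u$, not $\Theta(2^{k})$, and running the recursion to the bottom ($t\approx n/k$) leaves $u^{n/k}$ points --- exponential in $n$ unless $u=1$. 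Worse, the value of $u$ your construction actually supplies is large: a fiber $2^{[k]}$ is not a single copy of $P^{*}$, so to pack it you must either partition it into $2^{k}/|P^{*}|$ copies of $P^{*}$ via Theorem \ref{mainthm0} (leaving $u=(r+2)2^{k}/|P^{*}|$, a positive \emph{proportion} of the fiber) or invoke a packing of $2^{[k]}$ with only $O(1)$ uncovered elements --- which is precisely the statement being proved, so the base case is circular. Divisibility also rules out $u=1$ whenever $|P|$ is even, since that would require $|P|\mid 2^{k}-1$. In short, the leftovers from imperfect fiber packings accumulate multiplicatively through the recursion rather than being absorbed, and nothing in your outline can absorb them.

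The missing ingredient is exactly what the paper builds: an \emph{absorber}, a family $A\subset 2^{[n]}$ with the property that $A-R$ has a $P$-partition for every small $R\subset A$ with $|P|$ dividing $|A-R|$ (Claim \ref{absorber}). The actual proof places a randomly chosen collection of pairwise disjoint absorbers so that every element of $T(n)$ is ``completed'' by one of them (Claim \ref{absorber_packing}), matches uncovered ordinary points to absorbers via Hall's theorem (Claim \ref{absorber_matching}), handles the elements near $\emptyset$ and $[n]$ by separate explicit packings (Claims \ref{problematic} and \ref{smallsets}, Lemma \ref{minmaxlemma}), and fixes the residues modulo $|P|$ inside each absorber with a realizability lemma (Lemma \ref{modp}) before letting the absorbers swallow what remains. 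Your augmented poset $P^{*}$ and the product decomposition are reasonable starting moves, but without a mechanism that can take an \emph{arbitrary} small defect set and finish it off exactly, the error terms cannot be driven below a constant, let alone below $|P|-1$.
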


Our paper is organized as follows. In the next subsections, we discuss some related partitioning results and we introduce our notation. In Section \ref{sect:thm0}, we prove Theorem \ref{mainthm0}. In Section \ref{sect:thm2}, we prove Theorem \ref{mainthm2}. We finish our paper with some remarks and open problems in Section \ref{sect:remarks}.

\subsection{Related work}

Recently, problems in which the goal is to partition certain product structures, such as $\mathbb{Z}^{n}$ \cite{GLT1}, the graph of the hypercube \cite{Gru,GL}, the Boolean lattice \cite{GLT2,T2}, graph powers of cycles \cite{BMS}, into copies\footnote{the definition of copy varies according to the structure we are interested in} of some set gained increased interest due to a remarkable result of Gruslys, Leader and Tan \cite{GLT1}. Let us briefly outline their main lemma.

\begin{plemma}Let $X$ be a finite set and let $\mathcal{F}$ be a family of subsets of $X$. Suppose that there exist a positive integer $t$, and two multisets $\mathcal{F}_{1},\mathcal{F}_{2}$ of elements of $\mathcal{F}$ such that every $x\in X$ is covered by exactly $t$ elements of $\mathcal{F}_{1}$, and $1\Mod t$ elements of $\mathcal{F}_{2}$. Then for sufficiently large $n$, the cartesian power $X^{n}$ can be partitioned into copies of elements of $\mathcal{F}$. Here, $F'\subset X^{n}$ is a copy of  $F\in \mathcal{F}$ if $F'$ can be written as $$\{x_{1}\}\times...\times \{x_{i-1}\}\times F\times\{x_{i+1}\}\times\{x_{n} \},$$
for some $i\in [n]$ and $x_{1},\dots,x_{i-1},x_{i+1},\dots,x_{n}\in X$.
\end{plemma}

 While this lemma proved to be powerful in partitioning problems, we were unable to adapt it for "almost partitioning" problems. That is, for problems in which there exists no desired partition for some obvious reason (for example divisibility is not satisfied), but we are still hoping to find a packing that covers almost every element. See \cite{GL,T2} for such results. Therefore, in this paper we develop our own packing method to deal with Theorem \ref{mainthm0} and Theorem \ref{mainthm2}, which exploits the flexibility of copies of posets, and we do not utilize the previously described lemma in any way.

\subsection{Preliminaries and notation}\label{sect:prelim}
If $m\leq n$ are integers, let $[m,n]=\{m,m+1,...,n\}$.

If $(P,\leq_{P})$ is a partially ordered set, we may write simply $P$ when referring to $(P,\leq_{P})$. In this paper, every set is endowed with at most one partial order, so this should not lead to any confusion. Also, we use $\leq$ instead of $\leq_{P}$ if it is clear from the context which poset is under consideration.

 If $(P_{1},\leq_{1}),\dots,(P_{k},\leq_{k})$ are partially ordered sets, then the cartesian product $P_{1}\times\dots\times P_{k}$ is endowed with the pointwise ordering $\leq$:  $(x_{1},\dots,x_{k})\leq(y_{1},\dots,y_{k})$ if $x_{i}\leq_{i} y_{i}$ for $i\in [k]$. A \emph{grid} is a poset isomorphic to the cartesian product $[a_{1}]\times\dots\times [a_{k}]$, where $[a_{i}]$ is endowed with the natural total order. We use the signs $\prec$ or $\preceq$ to denote comparabilities between elements of a grid. The \emph{size} of the grid $[a_{1}]\times...\times [a_{k}]$ is the formal expression $a_{1}\times...\times a_{k}$. Note that $2^{[n]}\cong[2]^{n}$.

The \emph{dimension} (Dushnik-Miller dimension) of a poset $P$ is the smallest positive integer $d$ such that the grid $[k]^{d}$ contains a copy of $P$ for some $k$. Also, this is equal to the smallest number $d$ for which there exist $d$ bijections $\pi_{1},\dots,\pi_{d}:P\rightarrow [|P|]$ such that for every $p,q\in P$,  $p\leq_{P} q$ if and only if $\pi_{i}(p)\leq \pi_{i}(q)$ for $i\in [d]$.

If $A$ is a subset of $B$, we write $B-A$ instead of $B\setminus A$, and if $A$ is a one element set $\{a\}$, then we write $A-a$ instead of $A-\{a\}$.

Finally, we shall work with multiple levels of containment, especially in Section \ref{sect:thm2}. To avoid confusion, we refer to elements of $[n]$ as \emph{base elements}, subsets of $[n]$ as just \emph{elements} or \emph{sets}, subsets of $2^{[n]}$ as \emph{families}, and subsets of $2^{2^{[n]}}$ as \emph{collections}. For example, a copy of $P$ in $2^{[n]}$ is a family, while a $P$-packing is a collection.

\section{Posets with unique minimum and maximum}\label{sect:thm0}

Instead of Theorem \ref{mainthm0}, we shall prove the following slightly stronger result.

\begin{theorem}\label{mainthm1}
	Let $P$ be a poset with a unique minimum and maximum, size $2^{k}$ and dimension $d$. If $n=\Omega(d^{4}|P|^{4})$, then $2^{[n]}$ can be partitioned into copies of $P$.
\end{theorem}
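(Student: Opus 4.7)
The plan is to exploit the dimension hypothesis to embed $P$ into a grid, to establish the theorem first for large $d$-dimensional grids, and then to transfer the result to the Boolean lattice via a product-of-chains decomposition of $2^{[n]}$.

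First, since $\dim(P)=d$ there exist bijections $\pi_1,\dots,\pi_d\colon P\to[|P|]$ giving an order-embedding $\iota\colon P\hookrightarrow[|P|]^d$. I would normalise so that the unique minimum of $P$ lands at $(1,\dots,1)$ and the unique maximum at $(|P|,\dots,|P|)$, which anchors $\iota(P)$ at the two diagonal corners of the box $[|P|]^d$. This anchoring is what will let two copies of $P$ sitting back-to-back along a coordinate axis be \emph{glued} into a larger structure by lining the top corner of one up with the bottom corner of the next.

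The main auxiliary step would be a \emph{grid lemma}: any $d$-dimensional grid $[N_1]\times\cdots\times[N_d]$ with each $N_i$ a multiple of $|P|$ and each $N_i\ge C\,d|P|^2$ admits a $P$-partition. The construction tiles the grid by translates of the box $[|P|]^d$, places a copy of $\iota(P)$ inside each box, and then combines the $2^k(2^{k(d-1)}-1)$ leftover cells of adjacent boxes into further copies of $P$ using the anchored corners. The divisibility $|P|=2^{k}$ makes the cell counts line up, and the polynomial slack in the $N_i$ provides enough room to perform the pairing without creating orphan cells.

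To pass from grids back to $2^{[n]}$, I would partition $[n]$ into $d$ blocks of size $m\approx n/d$, identifying $2^{[n]}$ with $([2]^m)^d$, and apply a symmetric chain decomposition in each factor. This expresses $2^{[n]}$ as a disjoint union of $d$-fold products of chains, i.e.\ as a disjoint union of rectangular grids $[\ell_1]\times\cdots\times[\ell_d]$. For $m$ large compared with $Cd|P|^2$, the vast majority of these grids have every side at least $Cd|P|^2$ and are $P$-partitionable by the grid lemma; the remaining short-chain grids are reserved to absorb the leftover. The principal obstacle will be this last step: the short-chain grids individually need not have size divisible by $|P|$, so one must combine fragments across block boundaries into further copies of $P$. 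It is this cross-block gluing that I expect to force the quartic bound $n=\Omega(d^4|P|^4)$---roughly, one factor of $d|P|^2$ per axis is consumed by the grid lemma itself and a second factor by the re-routing needed to clear the short-chain debris.
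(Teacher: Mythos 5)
Your proposal contains two genuine gaps, both at the points you yourself flag as delicate. The first is the ``grid lemma'': you assert that a single $d$-dimensional grid with all sides divisible by $|P|$ and sufficiently long admits a $P$-partition, with the leftover cells of adjacent $[|P|]^d$ boxes ``combined into further copies of $P$ using the anchored corners.'' This is unsubstantiated and is not what the paper proves. After placing one copy of $\iota(P)$ in each box $[|P|]^d$, the remaining $|P|^d-|P|$ cells per box have no reason to decompose into copies of $P$: a copy of $P$ needs the right induced order, and anchoring at corners only helps you relocate a single minimum or maximum, not absorb an arbitrary leftover region. The paper's dense packing of a grid (Claim \ref{densepacking}) instead covers all but a boundary layer of about $d|P|h^{d-1}$ cells, and then deliberately does \emph{not} try to finish within one grid. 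It takes \emph{two} stacked copies $H_0,H_1$ of $[h]^d$ with every element of $H_1$ above every element of $H_0$; because of this total comparability between layers, a minimum of a copy in $H_1$ can be moved to \emph{any} hole of $H_0$ (and dually), and a second-level packing of the set of minima/maxima frees up exactly the right elements to do this (Claim \ref{pair}). Without this two-layer absorption, your single-grid claim is a missing idea, not a routine verification; indeed the paper's concluding remarks indicate that roughly $2d-1$ dimensions, not $d$, are needed to partition a grid into copies of $P$.

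The second gap is the passage back to $2^{[n]}$. A symmetric chain decomposition of each of your $d$ blocks produces chains of \emph{all} lengths, so the resulting products include many grids with short sides, carrying no divisibility structure; you defer this ``short-chain debris'' to an unspecified cross-block gluing, which is exactly where such arguments break down. The paper avoids the debris entirely: it invokes Lemma \ref{longchain} (the author's earlier theorem) to partition $2^{[s]}$ into chains of \emph{equal} length $2h$ (a power of $2$, with $s=\Omega(h^2)$), so that $2^{[n]}=(2^{[s]})^m$ splits perfectly into copies of $[2h]^m$ with nothing left over. It then needs one more ingredient you do not have: a way to partition $[2h]^m$ into copies of the stacked double grid $\frac{[h]^d}{[h]^d}$, done via a matching of $2^{[m]}$ into comparable pairs differing in at least $d$ coordinates (Claims \ref{matching} and \ref{grid}); this is the source of the $m=\Omega(d^2)$ factor. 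Consequently your accounting for the bound is also off: the paper gets $n=sm=O(h^2d^2)=O(|P|^4d^4)$ from the equal-chain partition times the matching dimension, not from per-axis slack in a grid lemma.
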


 By a result of Hiraguchi \cite{Hi}, we have $d\leq |P|/2$ for every poset $P$, so Theorem \ref{mainthm1} truly implies Theorem \ref{mainthm0}. The proof of Theorem \ref{mainthm1} can be outlined in $3$ simple steps. Let $\frac{[h]^{d}}{[h]^{d}}$ denote the poset which is the union of two disjoint copies of $[h]^{d}$, $H_{0}$ and $H_{1}$, and every element of $H_{1}$ is larger than every element of $H_{0}$. 

\noindent
Step 1: We prove that if $h=\Omega(d|P|^{2})$ and $|P|$ divides $h$, then $\frac{[h]^{d}}{[h]^{d}}$ has a $P$-partition. 

\noindent
Step 2: We show that if $m=\Omega(d^{2})$, then $[2h]^{m}$ has an $\frac{[h]^{d}}{[h]^{d}}$-partition.

\noindent
Step 3: We conclude by showing that if $n=\Omega(h^{2}m)$ and $h$ is a power of $2$, then $2^{[n]}$ has a $[2h]^{m}$-partition.

\bigskip

Now let us start with Step 1. Let $H_{0},H_{1}$ be the two copies of $[h]^{d}$ forming $\frac{[h]^{d}}{[h]^{d}}$, where every element of $H_{1}$ is larger then every element of $H_{0}$. Roughly saying, first, we find a dense $P$-packing $\mathcal{P}_{i}$ in $H_{i}$ for $i=0,1$, this is done in Claim \ref{densepacking}. Then, we construct a dense $P$-packing  $\mathcal{M}_{1}$ in the set of minimums of $\mathcal{P}_{1}$, and a dense $P$-packing $\mathcal{M}_{0}$ in the maximums of $\mathcal{P}_{0}$. Then, we move the maximums of the members of  $\mathcal{P}_{0}$ covered by $\mathcal{M}_{0}$ to fill up uncovered elements of $H_{1}$, and move minimums of members of $\mathcal{P}_{1}$ covered by $\mathcal{M}_{1}$ to fill uncovered elements of $H_{1}$. This is done in Claim \ref{pair}.

\begin{figure}
	\begin{center}
		\includegraphics[scale=0.5]{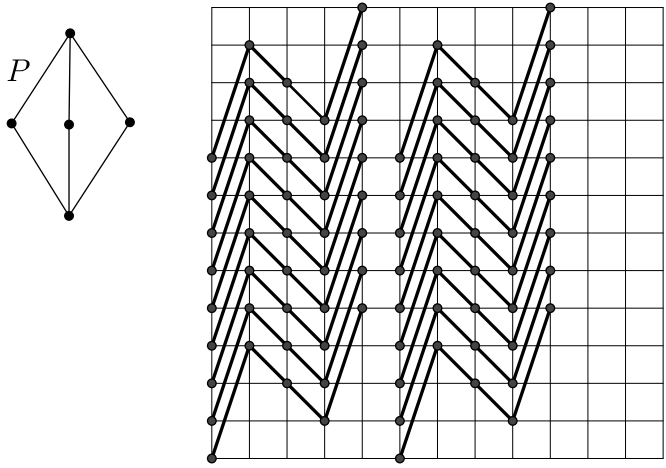}
	\end{center}
	\caption{A dense packing of $[13]\times [13]$ with copies of the $5$ element poset $P$.}
	\label{image}
\end{figure}

\begin{claim}\label{densepacking}
Let $P$ be a poset with a unique minimum and maximum, and let $d$ be the dimension of $P$. Also, let $h_{1},\dots, h_{d}$ be positive integers and let $H=[h_{1}]\times\dots\times [h_{d}]$. Then $H$ contains a $P$-packing $\mathcal{P}$ with the following properties:

(1) the members of $\mathcal{P}$ cover at least $(h_{1}-|P|)\dots(h_{d}-|P|)$ elements,

(2)	the minimum and maximum elements of the members of $\mathcal{P}$ form two grids, each of size $$\lfloor h_{1}/|P|\rfloor\times(h_{2}-|P|)\times\dots\times (h_{d}-|P|).$$
\end{claim}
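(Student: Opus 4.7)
The plan is to leverage the dimension hypothesis to embed $P$ into the cube $[s]^d$ where $s=|P|$, and build $\mathcal{P}$ as a carefully chosen family of translates of this embedding. By definition of dimension, there exist bijections $\pi_1,\dots,\pi_d:P\to[s]$ such that $p\leq_P q$ if and only if $\pi_i(p)\leq\pi_i(q)$ for every $i\in[d]$, and these assemble into an order-embedding $\phi:P\to[s]^d$ given by $\phi(p)=(\pi_1(p),\dots,\pi_d(p))$. Since $\hat 0$ and $\hat 1$ are the unique minimum and maximum of $P$ and each $\pi_i$ is a bijection onto $[s]$, we automatically get $\phi(\hat 0)=(1,\dots,1)$ and $\phi(\hat 1)=(s,\dots,s)$; this is exactly what will give the clean grid structure of minimums and maximums demanded by (2).

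For the packing itself, I would let $\mathcal{P}$ consist of the translates
$$\phi(P)+(ks,\,j_2-1,\,\dots,\,j_d-1),\qquad k\in\{0,1,\dots,\lfloor h_1/s\rfloor-1\},\ j_i\in[h_i-s]\text{ for }i\geq 2.$$
A direct range check shows each translate lies in $H$: the first coordinate is at most $(k+1)s\leq h_1$, and the $i$-th coordinate for $i\geq 2$ is at most $j_i+s-1\leq h_i-1$. The nontrivial point is pairwise disjointness. Translates with distinct $k$'s are separated because their first-coordinate sets $[1+ks,\,s+ks]$ are pairwise disjoint ``slabs''. For two translates with the same $k$ but distinct $(j_2,\dots,j_d)$, a common point would give $\phi(p)+(ks,j_2-1,\dots)=\phi(q)+(ks,j_2'-1,\dots)$; the first coordinate forces $\pi_1(p)=\pi_1(q)$, so $p=q$ by bijectivity of $\pi_1$, and then the remaining coordinates force $j_i=j_i'$, a contradiction. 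This disjointness step is the only subtle part of the argument, and it is precisely where bijectivity of the $\pi_i$ (rather than mere order-preservation) is used.

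Once disjointness is in hand, (1) and (2) drop out of the construction. The minimum of the copy indexed by $(k,j_2,\dots,j_d)$ is $\phi(\hat 0)+(ks,j_2-1,\dots,j_d-1)=(1+ks,j_2,\dots,j_d)$, and as the parameters range over their indicated sets these points, with the order inherited from $H$, form precisely a grid of size $\lfloor h_1/s\rfloor\times(h_2-s)\times\cdots\times(h_d-s)$; the argument for maximums is identical. For (1), the number of elements covered is $|P|$ times the number of copies, and the bound follows from the elementary inequality $s\lfloor h_1/s\rfloor\geq h_1-s$. I do not expect any serious obstacle beyond this bookkeeping, as the whole proof is essentially a clever choice of translates dictated by the dimension embedding.
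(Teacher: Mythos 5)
Your proposal is correct and is essentially identical to the paper's own proof: the same dimension-based embedding with the minimum forced to $(1,\dots,1)$ and the maximum to $(|P|,\dots,|P|)$, the same family of translates (spaced by $|P|$ in the first coordinate and by $1$ in the others), and the same disjointness argument via the first coordinate. No changes needed.
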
	

\begin{proof}
	By the defnition of dimension, there exist $d$ bijections $\pi_{1},\dots,\pi_{d}:P\rightarrow [|P|]$ such that for any $p,q\in P$, we have $p\leq q$ iff $(\pi_{1}(p),\dots,\pi_{d}(p))\preceq (\pi_{1}(q),\dots,\pi_{d}(q))$. If $m_{0}$ and $m_{1}$ are the unique minimum and maximum of $P$, respectively, then ${\pi_{1}(m_{0})=\dots=\pi_{d}(m_{0})=1}$ and ${\pi_{1}(m_{1})=\dots=\pi_{d}(m_{1})=|P|}$.
	
	For $(a_{1},\dots,a_{d})\in [\lfloor h_{1}/|P|\rfloor]\times[h_{2}-|P|]\times\dots\times [h_{d}-|P|]$, let $$P_{a_{1},\dots,a_{d}}=\{(\pi_{1}(p)+(a_{1}-1)|P|,\pi_{2}(p)+a_{2},\dots,\pi_{d}(p)+a_{d}):p\in P\}\subset H.$$ 
	We show that $\mathcal{P}=\{P_{a_{1},\dots,a_{d}}:(a_{1},\dots,a_{d})\in [\lfloor h_{1}/|P|\rfloor]\times[h_{2}-|P|]\times\dots\times [h_{d}-|P|]\}$ is a $P$-packing satisfying (1) and (2). See Figure \ref{image} for an illustration of this packing.
	
	First of all, $P_{a_{1},\dots,a_{d}}$ is truly a copy of $P$. Indeed, it is a translate of the set $\{(\pi_{1}(p),\dots,\pi_{d}(p)):p\in P\}$, which is copy of $P$ by definition. Now we show that $\mathcal{P}$ is a packing. Suppose that $P_{a_{1},\dots,a_{d}}$ and $P_{b_{1},\dots,b_{d}}$ intersect, then $$(\pi_{1}(p)+(a_{1}-1)|P|,\pi_{2}(p)+a_{2},\dots,\pi_{d}(p)+a_{d})=(\pi_{1}(q)+(b_{1}-1)|P|,\pi_{2}(q)+b_{2},\dots,\pi_{d}(q)+b_{d}),$$
	for some $p,q\in P$. In particular, $\pi_{1}(p)+(a_{1}-1)|P|=\pi_{1}(q)+(b_{1}-1)|P|$. But $\pi_{1}(p),\pi_{1}(q)\in[|P|]$, so this is only possible if $a_{1}=b_{1}$ and $p=q$. Now if $\pi_{i}(p)+a_{i}=\pi_{i}(q)+b_{1}$, we have $a_{i}=b_{i}$ as well. Thus, $(a_{1},\dots,a_{d})=(b_{1},\dots,b_{d})$.
	
	Now (1) clearly holds, as $\mathcal{P}$ has $\lfloor h_{1}/|P|\rfloor(h_{2}-|P|)\dots(h_{d}-|P|)$ members, each of size $|P|$. Finally, (2) holds as the set of minimums of the members of $\mathcal{P}$ is $$\{1,|P|+1,\dots,|P|\lfloor h_{1}/|P|-1\rfloor+1\}\times \{2,\dots,h_{2}-|P|+1\}\times\dots\times \{2,\dots,h_{d}-|P|+1\},$$
	and the set of maximums is
	$$\{|P|,2|P|,\dots,|P|\lfloor h_{1}/|P|\rfloor\}\times \{|P|+1,\dots,h_{2}\}\times\dots\times \{|P|+1,\dots,h_{d}\}.$$
\end{proof}

\begin{claim}\label{pair}
	Let $P$ be a poset with a unique minimum and maximum, and dimension $d$. Let $h,k, n$ be positive integers such that $h\geq 2d|P|^{2}$ and $h$ is divisible by $|P|$.   Then $\frac{[h]^{d}}{[h]^{d}}$ has a $P$-partition.
\end{claim}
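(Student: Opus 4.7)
The plan is to follow the three-step approach sketched in the paragraph before Claim \ref{densepacking}. Write $H_0$ and $H_1$ for the two copies of $[h]^d$ making up $\frac{[h]^d}{[h]^d}$. First apply Claim \ref{densepacking} to each of $H_0$ and $H_1$ separately with $h_1=\dots=h_d=h$, obtaining $P$-packings $\mathcal{P}_0$ and $\mathcal{P}_1$. Because $|P|$ divides $h$, each $\mathcal{P}_i$ has exactly $(h/|P|)(h-|P|)^{d-1}$ members and covers exactly $h(h-|P|)^{d-1}$ elements, leaving a set $U_i\subseteq H_i$ of $U:=h^d-h(h-|P|)^{d-1}$ uncovered points. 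Moreover, the set of maximums of $\mathcal{P}_0$ and the set of minimums of $\mathcal{P}_1$ are grids of size $(h/|P|)\times (h-|P|)^{d-1}$, so Claim \ref{densepacking} can be applied a second time to produce $P$-packings $\mathcal{M}_0$ inside the max-grid of $\mathcal{P}_0$ and $\mathcal{M}_1$ inside the min-grid of $\mathcal{P}_1$.

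The key step is a swap operation. Given $Q\in\mathcal{M}_0$, the $|P|$ members of $\mathcal{P}_0$ whose maximums lie in $Q$ are each stripped of their top element and paired bijectively with $|P|$ distinct points of $U_1$; if $P'\in\mathcal{P}_0$ is paired with $y\in U_1$, then $P'$ is replaced by $(P'-\max P')\cup\{y\}$. This new set is a copy of $P$ because every element of $H_1$ is above every element of $H_0$, so $y$ takes over the role of the unique maximum of $P$. The set $Q$ itself is already a copy of $P$ living entirely in $H_0$, so it is retained as a new member of the packing. Doing this for every $Q\in\mathcal{M}_0$ covers $|P|\cdot|\mathcal{M}_0|$ previously uncovered points of $U_1$; the symmetric operation using $\mathcal{M}_1$ and new minimums chosen from $U_0$ covers $|P|\cdot|\mathcal{M}_1|$ points of $U_0$.

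All that remains is the counting. Trimming $\mathcal{M}_0$ down to exactly $U/|P|$ members (and $\mathcal{M}_1$ likewise) makes the swaps cover $U_1$ and $U_0$ exactly. Divisibility of $U$ by $|P|$ is automatic from $U=h[h^{d-1}-(h-|P|)^{d-1}]$, where both factors are multiples of $|P|$. The main obstacle is checking that $\mathcal{M}_0$ supplied by Claim \ref{densepacking} is large enough, i.e.\ that $(h/|P|-|P|)(h-2|P|)^{d-1}\geq U$. A direct estimate using $U\leq (d-1)|P|h^{d-1}$ shows this inequality holds whenever $h\geq 2d|P|^2$, with the slack in the factor $2$ absorbing the lower-order corrections from the $-|P|$ and $-2|P|$ terms. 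With $\mathcal{M}_0$ and $\mathcal{M}_1$ trimmed appropriately, the collection of unmodified $\mathcal{P}_i$ members, modified $\mathcal{P}_i$ members (which span $H_0$ and $H_1$), and the members of $\mathcal{M}_0,\mathcal{M}_1$ is a $P$-partition of $\frac{[h]^d}{[h]^d}$; disjointness and the isomorphism type of each piece are immediate from the construction.
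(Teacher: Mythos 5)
Your proposal is correct and follows essentially the same route as the paper's proof: two applications of Claim \ref{densepacking} (once to each half, once to the grid of maxima/minima), followed by relocating the maxima of $\mathcal{P}_0$-members covered by $\mathcal{M}_0$ into the uncovered part of $H_1$ (and symmetrically), with the retained $\mathcal{M}_i$-copies covering the vacated extremes. The only differences are cosmetic (exact counts instead of lower bounds, and organizing the swap per member of $\mathcal{M}_0$ rather than via a global bijection), and your counting inequality does check out under $h\geq 2d|P|^{2}$.
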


\begin{proof}
	Let $H_{0},H_{1}$ be the copies of $[h]^{d}$ forming $\frac{[h]^{d}}{[h]^{d}}$ such that every element of $H_{1}$ is larger then every element of $H_{0}$. By Lemma \ref{densepacking}, for $i=0,1$, $H_{i}$ has a packing $\mathcal{P}_{i}$ such that $\mathcal{P}_{i}$ covers at least $(h-|P|)^{d}$ elements of $H_{i}$, and the maximums of the members of $\mathcal{P}_{0}$ form a $(h/|P|)\times (h-|P|)\times\dots\times(h-|P|)$ sized grid $M_{0}$, while the minimums of the members of $\mathcal{P}_{1}$ form a $(h/|P|)\times (h-|P|)\times\dots\times(h-|P|)$ sized grid $M_{1}$. Applying Lemma \ref{densepacking} again, $M_{i}$ has a packing $\mathcal{M}_{i}$ covering at least $(h/|P|-|P|)(h-2|P|)^{d-1}$ elements. 
	
	Let $A_{i}$ be the set of elements covered by the members of $\mathcal{P}_{i}$ and let $B_{i}=H_{i}\setminus A_{i}$. We have 
	$|A_{i}|\geq (h-|P|)^{d}\geq h^{d}(1-d|P|/h)$, so $|B_{i}|\leq dh^{d-1}|P|$.
	 Also, let $C_{i}$ be the set of elements covered by the members of $\mathcal{M}_{i}$, then $$|C_{i}|\geq \left(\frac{h}{|P|}-|P|\right)(h-2|P|)^{d-1}>h^{d}\frac{1-2d|P|/h}{|P|}\geq |B_{1-i}|.$$
	 
	 We will modify the members of $\mathcal{P}_{i}$ by moving their minimum or maximum elements to $H_{1-i}$, filling the set $B_{1-i}$. As $|P|$ divides $h$ and $|A_{i}|$, we have that $|P|$ divides $|B_{i}|$ as well. Let $\mathcal{M}'_{i}$ be a subfamily of $\mathcal{M}_{i}$ with exactly $|B_{1-i}|/|P|$ elements, let $C_{i}'\subset H_{i}$ be the set of elements covered by the members of  $\mathcal{M}_{i}'$ and let $\phi_{i}:C_{i}'\rightarrow B_{1-i}$ be an arbitrary bijection. 
	 
	 Let $S\in \mathcal{P}_{0}$. If the maximum $x$ of $S$ is covered by $\mathcal{M}'_{0}$, then let $S'=(S\setminus \{x\})\cup\{\phi_{0}(x)\}$. Otherwise, let $S'=S$.  Then $S'$ is also a copy of $P$ as $x\prec \phi_{0}(x)$. Set $\mathcal{P}_{0}'=\{S':S\in\mathcal{P}_{0}\}$. Then $\mathcal{P}_{0}'$ is a $P$-packing, and the set of elements covered by the members of $\mathcal{P}_{0}'$ is $(A_{0}\setminus C_{0}')\cup B_{1}$
	 
	 We define $\mathcal{P}'_{1}$ similarly, replacing maximum with minimum, which results in a $P$-packing covering ${(A_{1}\setminus C_{1}')\cup B_{0}}$. But then $$\mathcal{P}=\mathcal{P}'_{1}\cup\mathcal{P}'_{2}\cup\mathcal{M}'_{1}\cup\mathcal{M}'_{2}$$
	 is a $P$-partition of $H_{0}\cup H_{1}$.
\end{proof}

Now we can move to Step 2. Roughly, our idea to partition $[2h]^{m}$ into copies of $\frac{[h]^{d}}{[h]^{d}}$ is as follows. We partition $[2h]^{m}$ into $2^{m}$ copies of $[h]^{m}$ in the obvious way. For each copy of $[h]^{m}$, we pick $d$ coordinates and partition $[h]^{m}$ into copies of $[h]^{d}$ along these $d$ coordinates. We pick our $d$ coordinates with the help of Claim \ref{matching}, so that we can match the copies of $[h]^{d}$ to form a copy of $\frac{[h]^{d}}{[h]^{d}}$. This is done in Claim \ref{grid}. 

In the proofs, we need the following result of the author of this paper \cite{T} mentioned in the Introduction.

\begin{lemma}[\cite{T}]\label{longchain}
	Let $h$ be a power of $2$. If $n=\Omega(h^{2})$, then $2^{[n]}$ can be partitioned into chains of size $h$.
\end{lemma}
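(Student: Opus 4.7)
The plan is to leverage the symmetric chain decomposition (SCD) of de Bruijn, Tengbergen and Kruyswijk, which partitions $2^{[n]}$ into saturated chains of lengths $n+1-2i$, symmetric about the middle level. Since $h=2^{k}$ and $|2^{[n]}|=2^{n}$, the global divisibility condition $h \mid 2^{n}$ is automatic; and since $n=\Omega(h^{2})$, the vast majority of chains in the SCD have length well above $h$.

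First I would cut every symmetric chain of length $L\geq h$ greedily into $\lfloor L/h\rfloor$ consecutive sub-chains of length exactly $h$, leaving a remainder of length $L\bmod h<h$ at one end. The main parts of this step already form the bulk of the desired uniform chain partition; what remains is a collection of short fragments, together with those symmetric chains whose length is already less than $h$, all concentrated near the middle levels of the cube. Because both $|2^{[n]}|$ and the total length of the main parts are divisible by $h$, so is the total mass of the leftovers.

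The central difficulty is stitching these remainders into chains of length $h$. My preferred attempt is induction on $k$: assume the statement for $h'=h/2$, so that $2^{[n]}$ admits a partition into chains of length $h/2$, and then pair up these chains so that each pair concatenates into a chain of length $h$. Two such chains can be joined precisely when the top of one is dominated by the bottom of the other, and the quadratic slack $n=\Omega(h^{2})$ should supply enough room for a global pairing, via either a Hall-type matching argument or a direct construction using a product decomposition $2^{[n]}\cong 2^{[n']}\times 2^{[n-n']}$. A secondary approach is a \emph{borrowing} argument: use the abundance of already-built length-$h$ chains to donate elements that complete a remainder upward into a full chain of length $h$, then re-complete the donors from adjacent remainders.

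The main obstacle is verifying that such a stitching exhausts every element of the cube without creating new remainders; the fact that the quadratic bound is tight (as noted in the introduction) signals that the analysis must be sharp near the middle levels, where the remainders concentrate and where any loss in the pairing or borrowing step would cost a factor of $h$ in the required value of $n$.
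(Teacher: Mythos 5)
This lemma is not proved in the present paper at all: it is quoted from the author's earlier work \cite{T}, where it is the main result, so there is no in-paper argument to compare yours against --- the burden of your sketch is to prove the cited theorem itself. Judged on that basis, your proposal is a plan rather than a proof, and it stops exactly where the real work begins. Cutting the de Bruijn--Tengbergen--Kruyswijk symmetric chains into consecutive blocks of length $h$ is the natural first move, but almost every symmetric chain then leaves a nonzero remainder fragment, and these fragments (together with the symmetric chains of length $<h$) are all concentrated in the middle $O(h)$ levels; their total mass is of order $h\binom{n}{\lfloor n/2\rfloor}$, which is why the hypothesis $n=\Omega(h^{2})$ is needed at all. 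Two fragments coming from different symmetric chains are in general not concatenable (the top of one need not lie below the bottom of the other), and the SCD gives you no control over which fragment tops dominate which fragment bottoms, so ``stitching'' them into chains of length exactly $h$ is not a bookkeeping step --- it is the theorem.

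Your fallback, induction on $k$ via pairing length-$h/2$ chains, has the same problem in starker form: the inductive hypothesis hands you an \emph{arbitrary} partition into chains of length $h/2$, and you would need a perfect matching on its $2^{n+1}/h$ chains in the graph where two chains are adjacent when one stacks on top of the other. Nothing guarantees Hall's condition for an arbitrary such partition, and ``the quadratic slack should supply enough room'' is a restatement of what must be proved, not an argument. (Note also that the only Hall-type matching of this flavor appearing in the paper, Claim~\ref{matching}, is itself \emph{deduced from} Lemma~\ref{longchain}, so this direction of reasoning is the reverse of the one available to you.) The actual proof in \cite{T} does not stitch fragments of a single SCD: roughly, it writes $2^{[n]}$ as a product of smaller cubes, decomposes the factors into symmetric chains so as to partition $2^{[n]}$ into products of chains (two- or higher-dimensional grids), and then partitions those grids --- where comparability is explicit and the divisibility by $h=2^{k}$ can be engineered --- into chains of length $h$. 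To close the gap you would need to supply machinery of that kind, or a genuinely verified matching/absorption argument for the middle levels; as written, the central step is missing.
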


\begin{claim}\label{matching}
	Let $d$ be a positive integer. If $m\geq \Omega(d^{2})$, then there is a matching $M$ in $2^{[m]}$ such that if $\{x,y\}\in M$, then $x$ and $y$ are comparable and $|x\triangle y|\geq d$.
\end{claim}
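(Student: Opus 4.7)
The plan is to reduce the problem directly to Lemma \ref{longchain}. Let $h$ be the smallest power of $2$ with $h\geq 2d$, so that $h\leq 4d$. By Lemma \ref{longchain}, provided $m=\Omega(h^{2})=\Omega(d^{2})$, the Boolean lattice $2^{[m]}$ admits a partition $\mathcal{C}$ into chains, each of size exactly $h$.

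Given any chain $C\in\mathcal{C}$, write its elements in the unique increasing order $C=\{x_{1}\subsetneq x_{2}\subsetneq\dots\subsetneq x_{h}\}$. I would match the elements of $C$ in pairs by coupling $x_{i}$ with $x_{i+h/2}$ for every $i\in [h/2]$. Since the chain is strictly increasing, $|x_{i+h/2}|\geq |x_{i}|+h/2$, and therefore $x_{i}\subset x_{i+h/2}$ are comparable with $|x_{i}\triangle x_{i+h/2}|=|x_{i+h/2}|-|x_{i}|\geq h/2\geq d$. This gives a perfect matching on the $h$ elements of $C$ whose pairs are comparable and have symmetric difference at least $d$.

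Taking the union of these matchings over all chains $C\in\mathcal{C}$ produces a matching $M$ on the whole of $2^{[m]}$ with the required property. The constant hidden in $m=\Omega(d^{2})$ comes entirely from Lemma \ref{longchain} applied to the chain length $h\leq 4d$; no further bookkeeping is needed. There is no genuine obstacle in this argument — the only choice to make is taking $h$ a power of $2$ (forced by the hypothesis of Lemma \ref{longchain}) and at least $2d$ (so that pairing at distance $h/2$ guarantees symmetric difference $\geq d$).
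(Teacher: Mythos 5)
Your proposal is correct and is essentially identical to the paper's proof: the paper also takes the chain length to be the power of two $2^{k+1}$ with $d\leq 2^{k}<2d$ (which is exactly your $h$), partitions $2^{[m]}$ into such chains via Lemma \ref{longchain}, and matches each $c_{i}$ with $c_{i+2^{k}}$. No differences worth noting.
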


\begin{proof}
   Let $k$ be a positive integer such that $d\leq 2^{k}< 2d$. By Lemma \ref{longchain}, if $m= \Omega(d^{2})$, then $2^{[m]}$ has a partition $\mathcal{C}$ into chains of size $2^{k+1}$. For each chain $C\in \mathcal{C}$, if $c_{1}\subset \dots \subset c_{2^{k+1}}$ are the elements of $C$, then match $c_{i}$ with $c_{i+2^{k}}$ for $i\in [2^{k}]$. Clearly, $c_{i}\subset c_{i+2^{k}}$ and $|c_{i}\triangle c_{i+2^{k}}|\geq 2^{k}\geq d$.
\end{proof}

Let us remark that the bound in Claim \ref{matching} is the best possible up to the constant factor. Indeed, every element of $2^{[m]}$ lying in the $d$ middle levels is matched to the rest of the elements. Therefore, we must have that the number of elements of $2^{[m]}$ in the $d$ middle levels is at most $2^{m}/2$. But then by well known  concentration inequalities, $m=\Omega(d^{2})$.

\begin{claim}\label{grid}
	 Let $h,d,m$ be positive integers such that $m=\Omega(d^{2})$. Then $[2h]^{n}$ can be partitioned into copies of $\frac{[h]^{d}}{[h]^{d}}$.
\end{claim}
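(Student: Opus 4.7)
The plan is to partition $[2h]^m$ in three stages: split it into $2^m$ sub-cubes each isomorphic to $[h]^m$, pair up these sub-cubes using Claim \ref{matching}, and then slice each matched pair into copies of $\frac{[h]^d}{[h]^d}$.

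For each $\epsilon \in \{0,1\}^m$, set $H_\epsilon = \prod_{i=1}^m (\epsilon_i h + [h]) \subset [2h]^m$, so that $[2h]^m = \bigsqcup_{\epsilon} H_\epsilon$ and each $H_\epsilon$ is a copy of $[h]^m$. Identifying $\{0,1\}^m$ with $2^{[m]}$ in the standard way, Claim \ref{matching} produces a perfect matching $M$ on $2^{[m]}$ in which each matched pair $\{x,y\}$ satisfies (after relabeling) $x \subseteq y$ and $|y \setminus x| \geq d$. For each such pair, fix an arbitrary $d$-element subset $S \subseteq y \setminus x$ of coordinates. I then slice both $H_x$ and $H_y$ into copies of $[h]^d$ by fixing the coordinates outside $S$: for each $v \in [h]^{[m]\setminus S}$, let $H_x^v$ be the set of $z \in H_x$ with $z_i = x_i h + v_i$ for all $i \in [m]\setminus S$, and define $H_y^v$ analogously with $x$ replaced by $y$. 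Each $H_x^v$ and each $H_y^v$ is a copy of $[h]^d$, with the coordinates in $S$ ranging freely.

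The heart of the argument is to check that $H_x^v \cup H_y^v$ is a copy of $\frac{[h]^d}{[h]^d}$ for every $v$, namely that every element of $H_y^v$ strictly dominates every element of $H_x^v$ in $[2h]^m$. For $i \in S$ we have $x_i = 0$ and $y_i = 1$, so the $i$-th coordinate of every element of $H_x^v$ lies in $[1,h]$ while that of every element of $H_y^v$ lies in $[h+1, 2h]$, giving a strict inequality. For $i \in [m]\setminus S$ we have $x_i \leq y_i$ because $x \subseteq y$, so the fixed values satisfy $x_i h + v_i \leq y_i h + v_i$. Coordinatewise domination holds everywhere and is strict on $S$, which is precisely the condition for $H_x^v \cup H_y^v$ to realize $\frac{[h]^d}{[h]^d}$ with $H_x^v$ on the bottom.

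Letting $v$ range over $[h]^{[m]\setminus S}$ and $\{x,y\}$ range over $M$ produces the desired partition; the cardinality identity $|M| \cdot h^{m-d} \cdot 2 h^d = 2^m h^m = (2h)^m$ confirms there are no leftover elements. The only real obstacle is producing a matching whose pairs are simultaneously comparable and far apart in Hamming distance, but this is precisely the content of Claim \ref{matching} and is where the hypothesis $m = \Omega(d^2)$ enters.
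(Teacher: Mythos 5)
Your proof is correct and follows essentially the same route as the paper: partition $[2h]^m$ into $2^m$ translates of $[h]^m$, pair them via the comparable far-apart matching of Claim \ref{matching}, and slice each matched pair along $d$ coordinates where the pair differs so that the bottom slice lands in $[1,h]$ and the top in $[h+1,2h]$ there. The coordinatewise domination check and the final counting are exactly what is needed.
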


\begin{proof}
	Let $M$ be a matching of $[2]^{m}$ such that if $\{x,y\}\in M$, then $x$ and $y$ are comparable and $|x-y|\geq d$. (Such a matching exists by Claim \ref{matching}.) For each $x=(x_{1},\dots,x_{n})\in [2]^{n}$, let $$H_{x}=\{(a_{1},\dots,a_{n}):\forall i\in [n],(x_{i}-1)h<a_{i}\leq x_{i} h\}.$$
	Then $\{H_{x}\}_{x\in [2]^{n}}$ is partition of $[2h]^{n}$ into grids isomorphic to $[h]^{n}$.
	Suppose that $x=(x_{1},\dots,y_{n})\in [2]^{n}$ is matched with $y=(y_{1},\dots,y_{n})$ in $M$. We show that $H_{x}\cup H_{y}$ can be partitioned into copies of  $\frac{[h]^{d}}{[h]^{d}}$. Clearly, if this is true for every pair in $M$, we are done.
	
	 Without loss of generality, suppose that $x\prec y$ and that $x$ and $y$ differ in the last $d$ coordinates. For $z=(z_{1},\dots,z_{n-d})\in [h]^{n-d}$, let 
	 $$H_{x}(z)=\{(z_{1}+(x_{1}-1)h,\dots,z_{n-d}+(x_{n-d}-1)h,a_{n-d+1},\dots,a_{n}):a_{n-d+1},\dots,a_{n}\in [h]^{d}\},$$
	 and 
	 $$H_{y}(z)=\{(z_{1}+(y_{1}-1)h,\dots,z_{n-d}+(y_{n-d}-1)h,a_{n-d+1},\dots,a_{n}):a_{n-d+1},\dots,a_{n}\in [h+1,2h]^{d}\}.$$
	 Then the sets $H_{x}(z)$ and $H_{y}(z)$ are isomorphic to the grid $[h]^{d}$ and form a partition of $H_{x}$ and $H_{y}$, respectively. Moreover, every element of $H_{x}(z)$ is $\prec$-less than every element of $H_{y}(z)$. But then $H_{x}(z)\cup H_{y}(z)$ is a copy of  $\frac{[h]^{d}}{[h]^{d}}$.
\end{proof}

We conclude this section with Step 3, and the proof of Theorem \ref{mainthm1}.

\begin{proof}[Proof of Theorem \ref{mainthm1}]
Let $h$ be a power of $2$ such that  $2d|P|^{2}\leq h<4d|P|^{2}$. Let $s$ be the smallest number for which $2^{[s]}$ can be partitioned into chains of size $2h$, and let $\mathcal{C}$ be such a chain partition. By Lemma \ref{longchain},  $s=\Omega(h^{2})$. Let $m$ be the smallest positive integer for which $[2h]^{m}$ has a  $\frac{[h]^{d}}{[h]^{d}}$-partition, and therefore a $P$-partition by Claim \ref{pair}. Then $m=O(d^{2})$ by Claim \ref{grid}.

Set $n=ms$. Then $2^{[n]}$ is isomorphic to the cartesian power $(2^{[s]})^m$. But $\mathcal{C}$ generates a partition of $(2^{[s]})^m$ into the cartesian products $C_{1}\times...\times C_{m}$, where $C_{1},\dots,C_{m}\in\mathcal{C}$. Moreover, $C_{1}\times...\times C_{m}$ is isomorphic to $[2h]^{l}$, which means that it has a $P$-partition. The union of these $P$-partitions is a $P$-partition of $2^{[n]}$.

 Also, we have $n=sm=O(h^{2}d^{2})=O(|P|^{4}d^{4})$. We finish the proof by noting that if $2^{[n]}$ has a $P$-partition, then so does $2^{[n']}$ for $n'>n$.
\end{proof}

\section{General posets}\label{sect:thm2}

In this section, we prove Theorem \ref{mainthm2}. Denote the truncated Boolean lattice $2^{[n]}-\{\emptyset,[n]\}$ by $T(n)$.

\subsection{Overview of the proof}

Let us take a quick look back at the proof of Theorem \ref{mainthm0}. If $P$ has a unique minimum and maximum and size $2^{k}$, then we were able to find a nice poset $Q$, namely $\frac{[h]^{m}}{[h]^{m}}$, which has the property (if the parameters are set correctly) that $Q$ can be "easily" partitioned into copies of $P$, and $2^{[n]}$ can be "easily" partitioned into copies of $Q$.

Now let us suppose that $P$ is an arbitrary poset. We would like to follow a similar train of thoughts as above. Unfortunately, $\frac{[h]^{m}}{[h]^{m}}$ does not have advantageous properties regarding $P$-partitions anymore. Instead, we define a slightly more complicated poset $A$, which we call \emph{absorber}, with the property that for any $R\subset A$, where $|R|$ is not too large, $A-R$ has a $P$-partition provided $|P|$ divides $|A-R|$. This can be found in Section \ref{sect:absorber}. We use the name absorber, as these families have similar properties as the graph absorbers used in graph partitioning problems. See the seminal paper of Erd\H{o}s, Gy\'{a}rf\'{a}s and Pyber \cite{EGP} for one of the first applications of the so called absorption method (note that, however, the terms absorber/absorption were coined later).

The next natural idea would be to construct an $A$-packing of $2^{[n]}$ that covers all but a small number of elements. However, we are unable to do this. Instead, we construct a dense $A$-packing of $2^{[n]}$ such that every uncovered element $x$, which is not too close to $\emptyset$ or $[n]$ in some sense, can be matched to one of the absorbers $A$ so that $A\cup\{x\}$ contains a copy of $P$ covering $x$. Also, we deal with the elements that are too close to $\emptyset$ or $[n]$ separately. This can be found in Section \ref{sect:denseabsorber}.

At this point, we managed to cover every element of $T(n)$ not contained in our collection of absorbers with copies of $P$. One would like to argue that then we are done as we can find a $P$-packing in each of the absorbers that covers the previously not covered elements. However, this is only true if the number of not covered elements in each of the absorbers is divisible by $|P|$. To overcome this problem, we add some extra dimensions and consider $2^{[n'+n]}\cong 2^{[n']}\times 2^{[n]}$, where we drop a few copies of $P$ that correct the divisibility issues. This is prepared in Section \ref{sect:modp}.

We are almost done, the only problem is when we added those extra dimensions, we did not cover the elements $T(n')\times \{\emptyset,[n]\}$. We construct a $P$-packing which deals with this problem in Section \ref{sect:correction}. Finally, in Section \ref{sect:final}, we put all of our $P$-packings together to form the desired $P$-packing of $T([n'+n])$.

\bigskip

The main ideas of the proof are contained in Claim \ref{absorber}, Claim \ref{absorber_packing} and Claim \ref{absorber_matching}, while the other parts of the proof are tying up loose ends (of which there happens to be a lot of), and might be quite technical in nature. Therefore, we advise the interested reader to put more emphasis on understanding the aforementioned claims, and skip the other parts of the proof at first reading.

\subsection{Special elements}

  Let $Q$ be a copy of $P$ in $2^{[n]}$. We say that $f\in [n]$ is \emph{special for} $x\in Q$ if either 

(1) $x$ is minimal in $Q$, $f\in x$, and every $y\in Q$ containing $i$ satisfies $x\subset y$, or

(2) $x$ is maximal in $Q$, $f\not\in x$, and every $y\in Q$ not containing $i$ satisfies $y\subset x$.\\
 Special elements are going to be used to modify certain copies of $P$ by moving one of their minimal or maximal elements. Therefore, minimal (or maximal) elements of copies of $P$ with some special base element will play a similar role as the minimal (or maximal) elements in the proof of Theorem \ref{mainthm0}. To this purpose, we shall exploit the following properties of special elements.

\begin{claim}\label{special}
	 Let $Q\subset 2^{[n]}$ be a copy of $P$.
	
	(1) If $x\in Q$ is minimal and $f\in [n]$ is special for $x$, then for every $x'\in 2^{[n]}$ satisfying $f\in x'\subset x$, the family $(Q-x)\cup\{x'\}$ is also a copy of $P$.
	
	(2)  If $y\in Q$ is maximal and $f\in [n]$ is special for $y$, then for every $y'\in 2^{[n]}$ satisfying $y\subset y'$ and $f\not\in y'$, the family $(Q-y)\cup\{y'\}$ is a copy of $P$.
\end{claim}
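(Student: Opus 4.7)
My plan is to verify each part by exhibiting an explicit poset isomorphism between $Q$ and the proposed new family. For part~(1), define $\phi:Q\to (Q-x)\cup\{x'\}$ by $\phi(x)=x'$ and $\phi(z)=z$ for $z\in Q-x$; for part~(2) define the analogous map sending $y$ to $y'$. Since $Q$ is already a copy of $P$, it will suffice to show that $\phi$ is a bijection onto its stated image and that $u\subseteq v \iff \phi(u)\subseteq\phi(v)$ for all $u,v\in Q$.

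In part~(1), the injectivity of $\phi$ (equivalently, $x'\notin Q-x$) is an immediate consequence of the minimality of $x$ in $Q$: any $z\in Q-x$ with $z=x'\subseteq x$ would strictly precede $x$, contradicting minimality. The order-preservation check then reduces to pairs involving $x$. For any $z\in Q-x$, both $z\subseteq x$ and $z\subseteq x'\subseteq x$ are ruled out by the same minimality argument, so the ``downward'' comparabilities are vacuously preserved. In the ``upward'' direction, $x\subseteq z \Rightarrow x'\subseteq z$ is immediate from $x'\subseteq x$. The converse $x'\subseteq z \Rightarrow x\subseteq z$ is the crux of the claim, and it is precisely what the definition of specialness is designed to supply: since $f\in x'\subseteq z$, we have $f\in z$, and then the special condition forces $x\subseteq z$.

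Part~(2) follows by the exact dual argument---replace ``minimal'' by ``maximal'', reverse the inclusions, and swap ``contains $f$'' with ``does not contain $f$''---or, more conceptually, by applying part~(1) to the image of $Q$ under complementation in $[n]$, which reverses the Boolean lattice order and swaps the two cases in the definition of a special element. I do not foresee any genuine obstacle here; the claim is essentially an unpacking of the definition of special element, custom-tailored so that a single-element swap preserves all comparabilities, and the only point requiring care is confirming that the specialness hypothesis supplies exactly the nontrivial converse implication highlighted above.
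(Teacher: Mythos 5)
Your proposal is correct and follows essentially the same route as the paper: one checks that $x$ and $x'$ are comparable to exactly the same elements of $Q-x$, with the specialness of $f$ supplying the implication $x'\subseteq z \Rightarrow x\subseteq z$ (the paper phrases it contrapositively) and minimality handling the downward direction. No issues.
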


\begin{proof}
	(1) Let $y\in Q-x$. If $x\subset y$, then $x'\subset y$ as $x'\subset x$. Also, if $x\not\subset y$, then $f\not\in y$ as $i$ is special for $x$. But $f\in x'$, so $x'\not\subset y$ as well. This means that $x$ and $x'$ are comparable to the same set of elements in $Q$, so the posets $Q$ and $(Q-x)\cup \{x'\}$ are isomorphic.
	
	(2) The proof is similar to the proof of (1).
\end{proof}

\begin{claim}\label{embedding}
	Let $n\geq |P|$ and $i\in [n]$. There exists a copy $Q$ of $P$ in $2^{[n]}$ such that $\{f\}\in Q$ is minimal in $Q$. Also, there exists a copy $Q'$ of $P$ in $2^{[n]}$ such that $[n]-f\in Q'$ is maximal. In particular, $f$ is special for $\{f\}$ in $Q$, and $f$ is special for $[n]-f$ in $Q'$.
\end{claim}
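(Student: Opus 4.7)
The plan is to construct $Q$ and $Q'$ explicitly via the two standard dual order-embeddings of a poset into the Boolean lattice (downsets for $Q$, complements of upsets for $Q'$), relabeling the elements of $P$ by an injection into $[n]$ chosen so that a prescribed minimal (respectively maximal) element of $P$ lands on $\{f\}$ (respectively $[n]-f$). Since $n \geq |P|$, such an injection exists, and everything else is essentially bookkeeping. I expect no real obstacle: once the embeddings are written down, the ``special'' conditions will turn out to be automatic from the shapes of $\{f\}$ and $[n]-f$.

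For $Q$: fix any minimal element $p_0 \in P$ and an injection $\iota : P \hookrightarrow [n]$ with $\iota(p_0)=f$. Define $\phi(p)=\iota(\{q\in P : q\leq_P p\})$ and set $Q=\{\phi(p):p\in P\}$. The map $p \mapsto \{q:q\leq_P p\}$ is the usual order-embedding $P\hookrightarrow 2^P$ (transitivity gives $\subseteq$, while $p \in \{q:q\leq_P p\}$ gives the converse), and composing with the injective $\iota$ preserves this, so $Q$ is a copy of $P$ in $2^{[n]}$. Minimality of $p_0$ gives $\phi(p_0)=\iota(\{p_0\})=\{f\}$, which is therefore minimal in $Q$. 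The special-ness condition is automatic: any $y\in Q\subseteq 2^{[n]}$ with $f\in y$ trivially contains $\{f\}$.

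For $Q'$: fix any maximal element $p_1 \in P$ and an injection $\iota : P\hookrightarrow [n]$ with $\iota(p_1)=f$. Define $\psi(p)=\bigl([n]\setminus\iota(P)\bigr)\cup \iota(\{q\in P:q\not\geq_P p\})$ and set $Q'=\{\psi(p):p\in P\}$. A short check shows $\psi(p)\subseteq \psi(q)$ iff the upsets satisfy $\{r:r\geq_P p\}\supseteq\{r:r\geq_P q\}$, which is equivalent to $p\leq_P q$ (one direction by transitivity, the other by plugging in $r=q$); this also forces injectivity, so $Q'$ is a copy of $P$. Because $p_1$ is maximal, $\{q:q\not\geq_P p_1\}=P-p_1$, hence $\psi(p_1)=\bigl([n]\setminus\iota(P)\bigr)\cup\bigl(\iota(P)\setminus\{f\}\bigr)=[n]-f$, which is maximal in $Q'$. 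Once again, the special-ness of $f$ for $[n]-f$ is automatic: any $y\in Q'\subseteq 2^{[n]}$ with $f\notin y$ automatically satisfies $y\subset [n]-f$.

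In summary, the proof amounts to writing down these two dual embeddings and observing that inclusion in $2^{[n]}$ forces the special condition for free whenever one endpoint is the smallest set containing $f$ or the largest set avoiding $f$. No estimate or iterative construction is needed; the only hypothesis used is $n\geq|P|$, to guarantee the existence of the labeling injections $\iota$.
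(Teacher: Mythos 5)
Your construction of $Q$ via the downset embedding $p\mapsto\iota(\{q:q\leq_P p\})$ is exactly the paper's proof, and your explicit dual construction of $Q'$ via complements of upsets is precisely what the paper leaves implicit with ``can be proved similarly.'' The proposal is correct and takes essentially the same approach.
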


\begin{proof}
	We prove the existence of such $Q$, the existence of $Q'$ can be proved similarly. Let $\pi:P\rightarrow [n]$ be an injection which satisfies that $\pi(p)=f$ for some minimal element $p\in P$. Also, define $\phi:P\rightarrow 2^{[n]}$ such that $\phi(p)=\{\pi(q):q\leq_{P}p\}$. Then $Q=\phi(P)$ is a copy of $P$ in $2^{[n]}$ in which $\phi(p)=\{f\}$ is minimal.
\end{proof}

We shall also use the following immediate corollary of Claim \ref{embedding}.

\begin{claim}\label{manyembedding}
 Let $n\geq |P|$ and $f\in [n]$. There exist at least $2^{n-|P|}$ disjoint copies $Q$ of $P$ in $2^{[n]}$ for which $f$ is special for some minimal element of $Q$. Also, there exist at least $2^{n-|P|}$ disjoint copies $Q'$ of $P$ in $2^{[n]}$ for which $f$ is special for some maximal element of $Q'$.	
\end{claim}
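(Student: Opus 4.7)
The plan is to bootstrap Claim \ref{embedding} from one copy to many via a simple product/shift construction. First I would fix a subset $S\subseteq [n]$ with $|S|=|P|$ and $f\in S$. Applying Claim \ref{embedding} with the ground set $[n]$ replaced by $S$ yields a copy $Q_{0}\subseteq 2^{S}$ of $P$ in which $\{f\}$ is a minimal element and $f$ is special for $\{f\}$; dually, the same claim yields a copy $Q'_{0}\subseteq 2^{S}$ in which $S-f$ is a maximal element and $f$ is special for $S-f$.

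For each $T\subseteq [n]-S$, I would define the shifted family
$$Q_{T}=\{A\cup T:A\in Q_{0}\}\subseteq 2^{[n]}.$$
Since every $A\in Q_{0}$ is a subset of $S$ and is therefore disjoint from $T$, the map $A\mapsto A\cup T$ is an order-preserving bijection from $Q_{0}$ onto $Q_{T}$, so $Q_{T}$ is a copy of $P$ having $\{f\}\cup T$ as one of its minimal elements. To check that $f$ is special for $\{f\}\cup T$ in $Q_{T}$: if $y=A\cup T\in Q_{T}$ contains $f$, then $f\in A$ (as $f\notin T$), so the speciality of $f$ in $Q_{0}$ forces $\{f\}\subseteq A$, and hence $\{f\}\cup T\subseteq A\cup T=y$. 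Disjointness is immediate, since every $y\in Q_{T}$ satisfies $y\cap([n]-S)=T$, so $T$ is recoverable from any single member of $Q_{T}$, and distinct values of $T$ give disjoint families. As there are $2^{n-|P|}$ such $T$, this produces the desired $2^{n-|P|}$ pairwise disjoint copies.

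The maximal statement would follow identically, starting from $Q'_{0}$ in place of $Q_{0}$: the shifted family $Q'_{T}=\{A\cup T:A\in Q'_{0}\}$ has $(S-f)\cup T$ as a maximal element, which does not contain $f$ (because $f\notin S-f$ and $f\notin T$), and the verification that $f$ is special for $(S-f)\cup T$ in $Q'_{T}$ reduces at once to the speciality of $f$ for $S-f$ in $Q'_{0}$. The only point requiring care is that $f\in S$ is disjoint from the shift $T\subseteq [n]-S$, which is precisely what ensures that translation by $T$ does not alter how members of $Q_{T}$ and $Q'_{T}$ interact with $f$; beyond this bookkeeping I do not anticipate any serious obstacle.
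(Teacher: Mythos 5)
Your proposal is correct and is essentially identical to the paper's proof: the paper also takes a single copy of $P$ in a $|P|$-element subcube containing $f$ (via Claim \ref{embedding}) and translates it by each subset of the complementary $n-|P|$ base elements to obtain $2^{n-|P|}$ disjoint copies. Your verification that speciality of $f$ is preserved under the shift is slightly more explicit than the paper's, but the construction is the same.
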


\begin{proof}
	Again, we prove only the first claim, the second claim can be proved in a similar manner. Without loss of generality, suppose that $f=1$. By Claim \ref{embedding}, there exists a copy $Q$ of $P$ in $2^{[|P|]}$ in which $\{1\}$ is a minimal element. For each $z\subset [|P|+1,n]$, let $Q_{z}=\{p\cup z:p\in Q\}$. Then $\{Q_{z}:z\subset [|P|+1,n]\}$ is a collection of $2^{n-|P|}$ disjoint copies of $P$. Also, $\{1\}\cup z$ is a minimal element of $Q_{z}$ for which $1$ is special. 
\end{proof}

\subsection{Absorbers and their properties}\label{sect:absorber}


 A \emph{$d$-dimensional absorber} in $2^{[n]}$ (or just simply an \emph{absorber}, if $d$ is clear from the context) is the union of four copies of $2^{[d]}$ positioned in a particular way. To this end, we need to introduce a couple of parameters. Let $\alpha_{1},\alpha_{2},\alpha_{3},\alpha_{4}$ be four disjoint $d$-element subsets of $[n]$ and let ${\beta=[n]-\bigcup_{j=1}^{4}\alpha_{j}}$. Also, let $f_{j}\in \alpha_{j}$ for $j=1,\dots,4$, and let $\gamma\subset \beta$. Let 
$$\lambda_{1}=\alpha_{2}\cup\alpha_{3}\cup(\alpha_{4}-f_{4})\cup\gamma,$$
$$\lambda_{2}=\{f_{1}\}\cup \gamma,$$
$$\lambda_{3}=\alpha_{1}\cup (\alpha_{2}-f_{2})\cup\alpha_{4}\cup\gamma,$$
$$\lambda_{4}=\{f_{3}\}\cup\gamma,$$
and define the four $d$-dimensional subcubes $S_{j}=\{\lambda_{j}\cup x:x\subset \alpha_{j}\}$, $j\in [4]$.
The disjoint union $A=\bigcup_{j=1}^{4}S_{j}$ is a $d$-dimensional absorber. If $A$ is an absorber, we denote the corresponding parameters $\alpha_{j},\lambda_{j},f_{j},S_{j},\beta,\gamma$ by $\alpha^{A}_{j},\lambda_{j}^{A},f^{A}_{j},S^{A}_{j},\beta^{A},\gamma^{A}$ for $j=1,\dots,4$, respectively.

The sets $S_{1},...,S_{4}$ are designed to satisfy the following property. For every $x\in S_{2j-1}$, $x$ is $\subset$-larger than every element of $S_{2j}$ if $f_{2j-1}\in x$, otherwise, $x$ is incomparable to every element of $S_{2j}$  (indices are meant modulo $4$). Similarly, for every $x\in S_{2j}$, $x$ is $\subset$-smaller than every element of $S_{2j+1}$ if $f_{2j}\not\in x$, otherwise, $x$ is incomparable to every element of $S_{2j+1}$. This property is useful for the following reason. If $Q$ is a copy of $P$ in $S_{2j-1}$ such that the minimal element $x$ of $Q$ contains $f_{2j-1}$ as a special element, then for every $x'\in S_{2j}$, $(Q-x)\cup\{x'\}$ is also a copy of $P$. Roughly saying, this means that we can use minimal elements of copies of $P$ in $S_{2j-1}$ with special element $f_{2j-1}$ to fill holes in $S_{2j}$. Similarly, we can use maximal elements of copies of $P$  in $S_{2j}$ with special element $f_{2j}$ to fill arbitrary holes in $S_{2j+1}$. 

In our next claim, we show that $A$ has good absorption properties. That is, we prove that if $A$ is an absorber and $R\subset A$ is a small family, then $A-R$ can be almost partitioned into copies of $P$. In the proof of this claim, we use the following theorem of Methuku and P\'{a}lv\"{o}lgyi \cite{MP}. 

\begin{theorem}[\cite{MP}]\label{La}
Let $P$ be a poset. Then there exists a constant $C_{P}$ such that for every positive integer $n$, if the family $\mathcal{F}\subset 2^{[n]}$	satisfies $|\mathcal{F}|\geq C_{P}2^{n}/\sqrt{n}$, then $\mathcal{F}$ contains a copy of $P$.
\end{theorem}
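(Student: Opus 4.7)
The plan is to use the random maximal chain (Lubell/LYM) method as the main tool, combined with an embedding argument that reduces the problem of finding a copy of $P$ to finding a grid-like sub-structure in $\mathcal{F}$.

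First, I would consider a uniformly random maximal chain $\mathbf{C}$ in $2^{[n]}$, realized by a random permutation $\sigma$ of $[n]$ via $C_i=\{\sigma(1),\ldots,\sigma(i)\}$. For each $F\in\mathcal{F}$ we have $\Pr[F\in\mathbf{C}]=\binom{n}{|F|}^{-1}$, hence by the trivial lower bound $\binom{n}{|F|}^{-1}\geq\binom{n}{\lfloor n/2\rfloor}^{-1}=\Theta(\sqrt{n}/2^n)$ and linearity of expectation,
$$\mathbb{E}\bigl[\,|\mathcal{F}\cap\mathbf{C}|\,\bigr]\;\geq\; c_0\,\frac{|\mathcal{F}|\sqrt{n}}{2^n}\;\geq\; c_0\,C_P.$$
Thus for $C_P$ large enough there is a single maximal chain containing arbitrarily many elements of $\mathcal{F}$. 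This already settles the case when $P$ is a chain, and serves as the base of the general argument.

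Second, to upgrade ``many elements on a chain'' to ``contains an induced copy of $P$,'' I would use the Dushnik--Miller dimension $d=\dim(P)\leq |P|/2$. Since $P$ embeds in a grid $[k]^d$ with $k=O(|P|)$, it is enough to locate a copy of $[k]^d$ inside $\mathcal{F}$. I would replace $\mathbf{C}$ by a random ``product chain'': partition $[n]$ into $d$ roughly equal random blocks and pick a uniform maximal chain inside each, combined with a uniform random subset of the base elements lying outside. A LYM-type calculation shows that the expected intersection of $\mathcal{F}$ with such a random grid is still $\Omega(C_P)$, so this constant-size grid is hit by $\mathcal{F}$ with density bounded away from zero, and a pigeonhole argument inside $[k]^d$ produces a copy of $P$ (as a weakly embedded sub-poset).

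The principal obstacle is that copies in the paper are \emph{induced}, so spurious inclusions among the chosen elements of $\mathcal{F}$ between different coordinates may destroy the non-comparabilities required by $P$. I would address this by a probabilistic deletion / alteration argument: bound the number of ``bad'' $|P|$-tuples (those producing an unwanted comparability) by a union bound over the finitely many potential violations inside the constant-size grid, remove a small fraction of $\mathcal{F}$ to kill them, and absorb the loss into the constant $C_P$. Fine-tuning this alteration step, and calibrating how large $C_P$ must be relative to $|P|$ and $d$, is where the bulk of the technical work should lie.
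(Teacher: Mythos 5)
First, a point of order: the paper does not prove this statement at all --- it is imported verbatim from Methuku and P\'{a}lv\"{o}lgyi \cite{MP} and used as a black box, so there is no internal proof to compare against. Judged on its own terms, your sketch has a genuine gap at its central step. The claim that ``the expected intersection of $\mathcal{F}$ with such a random grid is still $\Omega(C_P)$, so this constant-size grid is hit by $\mathcal{F}$ with density bounded away from zero'' is false. For a grid built from $d$ blocks of constant size $k$ (with the remaining $n-O(dk)$ base elements fixed to a random subset $z$), a fixed $F$ lands in the grid with probability $\Theta_{k,d}(2^{-n})$: you pay $2^{-(n-O(dk))}$ just for the event $F\setminus\bigcup B_i=z$, and the LYM factor $\binom{k-1}{|F\cap B_i|}^{-1}$ only contributes a constant per block. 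Hence $\EV[|\mathcal{F}\cap G|]=\Theta_{k,d}(|\mathcal{F}|2^{-n})=\Theta(C_P/\sqrt{n})\to 0$; with high probability the constant-size grid misses $\mathcal{F}$ entirely. The $\sqrt{n}$ savings that makes your first (chain) computation work is only available when each chain has length $\Theta(n)$; but then the grid has $\Theta((n/d)^d)$ cells and carries only $\Theta(C_P\, n^{(d-1)/2})$ points of $\mathcal{F}$ in expectation, a density of order $n^{-(d+1)/2}$, so no pigeonhole or density argument inside the grid can get off the ground for $d\geq 2$.

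The second, related gap is that even granting many points of $\mathcal{F}$ inside a large grid $[m]^d$, passing to an induced copy of $P$ is not a ``pigeonhole plus deletion'' matter: this is precisely the hard content of the theorem, and in \cite{MP} it is supplied by the Klazar--Marcus forbidden-pattern theorem for $d$-dimensional zero-one hypermatrices (whence the title of that paper), which says that a hypermatrix of side $m$ avoiding a fixed permutation-type pattern has $O_P(m^{d-1})$ ones. Your alteration step at the end removes ``bad tuples'' but does not produce any good tuple, and the union bound over a constant-size grid has nothing to act on once the expected intersection is $o(1)$. So the case $d=1$ of your argument is fine, but for $d\geq 2$ the proposal is missing the essential mechanism and the quantitative accounting does not close.
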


We do not use the full strength of this theorem, the only consequence of the theorem we need is that any $P$-packing of $A-R$ can be greedily extended to a $P$-packing that covers $(1-o(1))$ proportion of $A-R$.

\begin{claim}\label{absorber}
	Let $r$ and $d$ be positive integers such that $d\geq 4C_{P}^{2}2^{2|P|+r} $, and let $A$ be a $d$-dimensional absorber. Let $R\subset A$ such that $|R|\leq r$. Then $A-R$ has a $P$-packing that covers all but at most $|P|-1$ elements. In particular, if $|P|$ divides $|A-R|$, then $A-R$ has a $P$-partition.
\end{claim}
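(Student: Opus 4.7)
Plan. My plan is to build the required $P$-packing in three stages, combining dense greedy packing (via Theorem \ref{La}) with the special-element cross-shift mechanism of Claim \ref{special}, exploiting the cyclic structure $S_1 \to S_2 \to S_3 \to S_4 \to S_1$ that the absorber was constructed to have.

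First, for each $j \in [4]$, I would apply Claim \ref{manyembedding} inside $S_j \cong 2^{[d]}$ to reserve a collection $\mathcal{A}_j$ of pairwise disjoint copies of $P$: for $j$ odd the copies in $\mathcal{A}_j$ are required to have a minimum for which $f_j$ is special, and for $j$ even they must have a maximum for which $f_j$ is special. Claim \ref{manyembedding} supplies at least $2^{d-|P|}$ such copies, and the hypothesis $d \geq 4 C_P^2 2^{2|P|+r}$ makes $|\mathcal{A}_j|$ much larger than both $r$ and the greedy threshold $C_P 2^d/\sqrt{d}$; the handful of elements in $R$ can easily be avoided. Then, inside each $S_j$, I use the greedy-extension corollary of Theorem \ref{La} to pack $(S_j - R) \setminus \bigcup \mathcal{A}_j$ with disjoint within-copies of $P$, leaving an uncovered residue $B_j \subset S_j$ of size at most $C_P 2^d/\sqrt{d}$.

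The third and decisive step exploits Claim \ref{special}: a reserved copy $Q \in \mathcal{A}_j$ with special minimum or maximum $y$ can be converted into a cross-copy $(Q - y) \cup \{x\}$ for any $x \in S_{j+1}$, which remains a copy of $P$ thanks to the comparability structure built into the absorber. I chain such cross-shifts cyclically around $S_1 \to S_2 \to S_3 \to S_4 \to S_1$, directing each target so that residual elements in $B_{j+1}$ are absorbed while the newly uncovered minima/maxima are themselves picked up by subsequent shifts in the chain. Interleaving these cyclic shift-chains with further applications of Theorem \ref{La}---invoked whenever the accumulated residue in some $S_j$ again surpasses the threshold $C_P 2^d/\sqrt{d}$---drives the total leftover down toward its arithmetic floor $|A-R| \bmod |P|$, which is at most $|P|-1$. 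Any reserved copies that remain unused at the end are simply added back to the packing as ordinary within-copies.

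The main obstacle will be the bookkeeping in this last stage: a single cross-shift only relocates an uncovered element from $S_{j+1}$ to $S_j$ rather than eliminating it, so the shift-chains around the 4-cycle must be carefully coupled with fresh extractions via Theorem \ref{La} to make the total leftover actually shrink. The large reservoir $|\mathcal{A}_j| \gg C_P 2^d/\sqrt{d}$ secured by the dimension hypothesis is precisely what makes this iteration sustainable, and allows us to keep invoking Theorem \ref{La} on the growing residue until only the unavoidable divisibility residue of at most $|P|-1$ elements remains.
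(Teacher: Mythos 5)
Your first two stages match the paper's proof exactly (reserve, via Claim \ref{manyembedding}, disjoint copies of $P$ in $S_j$ whose special minimum/maximum has special base element $f_j$, avoid $R$, then greedily extend with Theorem \ref{La} to leave a residue $T_j$ of size at most $C_P2^d/\sqrt{d}$ in each cube). But your third stage has a genuine gap, and you have in fact put your finger on it yourself: a cross-shift $(Q-y)\cup\{x\}$ fills a hole $x\in T_{j+1}$ only at the cost of creating a new hole at the vacated element $y\in S_j$, and your proposed remedy --- iterating shift-chains around the $4$-cycle interleaved with fresh applications of Theorem \ref{La} --- cannot drive the leftover down to $|P|-1$. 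Theorem \ref{La} has an intrinsic floor: it only guarantees a copy of $P$ in a family of size at least $C_P2^d/\sqrt{d}$, so every greedy extraction stalls with a residue that may still have size $\Theta(2^d/\sqrt{d})\gg|P|$. Since the shifts merely relocate holes and each extraction round leaves $\Theta(2^d/\sqrt{d})$ of them behind, the process converges to a leftover of order $2^d/\sqrt{d}$, not to the arithmetic floor.

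The missing idea is a \emph{second-level packing}. Let $M_j$ be the family of special minimal (or maximal) elements of the reserved copies in $S_j$; since $|M_j|\geq 2^{d-|P|}-|R|-|P|$ and $d\geq 4C_P^2 2^{2|P|+r}$, the family $M_j$ itself is large enough that Theorem \ref{La} yields a $P$-packing $\mathcal{Q}_j$ covering all but $C_P2^d/\sqrt{d}$ of $M_j$. One then chooses the set $M_j'$ of elements to vacate to be exactly the union of $\lfloor|T_{j+1}|/|P|\rfloor$ members of $\mathcal{Q}_j$, shifts them onto $T_{j+1}$ via an injection, and adds those members of $\mathcal{Q}_j$ to the final packing: the vacated elements are thereby covered immediately, and no new holes are created at all. (One also needs a small preliminary adjustment, moving at most $|P|-1$ special elements from $S_1$ to $S_2$, from $S_2$ to $S_3$, and from $S_3$ to $S_4$, so that three of the four residues $T_{j+1}$ have size divisible by $|P|$ and the injections there are bijections; only one cube then retains the final residue of at most $|P|-1$ elements.) Without this two-level structure your argument does not close.
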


\begin{proof}
For simplicity, let $\alpha_{j},f_{j},\dots$ denote the parameters $\alpha^{A}_{j},f^{A}_{j},\dots$. First, we shall find a dense $P$-packing in $S_{j}$ in which $f_{j}$ is a special element for every copy of $P$. Suppose that $j$ is odd, the other case can be handled similarly. 
By Claim \ref{manyembedding}, $S_{j}$ contains $2^{d-|P|}$ disjoint copies $Q$ of $P$ for which $f_{j}$ is special for some minimal element of $Q$. Among these copies, let $\mathcal{P}_{j}$ be the collection of those that are disjoint from $R$. As at most $|R|$ of them can intersect $R$, we have $|\mathcal{P}_{j}|\geq 2^{d-|P|}-|R|$.

  We slightly modify the $P$-packings $\mathcal{P}_{1},\mathcal{P}_{2},\mathcal{P}_{3}$ to ensure that the number of uncovered elements of $S_{j}$ is divisible by $|P|$ for $j=1,2,3$. We do this in the following way. First, we replace at most $|P|-1$ elements $Q\in \mathcal{P}_{1}$ with $Q\setminus \{x\}\cup \{y\}$, where $x$ is the minimal element of $Q$ for which $f_{1}$ is special, and $y\in S_{2}\setminus R$ is any uncovered point. Then, we replace at most $|P|-1$ elements $Q\in \mathcal{P}_{2}$ with $Q\setminus \{x\}\cup \{y\}$, where $x$ is a maximal element of $Q$ with special element $f_{2}$, and $y\in S_{3}\setminus R$ is any uncovered element. Finally, we replace at most $|P|-1$ elements $Q\in \mathcal{P}_{3}$ with $Q\setminus \{x\}\cup \{y\}$, where $x$ is a minimal element of $Q$ for which $f_{3}$ is special, and $y\in S_{4}\setminus R$ is any uncovered element. Let the resulting $P$-packings be $\PP'_{1},\dots,\PP'_{4}.$ (Here, $\PP_{4}=\PP'_{4}$.) Let $M_{j}$ denote the family of minimal (or maximal, if $j$ is even) elements of the copies of $P$ in $\PP'_{j}$ for which $f_{j}$ is special, and are contained in $S_{j}$. Then $|M_{j}|\geq 2^{d-|P|}-|R|-|P|$.
 
 Now, applying Theorem \ref{La}, we can extend $\PP'_{j}$ to a $P$-packing $\PP''_{j}$ that covers all but at most $C_{P}2^{d}/\sqrt{d}$ elements of $S_{j}\setminus R$, let $T_{j}$ denote the uncovered elements. Furthermore, we can find a $P$-packing $\mathcal{Q}_{j}$ in $M_{j}$ that covers all but at most $C_{P}2^{d}/\sqrt{d}$ elements of $M_{j}$. We chose $d$ such that $|M_{j}|-C_{P}2^{d}/\sqrt{d}> |T_{j'}|$ holds for every $j,j'\in [4]$. 
 
 We define our final packing $\PP$ as follows. For $j=1,\dots,4$, pick an arbitrary  sub-collection $\mathcal{Q}'_{j}$ of  $\mathcal{Q}_{j}$ of size $\lfloor |T_{j+1}|/|P|\rfloor$ (indices are meant modulo $4$), and let $M'_{j}$ be the elements of $M_{j}$ covered by $\mathcal{Q}'_{j}$. Let ${\pi_{j}: M'_{j}\rightarrow T_{j+1}}$ be any injection. Note that if $j\in \{1,2,4\}$, then $\pi_{j}$ is a bijection as $|P|$ divides $|T_{j+1}|$. Every $x\in M'_{j}$ is a minimal (or maximal, if $j$ is even) element of some $Q\in\PP''_{j}$ for which $f_{j}$ is a special element. Replace each such $Q$ with $Q\setminus \{x\}\cup\{\pi_{j}(x)\}$ and let $\PP^{*}_{j}$ be the resulting packing. Our final packing is defined as $\PP=\bigcup_{j=1}^{4}\PP_{i}^{*}\cup\bigcup_{j=1}^{4}\mathcal{Q}'_{j}$. Then $\PP$ is a $P$-packing of $A$ that covers every element of $S_{1}\setminus R,S_{2}\setminus R_{2},S_{3}\setminus R$, and all but at most $|P|-1$ elements of $S_{4}\setminus R$.

\end{proof}

We shall use that not only $A$ has good absorption properties, but $2^{[s]}\times A$ as well for any positive integer $s$. To prove this, we need the following well known result about Hamilton-cycles in $2^{[s]}$.

\begin{lemma}\label{Hamiltonpath}
  Let $s$ be a positive integer. There exists an enumeration $x_{1},\dots,x_{2^{s}}$ of the elements of $2^{[s]}$ such that $x_{i}$ and $x_{i+1}$ are comparable for $i=1,\dots,2^{s}-1$.
\end{lemma}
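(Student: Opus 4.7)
The plan is to prove this by induction on $s$, using the standard reflected-Gray-code construction. For the base case $s=1$, the enumeration $\emptyset, \{1\}$ trivially works since the two elements are comparable.

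For the inductive step, suppose we have an enumeration $y_{1},\dots,y_{2^{s-1}}$ of $2^{[s-1]}$ in which consecutive elements are comparable. I would define the enumeration of $2^{[s]}$ by
\[
x_{1},\dots,x_{2^{s}} \ := \ y_{1},\, y_{2},\, \dots,\, y_{2^{s-1}},\, y_{2^{s-1}}\cup\{s\},\, y_{2^{s-1}-1}\cup\{s\},\, \dots,\, y_{1}\cup\{s\}.
\]
That is, list the subsets of $[s-1]$ in the inductive order, then append the element $s$ to each and list them in the reverse order.

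It remains to check that consecutive pairs are comparable. Within the first half, this follows from the inductive hypothesis. Within the second half, we have pairs of the form $y_{i}\cup\{s\}$ and $y_{i-1}\cup\{s\}$, which are comparable precisely because $y_{i}$ and $y_{i-1}$ are comparable by the inductive hypothesis (adding a common element to both sides preserves the inclusion). At the boundary between the two halves, the pair is $y_{2^{s-1}}$ and $y_{2^{s-1}}\cup\{s\}$, which is clearly comparable. Finally, this enumeration hits every element of $2^{[s]}$ exactly once, since the first half covers all subsets of $[s-1]$ (equivalently, subsets of $[s]$ not containing $s$) and the second half covers all subsets of $[s]$ containing $s$.

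I do not anticipate a substantial obstacle here; the whole content is the classical reflected Gray code, and the only care needed is to verify comparability at the junction between the two halves and inside the reversed half.
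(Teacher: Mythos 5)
Your proof is correct and is precisely the classical reflected Gray code construction that the paper cites (without proof) as the source of this lemma; the induction, the comparability check inside the reversed half, and the check at the junction are all in order. Nothing further is needed.
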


In fact, there exists an enumeration $x_{1},\dots,x_{2^{s}}$, where $|x_{i}\Delta x_{i+1}|=1$, see the Gray code \cite{Gray}, for example. However, the weaker statement of Lemma \ref{Hamiltonpath} is already enough for our purposes.

\begin{claim}\label{absorber_product}
Let $r,d,s$ be positive integers such that $d\geq 4C_{P}^{2}2^{4|P|+r} $, and let $A$ be a $d$-dimensional absorber. For each $x\in 2^{[s]}$, let $R_{x}\subset A$ such that $|R_{x}|\leq r$ and let $R=\bigcup_{x\in 2^{[s]}}\{x\}\times R_{x}$. Then  $(2^{[s]}\times A)-R$ has a $P$-packing that covers all but at most $|P|-1$ elements. In particular, if $|P|$ divides $2^{s}|A|-|R|$, then $(2^{[s]}\times A)- R$ has a $P$-partition.	
\end{claim}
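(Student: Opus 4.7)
The plan is to mirror the proof of Claim \ref{absorber} inside the product $2^{[s]}\times A$, decomposed as $T_1\sqcup T_2\sqcup T_3\sqcup T_4$ with $T_j=2^{[s]}\times S_j$. The crucial observation is that the min/max moves used in Claim \ref{absorber} extend to \emph{cross-slice} moves in the product: if $(x,y)\in T_{2j-1}$ is a minimum of a copy of $P$ for which $f_{2j-1}$ is special, then by Claim \ref{special} it can be swapped for any $(x',y')\in T_{2j}$ with $(x',y')\subset(x,y)$ and $f_{2j-1}\in y'$. The inclusion $y'\subseteq y$ is automatic from the absorber's geometry (the same calculation as in Claim \ref{absorber}), so the only new constraint is $x'\subseteq x$ in $2^{[s]}$; dually, max-moves from $T_{2j}$ to $T_{2j+1}$ carry the constraint $x'\supseteq x$.

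I would then run five steps paralleling Claim \ref{absorber}. (i) For each slice $x\in 2^{[s]}$ and each $j\in[4]$, Claim \ref{manyembedding} yields $\geq 2^{d-|P|}-r$ disjoint copies of $P$ in $\{x\}\times S_j$ with $f_j$ special for a min/max and disjoint from $\{x\}\times R_x$; aggregate them into $\PP_j^0\subset T_j$. (ii) Adjust $\PP_j^0$ by at most $|P|-1$ swaps so that the number of uncovered elements in $T_j\setminus R$ is divisible by $|P|$ for $j=1,2,3$. (iii) Extend each slice's packing to $\PP_j^{\mathrm{ext}}$ via Theorem \ref{La}, leaving at most $C_P 2^d/\sqrt d$ holes per slice of $T_j$; write $T_j^{\mathrm{holes}}$ for the aggregate leftover. (iv) With $M_j\subset T_j$ the set of special min/max elements of the unmodified copies of $\PP_j^0$, pack $M_j$ near-fully via Theorem \ref{La} to obtain $\mathcal Q_j$. (v) Cyclically for $j\in[4]$ (indices mod $4$), pick $\mathcal Q'_j\subseteq\mathcal Q_j$ of size $\lfloor|T_{j+1}^{\mathrm{holes}}|/|P|\rfloor$, whose union is some $M'_j\subseteq M_j$, and construct an injection $\pi_j:M'_j\to T_{j+1}^{\mathrm{holes}}$ respecting the slice constraint; for each $m\in M'_j$ replace the copy of $P$ containing $m$ in $\PP_j^{\mathrm{ext}}$ by swapping $m$ for $\pi_j(m)$. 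By Claim \ref{special} the modified family is still a copy of $P$, and the final packing $\bigcup_j(\PP_j^*\cup\mathcal Q'_j)$ leaves at most $|T_4^{\mathrm{holes}}|\bmod|P|\leq|P|-1$ elements uncovered.

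The main obstacle is the construction of the injection $\pi_j$: unlike in Claim \ref{absorber} where any injection sufficed, now $\pi_j$ must respect the slice constraint and its domain $M'_j$ must coincide with the union of a subcollection of $\mathcal Q_j$. For the slice constraint I would argue via a Hall-type bipartite matching: for any collection $H$ of holes lying in slices $D\subseteq 2^{[s]}$, the compatible mins (resp.\ maxes) lie in slices in the up-closure $\uparrow D$ (resp.\ down-closure $\downarrow D$) of $D$, and per slice we have $\geq 2^{d-|P|}-r-|P|$ special mins/maxes versus at most $C_P 2^d/\sqrt d$ holes. Hall's condition thus reduces to $\sqrt d\gtrsim C_P 2^{|P|+r}$, guaranteed by $d\geq 4C_P^2 2^{4|P|+r}$; the extra $2^{2|P|}$ margin over Claim \ref{absorber}'s bound is absorbed by the need to pre-select $\mathcal Q'_j\subseteq\mathcal Q_j$ so that the covered set $M'_j$ itself has a slice-profile admitting the matching (arranged by a slice-balanced construction of $\mathcal Q_j$, e.g.\ via Claim \ref{densepacking} on $M_j\cong 2^{[s+d-|P|]}$, or by a greedy distribution step), and by controlling the cyclic propagation of the divisibility corrections around $T_1\to T_2\to T_3\to T_4\to T_1$.
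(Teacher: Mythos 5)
Your plan is to rerun the whole of Claim \ref{absorber} globally inside $2^{[s]}\times A$, and you correctly identify where it gets hard: the swap of a special minimum $(x,y)\in 2^{[s]}\times S_{2j-1}$ for a hole $(x',y')\in 2^{[s]}\times S_{2j}$ now carries the extra constraint $x'\subseteq x$ (dually $x'\supseteq x$ for max-moves). But you do not actually construct the object your argument needs: a subcollection $\mathcal{Q}'_j\subseteq\mathcal{Q}_j$ whose union $M'_j$ has exactly the size of the hole set \emph{and} admits a slice-compatible bijection onto the holes. These two requirements fight each other. Hall's condition must be checked from the holes' side (a hole in the top slice $\{[s]\}\times S_{2j}$ can only be filled from the top slice), so $M'_j$ must be distributed across slices in a way that dominates the slice-profile of the holes; yet $M'_j$ is forced to be a union of copies of $P$ from a near-perfect packing $\mathcal{Q}_j$ of $M_j$, and those copies may themselves straddle slices. "A greedy distribution step" or "a slice-balanced construction of $\mathcal{Q}_j$" is precisely the content that would need to be proved, and it is not routine: you would have to control, simultaneously, the divisibility bookkeeping around the cycle $T_1\to T_2\to T_3\to T_4\to T_1$ and the per-slice counts of both holes and selected special elements. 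As written, the proof does not go through.

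The paper avoids all of this by \emph{not} globalizing the absorber argument. It enumerates $2^{[s]}$ along a Hamilton path $x_1,\dots,x_{2^s}$ with consecutive elements comparable (Lemma \ref{Hamiltonpath}), and for each $i$ inserts one ``straddling'' copy $P_i$ of $P$ obtained by taking a copy $Q\subseteq A$ avoiding $R_{x_i}\cup R_{x_{i+1}}$, splitting it into a downset $Q_0$ of size equal to the current remainder mod $|P|$ and its complement, and placing $Q_0$ in slice $x_i$ and $Q\setminus Q_0$ in slice $x_{i+1}$ (comparability of $x_i,x_{i+1}$ makes this a copy of $P$). This transfers the divisibility defect from each slice to the next, after which Claim \ref{absorber} applies \emph{within each slice separately} with forbidden set $R_{x_i}\cup P_{i-1}\cup P_i$ of size at most $r+2|P|$ — which is exactly why the hypothesis is $d\geq 4C_P^2 2^{4|P|+r}$, not the Hall-type slack you posited. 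Only the last slice is left with at most $|P|-1$ uncovered elements. If you want to salvage your approach, the missing idea is this remainder-transfer along a chain of comparable slices; with it, no cross-slice matching is needed at all.
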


\begin{proof}
  Let $t=2^{s}$ and let $x_{1},\dots,x_{t}$ be an enumeration of the elements of $2^{[s]}$ such that $x_{i}$ and $x_{i+1}$ are comparable. We define $P_{1},\dots,P_{t-1}$ such that $P_{i}$ is a copy of $P$, $P_{i}\subset (\{x_{i},x_{i+1}\}\times A)\setminus R$, and $|(\{x_{i}\}\times A)\setminus (R_{x_{i}}\cup P_{i}\cup P_{i-1})|$	is divisible by $|P|$ for $i=1,\dots,t-1$ (here, $P_{0}$ is assumed to be $\emptyset$). If we can find such $P_{1},\dots, P_{t-1}$, then by Claim \ref{absorber}, we can find a $P$-partition $\mathcal{P}_{i}$ of  $(\{x_{i}\}\times A)\setminus (R_{x_{i}}\cup P_{i}\cup P_{i-1})$ for $i=1,\dots, t-1$, and a $P$-packing $\mathcal{P}_{t}$ that covers all but at most $|P|-1$ elements of $(\{x_{t}\}\times A)\setminus (R_{x_{t}}\cup P_{t-1})$. But then 
  $$\left(\bigcup_{i=1}^{t}\mathcal{P}_{i}\right)\cup\{P_{1},\dots,P_{t-1}\}$$
  is a $P$-packing of $(2^{[s]}\times A)-R$ that covers all but at most $|P|-1$ elements.
  
  Hence, our task is reduced to finding suitable $P_{1},\dots,P_{t-1}$. We define $P_{1},\dots,P_{t-1}$ one by one, that is, if $P_{1},\dots,P_{j-1}$ is already defined, we define $P_{j}$ as follows. Let $Q$ be an arbitrary copy of $P$ in $A$ that does not contain an element of $R_{x_{j}}\cup R_{x_{j+1}}$, and $\{x_{j}\}\times Q$ is disjoint from $P_{j-1}$. There exists such a copy $Q$ of $P$, as $Q$ is only restricted to not contain at most $2r+|P|$ elements of $A$, so we can apply Theorem \ref{La} to any of the four $d$-dimensional subcubes forming $\{x_{j}\}\times A$, for example.
  
   Upon division by $|P|$, let $q\in \{0,\dots,p-1\}$ be the remainder of the size of ${(\{x_{j}\}\times A)\setminus (R\cup P_{j-1})}$. If $x_{j}\subset x_{j+1}$, then let $Q_{0}$ be a $q$ element downset of $Q$, and if $x_{j+1}\subset x_{j}$, then let $Q_{0}$ be a $q$ element upset of $Q$. Setting $$P_{j}=(\{x_{j}\}\times Q_{0})\cup(\{x_{j+1}\}\times (Q\setminus Q_{0})),$$ it can be easily checked that $P_{j}$ is also a copy of $P$, and $P_{1},\dots,P_{t-1}$ satisfy our desired conditions. 
\end{proof}

\subsection{Finding many disjoint absorbers}

Now we show that if $n$ is sufficiently large, then we can select a collection of absorbers in $T(n)$ such that every element of $T(n)$ can be  covered by a copy of $P$ which uses elements of the absorbers. To this end, if $F\subset 2^{[n]}$ and $x\in 2^{[n]}$, say that \emph{$F$ completes $x$} if there exist at least $|P|$ copies $Q\subset F\cup \{x\}$ of  $P$ such that $x\in Q$, and any two of these copies intersect only in $x$.

\begin{claim}\label{absorber_packing}
Let $d$ and $n$ be positive integers such that $d\geq 2|P|$ and $n\geq 10^{3}d\log d$. Then there exists a collection $\mathcal{A}$ of  $d$-dimensional absorbers in $2^{[n]}$ with the following properties:

\begin{itemize}
 \item [(1)] the elements of $\mathcal{A}$ are pairwise disjoint, 
 \item [(2)] no absorber in $\mathcal{A}$ contains a set with $1$ or $n-1$ elements, 
 \item [(3)] for each $x\in T(n)$ there exists $A\in \mathcal{A}$ such that $A$ completes $x$.
\end{itemize}
\end{claim}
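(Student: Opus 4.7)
The plan is to construct $\mathcal{A}$ by a probabilistic alteration argument. An absorber is specified by the tuple $(\alpha_1,\ldots,\alpha_4,f_1,\ldots,f_4,\gamma)$, so I would sample such tuples uniformly at random, restricting to $1\leq|\gamma|\leq n-4d-1$ in order to guarantee condition (2) automatically (no $S_j$ contains a singleton or an $(n-1)$-element set).

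The structural heart of the argument is a clean sufficient condition for a fixed absorber $A$ to complete a fixed $x\in T(n)$. Working in the cube $S_2\cong 2^{[d]}$, Claim \ref{manyembedding} applied with $f=f_2\in\alpha_2$ yields $2^{d-|P|}$ disjoint copies of $P$ in $S_2$, each having a designated minimum of the form $\lambda_2\cup\{f_2\}\cup z$ for some $z\subseteq\alpha_2-\{f_2\}$ and with $f_2$ special for that minimum. If $x$ satisfies $f_2\in x$ and $x\subsetneq \lambda_2\cup\{f_2\}\cup z$ for sufficiently many of the distinguished $z$'s, then Claim \ref{special}(1) lets us swap the minimum for $x$ in each such copy; since $d\geq 2|P|$, after discarding at most one copy containing $x$ and the $O(|P|)$ copies whose base $z$ conflicts with $x$, at least $|P|$ modified copies remain, giving the desired $|P|$ copies of $P$ through $x$ pairwise meeting only in $x$. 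So $A$ completes $x$. Three analogous ``channels'' handle the symmetric events: $x\subsetneq \lambda_4$ (via $S_4$), $x\supsetneq \lambda_1\cup\alpha_1$ (via $S_1$ and Claim \ref{special}(2)), and $x\supsetneq \lambda_3\cup\alpha_3$ (via $S_3$). Thus $A$ completes $x$ whenever one of four containment conditions relating $x$ to the extrema of the subcubes is satisfied.

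Given this, I would show that for any fixed $x\in T(n)$ with $|x|\leq n/2$, a uniformly random absorber activates the $\lambda_2$- or $\lambda_4$-channel with probability at least $2^{-cd}$ for some absolute constant $c$: condition on $\alpha_3,\alpha_4$ avoiding $x$, on $\alpha_1\cap x\subseteq\{f_1\}$, on $f_2\in \alpha_2\cap x$, and on $\gamma\supseteq x\cap\beta$. Each event has probability at least some fixed power of $2^{-d}$, using that $|x|\leq n/2$. The case $|x|>n/2$ is handled symmetrically via the $S_1$- or $S_3$-channel. Sampling $M=\Theta(2^{n-O(d)})$ independent random absorbers, a Chernoff bound shows that with probability $1-o(2^{-n})$ every $x\in T(n)$ is completed by $\omega(n)$ of them, while the expected number of pairs of absorbers sharing an element is at most $M^2\cdot(4\cdot 2^d)^2/2^n=O(2^{n-O(d)})\ll M$. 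Deleting one absorber from each intersecting pair, a union bound over the $2^n$ choices of $x$ (permissible because $n\geq 10^3 d\log d$) shows that with positive probability every $x$ still has a completing absorber, yielding the desired $\mathcal{A}$.

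The main obstacle I anticipate is extending the completion probability estimate uniformly across all levels, in particular to the extreme levels $|x|$ close to $1$ or $n-1$, where condition (2) forces $|\gamma|$ to avoid precisely the ranges most useful for the random channels; the natural fix, in the spirit of the other constructions in this paper, is to reserve a small deterministic block of coordinates and hand-build absorbers dedicated to elements at the extreme levels, using the substantial slack in the hypothesis $n\geq 10^3 d\log d$.
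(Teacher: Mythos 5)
Your overall architecture (random absorbers, a ``channel'' giving a sufficient containment condition for completion via Claims \ref{manyembedding} and \ref{special}, then an alteration to enforce disjointness) is the same as the paper's, and your completion criterion via $S_2$/$S_4$ (resp.\ $S_1$/$S_3$) is a valid variant of the one the paper uses. But the alteration step as you state it has a genuine gap. You propose to delete one absorber from each intersecting pair and then union-bound over $x$. The problem is that the relevant comparison is not between the number of intersecting pairs and $M$, but between the number of deletions that hit the set $\mathcal{B}_x$ of absorbers completing a \emph{fixed} $x$ and the size of $\mathcal{B}_x$ itself. For $|x|$ near $n/2$ one has $|\mathcal{B}_x|\approx M\cdot 2^{-|x\cap\beta|-O(d)}\approx 2^{n/2-O(d)}$ (this is also where your claimed completion probability of $2^{-cd}$ is wrong: the event $\gamma\supseteq x\cap\beta$ has probability $2^{-|x\cap\beta|}$, which can be $2^{-\Theta(n)}$, not $2^{-O(d)}$), whereas the total number of intersecting pairs is of order $M^2\cdot 2^{O(d\log n)-n}$, which for any reasonable $M$ is vastly larger than $2^{n/2-O(d)}$. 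So nothing prevents the deletions from wiping out all of $\mathcal{B}_x$ for some $x$, and a union bound over $x$ has no small per-$x$ failure probability to work with: whether a given completer of $x$ is deleted is a global, highly dependent event.

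The paper closes exactly this gap with two extra ideas you would need. First, it shows that with probability $\geq 3/4$ the intersection graph $G$ on the sampled absorbers has maximum degree at most $4$ (using that a fixed absorber meets at most $(2n)^{4d}$ others, essentially because intersecting absorbers must have $\gamma$'s agreeing outside $O(d)$ coordinates). Second, instead of a deterministic deletion it deletes a uniformly random endpoint of each edge of $G$; then each absorber survives with probability at least $2^{-5}$, and --- crucially --- these survival events are mutually independent across an independent set of $G$. Since $\mathcal{B}_x$ has size at least $l=160n$ and $G$ has bounded degree, $G[\mathcal{B}_x]$ contains an independent set of size $l/5$, so the probability that no completer of $x$ survives is at most $(1-2^{-5})^{l/25}<2^{-n}$, and the union bound over $x$ goes through. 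Without the bounded-degree property and the second round of randomness, I do not see how your version of the alteration can be completed. (Your other two deviations are fine: restricting $1\leq|\gamma|\leq n-4d-1$ is a legitimate way to get property (2) for free, and your worry about extreme levels is unfounded --- for $|x|$ close to $1$ or $n-1$ the set $x\cap\beta$ or its complement is tiny, so almost every $\gamma$ works and no special construction is needed.)
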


\begin{proof}
	We show that a random collection of absorbers almost has the desired properties with high positive probability, and then we modify this family.
	
	Let $\mathcal{F}$ be the collection of all $d$-dimensional absorbers of $2^{[n]}$. Let $q=2^{-n/4}$ and pick every element of $\mathcal{F}$ with probability $q$. Let $\mathcal{A}_{0}$ be the family of picked absorbers. Let $G$ be the graph on $\mathcal{A}_{0}$ in which two elements are connected by an edge if they intersect.
	
	 Let $\mathcal{E}_{1}$ be the event that the maximum degree of $G$ is at least $k=5$. For a fixed absorber $A\in \mathcal{F}$, the number of absorbers $B\in\mathcal{F}$ that intersect $A$ is at most $2^{4d}d^{4}\binom{n}{d}^{4}\leq (2n)^{4d}$. This is true because there are less than $d^{4}\binom{n}{d}^{4}$ ways to choose the parameters $\alpha_{1}^{B},\dots,\alpha_{4}^{B}$ and $f_{1},\dots,f_{4}$, and if these parameters are fixed and $A\cap B\neq \emptyset$, then there are at most $2^{4d}$ choices for $\gamma^{B}$, as $\gamma^{A}$ and $\gamma^{B}$ can only differ on the set $\bigcup_{j=1}^{4} \alpha_{j}^{A}\cup \bigcup_{j=1}^{4} \alpha_{j}^{B}$. Hence, the probability that $A\in \mathcal{A}_{0}$ has degree at least $k$ is at most $\binom{(2n)^{4d}}{k}q^{k}$. But then, by the union bound $$\PV(\mathcal{E}_{1})\leq 2^{n}\binom{(2n)^{4d}}{k}q^{k}\leq 2^{n}(2n)^{4dk}q^{k}=2^{-n/4}(2n)^{20d}\leq 1/4.$$
	 
	 Let $\mathcal{E}_{2}$ be the event that there exists an absorber in $\mathcal{A}_{0}$ that contains a set with $1$ or $n-1$ elements. Let $u\in [n]$, then the absorber $A$ contains the set $\{u\}$ if and only if $u\in \alpha_{1}^{A}$, $f_{1}^{A}=u$ and $\gamma^{A}=\emptyset$, or $u\in \alpha_{3}^{A}$, $f_{3}^{A}=u$ and $\gamma^{A}=\emptyset$. Hence the number of absorbers in $\mathcal{F}$ containing $\{u\}$ is less than $\binom{n-1}{d-1}d^{3}\binom{n}{d}^{3}\leq n^{4d}$. Therefore, the probability that an absorber in $\mathcal{A}_{0}$ contains $\{u\}$ is at most $qn^{4d}$. Similarly, the probability that an absorber in $\mathcal{A}_{0}$ contains $[n]-u$ is at most $qn^{4d}$. Thus, the union bound gives that $\PV(\mathcal{E}_{2})\leq 2n\cdot qn^{4d}< 1/4$.
	 
	 Let $l=nk2^{k}=160n$ and let $\mathcal{E}_{3}$ be the event that for some $x\in T(n)$ there are at most $l-1$ absorbers $A\in \mathcal{A}_{0}$ that complete $x$. We shall bound the probability that for a given $x$ there are at most $l-1$ absorbers that complete $x$. Let us assume that $|x|\leq n/2$, the other case follows similarly. If the absorber $A$ satisfies that
	 \begin{enumerate}
	 	\item $|\alpha^{A}_{1}\cap x|=1$,
	 	\item $f_{4}^{A}\not\in x$,
	 	\item $x\cap \beta^{A}\subset \gamma^{A}$,
	 \end{enumerate}   
     then $S^{A}_{1}$ completes $x$. Indeed, let $s$ be the single base element in $\alpha^{A}_{1}\cap x$, then  by Claim \ref{manyembedding}, there exist at least $2^{d-|P|}> |P|$ disjoint copies of $P$ in $S^{A}_{1}$ for which the base element $s$ is special for some minimal element. Let $|P|$ of these copies be $Q_{1},\dots, Q_{|P|}$, and let $y_{l}\in Q_{l}$ be minimal such that $s$ is special for $y_{l}$. Then $x\subset y_{l}$ and $s\in x$, so $Q_{l}'=(Q_{l}-y_{l})\cup \{x\}$ is a copy of $P$ in $S_{1}^{A}\cup\{x\}$ by Claim \ref{special}. Therefore, $S_{1}^{A}$ completes $x$, so in particular, $A$ completes $x$.  
     
     The number of absorbers satisfying 1.,2., and 3. is at least $2^{n-|x|-4d}\geq 2^{n/2-4d}=N$, because after fixing the parameters $\alpha^{A}_{1},\dots,\alpha^{A}_{4},f^{A}_{1},\dots,f^{A}_{4}$ arbitrarily satisfying 1. and 2., we have at least $2^{n-|x|-4d}$ choices of $\gamma^{A}$ satisfying 3. Hence, the probability that there are no $l$ absorbers that complete $x$ is at most 
     $$\sum_{i=0}^{l-1}\binom{N}{i}(1-q)^{N-i}q^{i}< lN^{l}(1-q)^{N-l}< e^{l+l\log N-q(N-l)}<e^{300n^{2}-2^{n/4}}< \frac{1}{4\cdot 2^{n}}.$$
     By the union bound, we get $\PV(\mathcal{E}_{3})< 1/4$. 
     
     Hence, $\PV(\mathcal{E}_{1}\cup \mathcal{E}_{2}\cup \mathcal{E}_{3})\leq 3/4$, which means that there exists $\mathcal{A}_{0}$ for which $G$ has maximum degree at most $k-1$, no absorber in $\mathcal{A}_{0}$ contains a set with $1$ or $n-1$ elements, and for each $x\in T(n)$ there are at least $l$ absorbers in $\mathcal{A}_{0}$ that are good for $x$.
     
     Now we are in a position to define $\mathcal{A}$ satisfying (1), (2), and (3). For each edge of $G$, remove one of its endpoints from $\mathcal{A}_{0}$ with equal probability, and let $\mathcal{A}$ be the family of the remaining absorbers. Clearly, any two elements of $\mathcal{A}$ are disjoint, so (1) is satisfied. Also, (2) is satisfied as (2) holds in $\mathcal{A}_{0}$ as well. We finish the proof by showing that (3) holds with positive probability.
     
     The probability that $A\in \mathcal{A}_{0}$ survives in $\mathcal{A}$ is at least $1/2^{k}$ as $A$ is contained in at most $k$ edges of $G$. Also, if $\mathcal{I}\subset \mathcal{A}_{0}$ is an independent set in $G$, then the events $\mathcal{E}_{A}=\{A\in\mathcal{A}\}$ are pairwise independent for $A\in\mathcal{I}$. Let $x\in T(n)$ and let $\mathcal{B}_{x}\subset\mathcal{A}_{0}$ be the set of absorbers that complete $x$. As the maximum degree of $G$ is at most $k-1$ and $|\mathcal{B}_{x}|\geq l$, there exists an independent set $\mathcal{I}_{x}$ in $G[\mathcal{B}_{x}]$ of size at least $l/k$. The probability that no element of $\mathcal{I}_{x}$ survives in $A$ is at most $$(1-1/2^{k})^{l/k}\leq e^{-l/k2^{k}}<1/2^{n}.$$
      Thus, by the union bound, the probability that for each $x\in T(n)$ there exists an absorber in $\mathcal{A}$ that completes $x$ is at least $2/2^{n}$, so there is a choice for $\mathcal{A}$ that satisfies (1),(2) and (3).
\end{proof}

\subsection{Constructing a dense collection of absorbers}\label{sect:denseabsorber}

Let $d=\lceil 4C_{P}2^{6|P|}\rceil$ and $n_{1}=\lceil 10^{3}d\log d\rceil$. Then by Claim \ref{absorber_product}, if $m$ is a positive integer, $A$ is a $d$-dimensional absorber, and $R\subset 2^{[m]}\times A$ such that $|R\cap (\{x\}\times A)|\leq 2|P|$ for every $x\in 2^{[m]}$, then $(2^{[m]}\times A)-R$ has a $P$-packing that covers all but at most $|P|-1$ elements. Also, by Claim \ref{absorber_packing}, there exists a collection $\mathcal{A}$ of absorbers in $2^{[n_{1}]}$ that satisfy the properties (1), (2) and (3).

Let $k$ be  positive integers and let $n_{2}=n_{1}k$. We shall view $2^{[n_{2}]}$ as the cartesian power $B(k)=(2^{[n_{1}]})^{k}$, that is, the elements of $B(k)$ have the form  $(x_{1},\dots,x_{k})$, where $x_{1},\dots,x_{k}\in 2^{[n_{1}]}$. If $x\in B(k)$, $x(i)$ denotes the $i$-th coordinate of $x$. Let $B^{-}(k)=B(k)-\{\min B(k),\max B(k)\}$.

Given a collection $\mathcal{F}$ of families of $2^{[n_{1}]}$, let $\mathcal{F}^{(k)}$ be the collection of subsets of $B(k)$ of the form 
$$\{x_{1}\}\times\dots\times \{x_{m-1}\}\times F\times\{x_{m+1}\}\times\dots\times\{x_{k}\},$$
where $m\in [k]$, $x_{1},\dots,x_{m-1},x_{m+1},\dots, x_{k}\in 2^{[n_{1}]}$, $F\in\mathcal{F}$, and $x_{i}$ is not contained in any member of $\mathcal{F}$ for $i<m$. That is, if $F'\in \mathcal{F}^{(k)}$, then the projection of $F'$ to exactly one of the coordinates is a member of $\mathcal{F}$, while its projection to each of the other coordinates is a single element of $2^{[n_{1}]}$. We shall use the following simple properties of $\mathcal{F}^{(k)}$ that we state without proof.

\begin{claim}\label{power_properties}
	Let $\mathcal{F}$ be a collection of pairwise disjoint families of $2^{[n_{1}]}$.
	
	(1) The members of $\mathcal{F}^{(k)}$ are pairwise disjoint.
	
	(2) $x\in B(k)$ is covered by some member of $\mathcal{F}^{(k)}$ if and only if at least one coordinate of $x$ is covered by some member of $\mathcal{F}$.
\end{claim}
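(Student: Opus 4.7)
The plan is that both parts reduce almost entirely to unwinding the definition of $\mathcal{F}^{(k)}$; the only non-trivial ingredient is the "earlier coordinate" condition in its definition, which is the mechanism that guarantees uniqueness.

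For part (1), I would argue by contradiction: assume two members $F'_{1}, F'_{2}\in\mathcal{F}^{(k)}$ share a common point $x$, and write them as
\[
F'_{j}=\{y^{(j)}_{1}\}\times\dots\times\{y^{(j)}_{m_{j}-1}\}\times F_{j}\times\{y^{(j)}_{m_{j}+1}\}\times\dots\times\{y^{(j)}_{k}\}
\]
with $F_{j}\in\mathcal{F}$, and assume without loss of generality $m_{1}\leq m_{2}$. If $m_{1}<m_{2}$, then by the definition of $\mathcal{F}^{(k)}$ applied to $F'_{2}$, the singleton coordinate $y^{(2)}_{m_{1}}=x(m_{1})$ lies in no member of $\mathcal{F}$; but reading off coordinate $m_{1}$ of $F'_{1}$ gives $x(m_{1})\in F_{1}\in\mathcal{F}$, a contradiction. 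Hence $m_{1}=m_{2}=m$, and then $x(m)\in F_{1}\cap F_{2}$, so the pairwise disjointness of $\mathcal{F}$ forces $F_{1}=F_{2}$. The remaining coordinates are singletons which must coincide with $x(i)$ on both sides, so $F'_{1}=F'_{2}$.

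For part (2), the forward direction is trivial: if $x\in F'\in\mathcal{F}^{(k)}$ and the non-trivial coordinate of $F'$ is $m$ with family $F\in\mathcal{F}$, then $x(m)\in F$. For the converse, suppose some coordinate of $x$ is covered by a member of $\mathcal{F}$, and let $m$ be the \emph{smallest} such index, with $x(m)\in F\in\mathcal{F}$. Then the set
\[
\{x(1)\}\times\dots\times\{x(m-1)\}\times F\times\{x(m+1)\}\times\dots\times\{x(k)\}
\]
meets the definition of $\mathcal{F}^{(k)}$: the condition "$x_{i}$ is not contained in any member of $\mathcal{F}$ for $i<m$" is exactly the minimality of $m$. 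This set contains $x$, as required.

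The entire argument hinges on the asymmetric "first non-trivial coordinate" convention built into the definition; without it, one could take the same family $F$ along two different coordinates and obtain overlapping products. I expect no real obstacle — the claim is essentially a bookkeeping observation, which is presumably why the author omits the proof.
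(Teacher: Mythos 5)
Your proof is correct: the paper states this claim without proof, and your argument is exactly the intended bookkeeping verification, correctly identifying the ``first non-trivial coordinate'' condition in the definition of $\mathcal{F}^{(k)}$ as the key to both disjointness and the converse direction of (2). Nothing is missing.
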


Say that  $x\in B(k)$ is \emph{problematic} if the number of indices $i\in [k]$ for which $x(i)$ is neither $\emptyset$ or $[n_{0}]$ is at most $2^{n_{1}}$. Say that $x$ is \emph{ordinary}, if it is not problematic. Let $\mathcal{B}$ be the collection of absorbers in $\mathcal{A}^{(k)}$ which do not contain a problematic element. Note that if $A\in \mathcal{A}^{(k)}-\mathcal{B}$, then every element of $A$ is problematic.

\begin{claim}\label{absorber_matching}
	Let $O$ be the family of ordinary points $x\in B(k)$ that are not covered by any member of $\mathcal{B}$. There exists a complete matching from $O$ to $\mathcal{B}$ in which each $x\in O$ is matched to $A\in \mathcal{B}$, where $A$ completes $x$.
\end{claim}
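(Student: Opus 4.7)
The plan is to apply Hall's marriage theorem to an auxiliary bipartite graph $G$ between $O$ and $\mathcal{B}$ whose edges encode a natural "$A$ completes $x$" relation, and to verify Hall's condition by comparing the minimum degree on the $O$-side with the maximum degree on the $\mathcal{B}$-side.

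As a preliminary step, I would show that for every $x \in O$, no coordinate $x(j)$ is covered by any member of $\mathcal{A}$: otherwise Claim \ref{power_properties}(2) would give some $A' \in \mathcal{A}^{(k)}$ with $x \in A'$, and since $x$ is ordinary the note following the definition of $\mathcal{B}$ would force $A' \in \mathcal{B}$, contradicting $x \in O$. For each $x \in O$ and each index $i$ with $x(i) \in T(n_1)$, I then pick (via Claim \ref{absorber_packing}(3)) an absorber $A_0 \in \mathcal{A}$ completing $x(i)$ and set
$$A(x,i) = \{x(1)\} \times \dots \times \{x(i-1)\} \times A_0 \times \{x(i+1)\} \times \dots \times \{x(k)\}.$$
The preliminary observation renders the "earlier coordinates not covered" clause in the definition of $\mathcal{A}^{(k)}$ vacuous, so $A(x,i) \in \mathcal{A}^{(k)}$. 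Every element of $A(x,i)$ is obtained from $x$ by replacing $x(i)$ with some element of $A_0 \subset T(n_1)$ (using property (2) of $\mathcal{A}$), so it has at least as many $T(n_1)$-coordinates as $x$ and is thus ordinary; hence $A(x,i) \in \mathcal{B}$. To verify that $A(x,i)$ completes $x$, I would take the $|P|$ copies $Q_1,\dots,Q_{|P|}$ of $P$ in $A_0 \cup \{x(i)\}$ guaranteed by "$A_0$ completes $x(i)$" and lift each $Q_l$ to a copy $Q_l' \subset A(x,i) \cup \{x\}$ by extending its non-$x(i)$ elements coordinate-wise (using the fixed singletons) and placing $x$ in the position of $x(i)$; the lifted copies pairwise intersect only in $x$.

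Taking the edges of $G$ to be the pairs $\{x, A(x,i)\}$, I count degrees. Every $x \in O$ has degree at least the number of indices $i$ with $x(i) \in T(n_1)$ (different $i$'s yield different $A(x,i)$ because the position of the absorber part is determined), and by the definition of ordinary this is strictly greater than $2^{n_1}$. Conversely, if $A \in \mathcal{B}$ is written as $\{y_1\} \times \dots \times A_0 \times \dots \times \{y_k\}$ with absorber $A_0$ at position $i$, and $A = A(x,i')$ for some $x \in O$, then the location of the absorber part forces $i' = i$ and matching the singletons forces $x(j) = y_j$ for $j \neq i$, so $x$ is determined by its $i$-th coordinate $x(i) \in T(n_1)$; hence the degree of $A$ is at most $|T(n_1)| = 2^{n_1} - 2$. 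Edges from any $S \subseteq O$ number at least $|S|(2^{n_1}+1)$, while edges into $N_G(S)$ number at most $|N_G(S)|(2^{n_1}-2)$, giving $|N_G(S)| \geq |S|$; Hall's marriage theorem then produces the required matching. I expect the most delicate point to be the containment $A(x,i) \in \mathcal{B}$, which relies precisely on the threshold $2^{n_1}$ in the definition of "ordinary" together with property (2) of $\mathcal{A}$.
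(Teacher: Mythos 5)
Your proposal is correct and follows essentially the same route as the paper: the same bipartite graph between $O$ and $\mathcal{B}$, the same degree bounds (at least $2^{n_1}+1$ on the $O$-side since its elements are ordinary, at most $|T(n_1)|$ on the $\mathcal{B}$-side since the absorber's position and the fixed singleton coordinates determine everything but one coordinate of $x$), and Hall's theorem via double counting. Your preliminary observation that no coordinate of $x\in O$ is covered by $\mathcal{A}$, and your explicit verification that $A(x,i)\in\mathcal{B}$ and completes $x$ by coordinatewise lifting, just spell out steps the paper treats more briefly.
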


\begin{proof}
 Consider the following bipartite graph $G$ between $O$ and $\mathcal{B}$. Let $A=\{x_{1}\}\times\dots\times \{x_{m-1}\}\times A_{0}\times\{x_{m+1}\}\times\dots\times\{x_{k}\}$ be a member of $\mathcal{B}$, where $A_{0}\in \mathcal{A}$. Join $A$ and $x\in O$ by an edge if $A_{0}$ completes $x(m)$ and $x(i)=x_{i}$ for $i\in [k]-m$. 
 
 With the help of Hall's theorem \cite{H}, we show that there is a complete matching from $O$ to $\mathcal{B}$ in $G$. The degree of every member of $\mathcal{B}$ in $G$ is at most $2^{n_{1}}$, while the degree of every $x\in O$ is at least the number of coordinates of $x$ which do not equal to $\emptyset$ or $[n_{1}]$. The latter is true because for every $x_{0}\in T(n_{1})$ there is an absorber $A_{0}\in \mathcal{A}$ that completes $x_{0}$, so for every coordinate $x(m)$ of $x$ that is not $\emptyset$ or $[n_{1}]$, there is an absorber $A_{0}$ such that $A=\{x(1)\}\times\dots\times \{x(m-1)\}\times A_{0}\times\{x(m+1)\}\times\dots\times\{x(k)\}$ is joined to $x$ by an edge. Clearly, this $A$ is an element of $\mathcal{B}$ as $x(1),\dots,x(k)$ are not covered by any element of $\mathcal{A}$, see (2) in Claim \ref{power_properties}. But as the elements of $O$ are ordinary, this implies that the degree of every $x\in O$ is at least $2^{n_{1}}$.
 
 Therefore, Hall's condition holds. Indeed, let $U\subset O$ and let $V$ be the set of neighbors of $U$ in $G$. By double counting the number of edges $e$ between $U$ and $V$, we arrive to the inequality $|U|2^{n_{1}}\leq e\leq |V|2^{n_{1}}$, which gives $|U|\leq |V|$.
\end{proof}

Now let us deal with the problematic elements of $B(k)$. Say that $x\in B(k)$ is \emph{restricted} if for every $i\in [k]$, we have $|x(i)|\in \{0,1,n_{1}-1,n_{1}\}$, and $x$ is not problematic. Note that by property (2) in Claim \ref{absorber_packing}, and by Claim \ref{power_properties}, no restricted element is covered by any member of $\mathcal{B}$. We shall use restricted elements of $B(k)$ to cover the problematic elements. Let $PR$ denote the family of elements in $B(k)$ that are either problematic or restricted.

\begin{claim}\label{problematic}
	Let $k\geq 100n_{1}2^{n_{1}}$. Then there exists a $P$-packing $\mathcal{P}$ in $PR$ that covers every problematic element of $B(k)^{-}$, and each member of $\mathcal{P}$ contains exactly $1$ problematic element.
\end{claim}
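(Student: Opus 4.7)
The plan is to build, for each problematic $x\in B(k)^-$, a copy $C(x)$ of $P$ in $PR$ that contains $x$ as its unique problematic element and $|P|-1$ restricted elements, and then to assemble these $C(x)$'s into a pairwise-disjoint packing.

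For the single-copy construction, fix a minimal element $p^*\in P$ and define $\phi:P\to 2^{P-\{p^*\}}$ by $\phi(p)=\{q\in P-\{p^*\}:q\leq_P p\}$; using minimality of $p^*$, $\phi$ is an order-isomorphic embedding with $\phi(p^*)=\emptyset$. For problematic $x$, write $Z(x)=\{i:x(i)=\emptyset\}$, $F(x)=\{i:x(i)=[n_1]\}$, and $I(x)=[k]-Z(x)-F(x)$; by coordinate-wise complementation we may assume $|Z(x)|\geq (k-2^{n_1})/2\geq 49n_1 2^{n_1}$. Set $N=2^{n_1}+1$ and pick pairwise disjoint groups $G_r\subseteq Z(x)$ of size $N$, indexed by $r\in P-p^*$. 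Define $y_p\in B(k)$ for each $p\in P$ by setting $y_p(i)=x(i)$ for $i\notin I(x)\cup\bigcup_r G_r$; setting $y_p(i)=\{1\}$ if $i\in G_r$ with $r\in\phi(p)$ and $y_p(i)=\emptyset$ otherwise on group coordinates; and on $I(x)$, setting $y_p(i)=x(i)$, $[n_1]$, or $\emptyset$ according to whether $p=p^*$, $p>_P p^*$, or $p\parallel_P p^*$. (When $I(x)=\emptyset$, use an auxiliary coordinate $j^*\in F(x)$ --- nonempty since $x\in B(k)^-$ --- where $y_p(j^*)=[n_1]-\{1\}$ for $p\parallel_P p^*$, to preserve incomparability with $x$.) A case analysis on the $P$-relations of $p,p'$ to $p^*$ verifies that $p\mapsto y_p$ is a poset isomorphism, so $C(x):=\{y_p\}_{p\in P}$ is a copy of $P$ with $y_{p^*}=x$. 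Each $y_p$ for $p\neq p^*$ has coordinate sizes in $\{0,1,n_1-1,n_1\}$ and at least $|\phi(p)|\cdot N\geq N>2^{n_1}$ singleton coordinates, hence is restricted.

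The main obstacle is organizing these copies into a disjoint packing. The crucial structural observation is that every restricted member of $C(x)$ has all of its more than $2^{n_1}$ singleton coordinates inside $S(x):=\bigcup_r G_r^{(x)}\subseteq Z(x)$. Consequently, if $C(x)\cap C(x')$ shared a restricted element $y$, then the singletons of $y$ would lie in $S(x)\cap S(x')$, forcing $|S(x)\cap S(x')|>2^{n_1}$; moreover, such a conflict would require $x$ and $x'$ to agree on the vast majority of non-group, non-bad coordinates common to both, a severe structural restriction. Using these two observations, one obtains the desired pairwise-disjoint packing, for instance by invoking a hypergraph matching theorem (Pippenger--Spencer or Frankl--R\"odl) applied to the hypergraph whose hyperedges are the valid copies $C(x)$: the huge vertex degree of each problematic element (the number of admissible group tuples in $Z(x)$) dwarfs the codegrees between problematic and restricted vertices. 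Alternatively, a canonical map $x\mapsto S(x)$ may be made explicit and the disjointness verified directly; either approach is technical, and this is where the bulk of this claim's remaining work resides.
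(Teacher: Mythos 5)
Your single-copy construction is sound and close in spirit to the paper's: the paper also embeds $P$ into coordinates of $Z(x)$ (its set $\alpha$) using singletons and empty sets, keeps the remaining coordinates at $\emptyset$ or $[n_1]$, uses one coordinate $s\in\beta\cup\gamma$ with values $[n_1]$ or $[n_1]-t$ to break the unwanted comparabilities with $x$ (your auxiliary coordinate $j^*$), and checks that every non-$x$ element has all coordinate sizes in $\{0,1,n_1-1,n_1\}$ with more than $2^{n_1}$ singleton coordinates, hence is restricted. (One small caveat: coordinate-wise complementation reverses the order and turns copies of $P$ into copies of the dual poset, so the case $|F(x)|\geq|Z(x)|$ must be run symmetrically with a maximal $p^*$ rather than literally reduced to the first case; the paper likewise just says "the other case can be handled in a similar manner.")

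The genuine gap is the assembly step, which you explicitly defer, and which is precisely where the hypothesis $k\geq 100\,n_1 2^{n_1}$ is used. The paper resolves it with an elementary greedy argument resting on two concrete counts: (a) for each problematic $x$ it exhibits a family $\mathcal{P}_x$ of at least $n_1^{(k-N)/2-|P|}$ candidate copies (indexed by tuples $(u_{|P|},\dots,u_{|\alpha|})\in[n_1]^{|\alpha|-|P|+1}$, one singleton $\{u_j\}$ per coordinate) that are \emph{pairwise intersecting only in $x$}, and (b) the number of problematic elements is at most $3^kN^N$, so $|\mathcal{P}_x|\geq M|P|$ and one can pick the copies one by one. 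Your family of candidates (all choices of disjoint groups $G_r\subseteq Z(x)$) does not obviously have property (a): two different group tuples can produce the same restricted element whenever the unions $\bigcup_{r\in\phi(p)}G_r$ coincide, so you would still need a codegree bound and a counting argument, neither of which you supply. Moreover, the tools you name would not close the gap: Pippenger--Spencer and Frankl--R\"odl produce matchings covering all but a small fraction of the vertices, whereas the claim requires \emph{every} problematic element of $B(k)^-$ to be covered. To complete the proof you should either make your candidate copies internally disjoint except at $x$ (e.g.\ by tagging each group choice with a distinguishing singleton, as the paper does) and then run the greedy count against the bound $M<3^kN^N$, or adopt the paper's indexing outright.
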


\begin{proof}
Let $N=2^{n_{1}}$ and let $M$ be the number of problematic elements in $B(k)$. We have $M<3^{k}N^{N}$, because for each coordinate of $x\in B(k)$, there are three choices: either $x(i)=\emptyset$, $x(i)=[n_{1}]$ or $x(i)\in T(n_{1})$. If $x$ is problematic, there are at most $N$ indices $i\in [k]$ such that $x(i)\in T(n_{1})$, so there are less than $N^{N}$ choices for the values of these coordinates.

We show that for each problematic element $x$, there is a collection $\mathcal{P}_{x}$ of at least $n_{1}^{(k-N)/2-|P|}$ copies of $P$ in $B(k)$ such that for each $Q\in \mathcal{P}_{x}$, we have $x\in Q$, every element of $Q-x$ is restricted, and any two members of $\mathcal{P}_{x}$ intersect only in $x$. If we are able to prove this, we are done. Indeed, we chose $k$ such that the inequality $$|\mathcal{P}_{x}|\geq M|P|$$ holds, which means that for each problematic $x\in B(k)$, we can greedily pick $Q_{x}\in\mathcal{P}_{x}$ such that $\{Q_{x}:x\mbox{ is problematic}\}$ is a $P$-packing with the desired properties.

To this end, let $x\in B(k)^{-}$ be problematic. Let $\alpha\subset [k]$ be the set of indices $i$ such that $x(i)=\emptyset$, let $\beta\subset [k]$ be the indices $i$ such that $x(i)=[n_{1}]$, and let $\gamma=[k]-(\alpha\cup\beta)$. Suppose that $|\alpha|\geq |\beta|$, the other case can be handled in a similar manner. As $|\gamma|\geq N$, we have $|\alpha|\geq (k-N)/2$. 

Without loss of generality, let $\alpha=\{1,\dots,|\alpha|\}$. Also, as $x\neq \min B(k)$, there exist $s\in\beta\cup\gamma$ and $t\in x(s)$. Let $P'$ be a copy of $P$ in $2^{[|P|]}$ in which the single element set $p_{0}=\{|P|\}$ is minimal; by Claim \ref{embedding}, there exists such a copy.  For each $u_{i}\in [n_{1}]$, where $i\in \{|P|,|P|+1,\dots,|\alpha|\}$, we define the family $Q=Q_{u_{|P|},\dots,u_{|\alpha|}}$ with the help of $P'$. For each $p\in P'$, let $z_{p}\in B(k)$ be the element, whose $j$-th coordinate is defined as follows.
\begin{itemize}
\item  If $j\in \{1,\dots,|P|-1\}$, then $z_{p}(j)=\{1\}$ if $j\in p$, otherwise $z_{p}(j)=\emptyset$;
\item  if $j\in \{|P|,\dots,|\alpha|\}$, then $z_{p}(j)=\{u_{j}\}$;
\item  if $j=s$, then $z_{p}(j)=[n_{1}]$ if $|P|\in p$, otherwise $z_{p}(j)=[n_{1}]-t$;
\item  if $j\in \beta\cup\gamma-s$, then $z_{p}(j)=[n_{1}]$.
\end{itemize}
Set $Q=\{z_{p}:p\in P'\}$. Then $Q$ is a copy of $P$. Indeed, $Q$ is constant outside of the $|P|$ coordinates $\{1,\dots,|P|-1,s\}$; also, in these coordinates $Q$ takes two different values, so $Q$ lives in the $|P|$-dimensional subcube of $B(k)$ determined by these two values and $|P|$ coordinates, where it is designed to be isomorphic to $P'$. Moreover, every element of $Q$ is restricted. Indeed, every coordinate of $z\in Q$ has either $0,1,n_{1}-1$ or $n_{1}$ elements, but $z$ is not problematic as it has at least $|\alpha|-|P|+1>N$ coordinates of size $1$. Finally, $$\{Q_{u_{|P|},\dots,u_{|\alpha|}}:u_{|P|},\dots,u_{|\alpha|}\in [n_{1}]\}$$ is a packing. Indeed, if $(u_{|P|},\dots,u_{|\alpha|})\neq (u'_{|P|},\dots,u'_{|\alpha|})$, then every element of $Q_{u_{|P|},\dots,u_{|\alpha|}}$ differs in at least one of the coordinates indexed by $\{|P|,\dots,|\alpha|\}$ from every element of $Q_{u'_{|P|},\dots,u'_{|\alpha|}}$.

Now we would like to replace an element of $Q$ with $x$. As $\{|P|\}$ $z=z_{p_{0}}\in Q$. Then $z$ is a minimal element of $Q$ which satisfies $z(j)=\emptyset$ for $j\in\{1,\dots,|P|\}$ and $z(s)=[n_{1}]$. Also, for every $z'\in Q$, we have $z\not<z'$ if and only if $z'(s)=[n_{1}]-t$. But $x<z$ and $t\in x(s)$. Hence, $Q'=(Q-z)\cup\{x\}$ is also a copy of $P$. 

To conclude our proof, note that 
$$\mathcal{P}_{x}=\{Q'_{u_{|P|},\dots,u_{|\alpha|}}:u_{|P|},\dots,u_{|\alpha|}\in [n_{1}]\}$$
is a collection of $n_{1}^{|\alpha|-|P|+1}$ copies of $P$ containing $x$, every element of $Q'-x$ is restricted for $Q'\in \mathcal{P}_{x}$, and any two members of $\mathcal{P}_{x}$ intersect only in $x$.
\end{proof}

\subsection{Divisibility conditions}\label{sect:modp}
As before, $PR$ denotes the family of elements in $B(k)$ that are problematic or restricted. Also, we remind the reader that $T(n)=2^{[n]}-\{\emptyset,[n]\}$.

If $F$ is a subset of some ground set $X$, let $\chi_{F}:X\rightarrow \mathbb{Z}$ be the characteristic function of $F$, that is, $\chi_{F}(x)=1$ if $x\in F$, and $\chi_{F}(x)=0$ otherwise.

 Let $m$ be a positive integer. Say that a function $f:T(m)\rightarrow \mathbb{Z}_{|P|}$ is \emph{realizable} if there exists $P_{1},\dots,P_{s}\subset T(m)$  such that $P_{i}$ is a copy of $P$ for $i\in [s]$ and $f\equiv\sum_{i=1}^{s}\chi_{P_{i}}\Mod{|P|}.$ Also, let $f:B(k)\rightarrow \mathbb{Z}_{|P|}$ be \emph{strongly realizable} if there exists $P_{1},\dots,P_{s}\subset B(k)$ such that $P_{i}$ is a copy of $P$ disjoint from PR for $i\in [s]$, and $f\equiv\sum_{i=1}^{s}\chi_{P_{i}}\Mod{|P|}.$
   
Clearly, if $c\in \mathbb{Z}_{|P|}$ and $f$ and $g$ are (strongly) realizable, then $cf$ and $f+g$ are also (strongly) realizable. We aim to prove the following lemma in this subsection. We follow a similar line of proof as in Lemma 4' in \cite{GLT2}, however, our proof is more technical.

\begin{lemma}\label{modp}
	Let $f:B(k)\rightarrow \mathbb{Z}_{|P|}$ be a function such that $f(x)=0$ if $x\in PR$, and $\sum_{x\in B(k)}f(x)=0$. Then $f$ is strongly realizable. 
\end{lemma}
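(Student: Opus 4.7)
The plan is to reduce the problem to showing that for every pair $x,y\in B(k)\setminus PR$, the function $\chi_{\{x\}}-\chi_{\{y\}}$ is strongly realizable. This suffices because strongly realizable functions form a $\mathbb{Z}_{|P|}$-module: fix any basepoint $x_0\in B(k)\setminus PR$ and write
\[
f\equiv \sum_{x\in B(k)\setminus PR} f(x)\bigl(\chi_{\{x\}}-\chi_{\{x_0\}}\bigr)\pmod{|P|},
\]
where the ``extra'' term $\bigl(\sum_{x} f(x)\bigr)\chi_{\{x_0\}}$ disappears because $\sum_{x\in B(k)}f(x)=0$ and $f$ vanishes on $PR$.

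To build the elementary realizable differences, I would exploit Claim \ref{special}. Given any copy $Q\subset B(k)\setminus PR$ of $P$ whose minimum $x$ has a special base element $f$, and any $x'\in B(k)\setminus PR$ with $f\in x'\subset x$, the family $Q'=(Q-x)\cup\{x'\}$ is again a copy of $P$ disjoint from $PR$ (provided we verify $Q'\cap PR=\emptyset$, which reduces to checking $x'$). Since $\chi_Q-\chi_{Q'}=\chi_{\{x\}}-\chi_{\{x'\}}$, both summands being individually strongly realizable, so is their difference. An analogous move using maximal elements and Claim \ref{special}(2) handles the ``upward'' direction. Hence the relation $x\sim x'$ defined by the existence of such a one-step swap generates a strongly realizable equivalence of characteristic functions on $B(k)\setminus PR$.

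The heart of the argument is then a connectivity statement: the swap graph on $B(k)\setminus PR$ is connected. For this, I would describe a canonical form that every $x\in B(k)\setminus PR$ can be driven to via swap moves. The ample room in $B(k)$ (many coordinates, with a guaranteed coordinate of medium size by the definition of ``not restricted'') allows us to use witnessing copies of $P$ that look like the ones built in Claim \ref{embedding}, localized to a constant number of coordinates. By successively adjusting one coordinate at a time -- shrinking an oversized coordinate towards a middle-sized value, or growing an undersized one, while always keeping the current point ordinary and non-restricted by relying on the many spare coordinates -- one can route $x$ to a fixed canonical element $x_*\in B(k)\setminus PR$. The same route drives $y$ to $x_*$, so $\chi_{\{x\}}-\chi_{\{y\}}$ is strongly realizable.

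The main obstacle will be step (ii), ensuring that every intermediate point along the swap path, and every element of every witnessing copy $Q$, lies in $B(k)\setminus PR$. This is delicate because $PR$ is the union of two regions of quite different flavours: the problematic points (few nontrivial coordinates) and the restricted points (all coordinates of size in $\{0,1,n_1-1,n_1\}$). A swap that harmlessly modifies one coordinate might still accidentally produce a restricted point. I would handle this by always performing swaps in a coordinate of medium size, and by choosing the copy $Q$ so that its ``frozen'' coordinates certify non-restrictedness and ordinariness throughout. The parameters $k\geq 100 n_1 2^{n_1}$ and $d$ chosen in Section \ref{sect:denseabsorber} give more than enough slack coordinates to carry out this bookkeeping.
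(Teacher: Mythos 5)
Your overall architecture is the same as the paper's: reduce to the elementary differences $\chi_{\{x\}}-\chi_{\{y\}}$ (the paper writes $f$ as a sum of functions $f_{a,b}$ and notes $f_{a,b}+f_{b,c}=f_{a,c}$, so it suffices that a ``swap graph'' on $B(k)\setminus PR$ is connected), and realize each elementary difference as $\chi_{Q''}+(|P|-1)\chi_{Q'}$ where $Q''$ is obtained from $Q'$ by a special-element swap. That part of your proposal is correct.

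The gap is in the connectivity argument, which is where essentially all of the work lies, and your ``adjust one coordinate at a time towards a canonical form'' plan breaks down at a specific point: coordinates equal to $\emptyset$ or $[n_{1}]$. A point of $B(k)\setminus PR$ may well have many coordinates equal to $\emptyset$ or $[n_{1}]$ (it only needs more than $2^{n_{1}}$ coordinates in $T(n_{1})$ and one coordinate of non-extreme size), and a within-coordinate swap cannot enter or leave such a value: Claim \ref{special}(1) requires a special base element $f\in x'\subset x$ inside the coordinate being shrunk, so you can never swap a coordinate down to $\emptyset$, and dually you can never swap one up to $[n_{1}]$. The paper therefore splits the argument: Step 1 handles pairs differing in one coordinate with both values in $T(n_{1})$ (via Claim \ref{2elements}, which itself needs a genuine case analysis to connect $x,y\in T(m)$ with $|x\cap y|\le 1$ and $|x\cup y|\ge m-1$ through intermediate sets, while keeping all witnessing copies away from sets of size $1$ and $m-1$ so the product copies avoid $PR$); Step 3 then connects the classes $B_{(\beta,\gamma)}$ with different $\emptyset/[n_{1}]$ patterns by a qualitatively different move, in which the witnessing copy $Q'$ of $P$ lives in a coordinate $l$ \emph{other} than the coordinates being changed, and the minimum $y$ of $Q'$ is replaced by an $x\subset y$ that differs from $y$ in all the $\emptyset$-coordinates at once. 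Your sketch does not contain this move, and without it the swap graph you describe is not connected. To repair the proposal you would need to add (i) the multi-coordinate swap of Step 3 and (ii) the explicit within-$T(n_{1})$ connectivity of Claim \ref{2elements}, together with the bookkeeping that all intermediate copies of $P$ avoid $PR$.
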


\begin{proof}

For $a,b\in B(k)$, let $f_{a,b}=\chi_{\{a\}}-\chi_{\{b\}}$. To prove our lemma, it is enough to show that $f_{a,b}$ is strongly realizable for every $a,b\in B(k)\setminus PR$, as every function satisfying the conditions of Lemma \ref{modp} is the sum of such functions $f_{a,b}$. Consider the graph $G$ on $B(k)\setminus PR$ in which $a$ and $b$ are joined by an edge if $f_{a,b}$ is strongly realizable. We wish to show that $G$ is the complete graph, but it is enough to show that $G$ is connected, because of the identity $f_{a,b}+f_{b,c}=f_{a,c}$. We show that $G$ is connected step by step. First, we consider realizable functions in $2^{[m]}$.

\begin{claim}\label{2elements}
	Let $m\geq 2|P|+2$ and $x,y\in T(m)$. Then $g_{x,y}=\chi_{\{x\}}-\chi_{\{y\}}$ is realizable. Also, if $|x|,|y|\not\in \{1,m-1\}$, then there exists $P_{1},\dots,P_{s}\subset T(m)$ such that $P_{i}$ is a copy of $P$ not containing a set with $1$ or $m-1$ elements for $i\in [s]$, and $g_{x,y}\equiv\sum_{i=1}^{s}\chi_{P_{s}}\Mod{|P|}.$
\end{claim}

\begin{proof}
	Say that a copy of $P$ is \emph{good} if it does not contain a set with $1$ or $m-1$ elements. Also, let $\{x,y\}\in T(m)^{(2)}$ be \emph{good} if there exists $P_{1},\dots,P_{s}\subset T(m)$ such that $P_{i}$ is a good copy of $P$ and $g_{x,y}\equiv\sum_{i=1}^{s}\chi_{P_{s}}\Mod{|P|}.$
	
	First, suppose that $x\subset y$ and $|y|\leq m-|P|-1$. Let $i\in x$ and let $\alpha\subset [m]$ be a set of $|P|-1$ elements such that $\alpha\cap y=\emptyset$. By Claim \ref{embedding}, the $|P|$-dimensional cube $2^{\alpha\cup\{i\}}$ contains a copy $Q$ of $P$ such that $\{i\}\in Q$ is minimal. Let 
	$$Q'=\{z\cup (y-i):z\in Q\},$$
    then $Q'$ is a copy of $P$ in which $y$ is minimal with special element $i$. But then $Q''=(Q'-y)\cup \{x\}$ is also a copy of $P$ and $$g_{x,y}\equiv\chi_{Q''}+(p-1)\chi_{Q'}\Mod{|P|}.$$
    Thus $g_{x,y}$ is realizable. Also, if $|x|\neq 1$, then $|y|\geq 3$, so $|z\cup (y-i)|\geq 2$ for any $z\in Q$. Therefore, $Q'$ does not contain a set with a $1$ element. Also, $|z\cup (y-i)|\leq |P|+m-|P|-2=m-2$, so $Q'$ does not contain a set with $m-1$ elements either. Therefore, $Q'$ and $Q''$ are good and $\{x,y\}$ is good. 
    
     Similarly, if $x\subset y$ and $|x|\geq |P|+1$, then $g_{x,y}$ is realizable. Also, if $|y|\neq m-1$, then $\{x,y\}$ is good.
     
      But then for every $x,y\in T(m)$ satisfying $x\subset y$, $g_{x,y}$ is realizable. Indeed, if $$|x|<|P|+1\leq m-|P|-1<|y|,$$ then there exists $z\in T(m)$ such that $|P|+1\leq |z|\leq m-|P|-1$ and $x\subset z\subset y$, but then $g_{x,z}$ and $g_{z,y}$ are realizable, and $g_{x,y}=g_{x,z}+g_{z,y}$. Also, $|z|\not\in \{1,m-1\}$, so if $|x|,|y|\not\in\{1,m-1\}$, then $\{x,z\}$ and $\{z,y\}$ are good, which gives that $\{x,y\}$ is also good.
    
    Now let $x,y\in T(m)$ arbitrary. If $z=|x\cap y|>1$, then $g_{x,z}$ and $g_{z,y}$ are realizable, so $g_{x,y}$ is also realizable. Also, $|z|\not\in\{1,m-1\}$, so if $|x|,|y|\not\in\{1,m-1\}$, then $\{x,y\}$ is good. We can argue similarly, if $|x\cup y|<m-1$. 
    
    The only case remaining if $|x\cap y|\leq 1$ and $|x\cup y|\geq m-1$. Without loss of generality, let $|x|\geq |y|$, then $|x|\geq m/2-1$ and $|y|\geq m/2+1$. In this case, we can find $z$ such that $z\not\in\{1,m-1\}$,  $|x\cap z|>1$ and $|y\cup z|<m-1$. But then $g_{x,z}$ and $g_{y,z}$ are realizable, so $g_{x,y}$ is realizable. Also, if $|x|,|y|\not\in\{1,m-1\}$, then $\{x,z\}$ and $\{z,y\}$ are good, so $\{x,y\}$ is good.
\end{proof}

\noindent
\textbf{Step 1.} Let $x,y\in B(k)-PR$ be two vectors that only differ in one coordinate, say in the $l$-th coordinate, and neither $x(l)$ or $y(l)$ is equal to $\emptyset$ or $[n_{1}]$. Then $f_{x,y}$ is strongly realizable, as we can apply the previous claim to the $n_{1}$-dimensional subcube of the form $$(x(1),\dots,x(l-1),2^{[n_{1}]},x(l+1),\dots,x(k)).$$ Indeed, if $P_{1},\dots,P_{s}$ are copies of $P$ in $2^{[n_{1}]}$ such that $$g_{x(l),y(l)}\equiv \sum_{i=1}^{s}\chi_{P_{i}}\Mod{|P|},$$
then 
$$f_{x,y}\equiv \sum_{i=1}^{s}\chi_{(x(1),\dots,x(l-1),P_{i},x(l+1),\dots,x(k))}\Mod{|P|}.$$
If either $|x(l)|\in \{1,n_{1}-1\}$ or $|y(l)|\in \{1,m-1\}$, then for any $z\in T(n_{1})$, the vector $(x(1),\dots,x(l-1),P_{i},x(l+1),\dots,x(k))$ is not in PR, so $(x(1),\dots,x(l-1),P_{i},x(l+1),\dots,x(k))$ is disjoint from PR. Also, if $|x(l)|,|y(l)|\not\in \{1,n_{1}-1\}$, then by the second part of Claim \ref{2elements}, we can find $P_{1},...,P_{s}$ such that $P_{i}$ does not contain a set with $1$ or $m-1$ elements, so $(x(1),\dots,x(l-1),P_{i},x(l+1),\dots,x(k))$ is disjoint from PR.

\bigskip
\noindent
\textbf{Step 2.} For $x\in B(k)$, let $\alpha(x)=(\beta(x),\gamma(x))$, where $\beta(x)$ is the set of indices $i\in [k]$ such that $x(i)=\emptyset$, and $\gamma(x)$ is the set of indices $i\in [k]$ such that $x(i)=[n_{1}]$.  For every $\alpha=(\beta,\gamma)\in 2^{[k]}\times 2^{[k]}$, let $B_{\alpha}=\{x\in B(k)-PR:\alpha(x)=\alpha\}$. Here, $B_{\alpha}$ is non-empty if and only if $\beta\cap\gamma=\emptyset$ and $|\beta|+|\gamma|< k-2^{n_{1}}$. By the previous observation, we get that if $x,y\in B_{\alpha}$, then $f_{x,y}$ is strongly realizable. Indeed, we can find a sequence $x=z_{0},z_{1},...,z_{s}=y$ such that $z_{i}$ and $z_{i+1}$ only differ in one coordinate $l$, where $l\not\in \beta(x)\cup\gamma(x)$, and then $f_{z_{i},z_{i+1}}$ is strongly realizable, so $f_{x,y}=\sum_{i=0}^{s}f_{z_{i},z_{i+1}}$ is also strongly realizable. Thus, the graph $G[B_{\alpha}]$ is connected.

\bigskip
\noindent
\textbf{Step 3.} Now fix $\alpha=(\beta,\gamma)$ such that $B_{\alpha}$ is non-empty, let $l\in [k]-(\beta\cup\gamma)$, and let $x\in B_{\alpha}$ be any element that satisfies $|x(l)|\leq n_{1}-|P|$. There exists a copy $Q$ of $P$ in $T(n_{1})$ such that $x(l)$ is minimal in $Q$. Let $y\in B(k)$ such that $x(j)=y(j)$ for every $j\in [k]-\beta$, and $|y(j')|=2$ for every $j'\in \beta$. Then $x\subset y$ and $y\in B_{(\emptyset,\gamma)}$. Let 
$$Q'=\{y(1)\}\times\dots\times\{y(l-1)\}\times Q\times \{y(j+1)\}\times\{y(k)\},$$
then $Q'$ is a copy of $P$ in which $y$ is a minimal element. But $Q''=(Q'-y)\cup x$ is a also a copy of $P$. As $$f_{x,y}\equiv\chi_{Q''}+(p-1)\chi_{Q'}\Mod{|P|},$$
we deduce that $f_{x,y}$ is strongly realizable. This means that the sets  $B_{(\beta,\gamma)}$ and $B_{(\emptyset,\gamma)}$ are connected by an edge in $G$. We can prove similarly that $B_{(\beta,\gamma)}$ and $B_{(\beta,\emptyset)}$ are also connected by an edge in $G$. But then $B_{(\emptyset,\emptyset)}$ is connected to every $B_{\alpha}$, so $G$ is truly connected.

\end{proof}

As there are only finitely many different functions $f:B(k)\rightarrow \mathbb{Z}_{|P|}$, there exists a positive integer $N$ such that if $f$ is strongly realizable, then there exist at most $N$ copies of $P$ in $B(k)\setminus PR$ whose characteristic functions add up to $f$ modulo $|P|$. With careful analysis of the proof, one can show that $N=O(k|P|^{2}2^{n_{2}})$. Indeed, every $g_{x,y}$ in Claim \ref{2elements} can be realized with at most $O(|P|)$ copies of $P$, and every $f_{a,b}$ can be realized with $O(k|P|)$ copies of $P$.  Also, every function $f$ satisfying the conditions of Lemma \ref{modp} is the sum of at most $|P||B(k)|=|P|2^{n_{2}}$ functions of the form $f_{a,b}$. However, we shall not use this quantitative bound on $N$.

\subsection{Adding more dimensions}\label{sect:correction}
 
 Let $n_{3}$ be a positive integer.  We shall view $2^{[n_{3}+n_{2}]}$ as the cartesian product $C=2^{[n_{3}]}\times B(k)$, that is, the elements of $C$ are $(x,y)$, where $x\in 2^{[n_{3}]}$ and $y\in B(k)$. The aim of this subsection is to prove the following lemma. 
 
 \begin{lemma}\label{minmaxlemma}
 	There exists a $P$-packing $\mathcal{P}$ in $C=2^{[n_{3}]}\times B(k)$ that covers every element of ${T(n_{3})\times \{\min B(k),\max B(k)\}}$, and if $(x,y)\in 2^{[n_{3}]}\times B(k)$ is covered by a member of $\mathcal{P}$, then $y$ is problematic.
 \end{lemma}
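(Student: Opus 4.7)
By symmetry, the plan is to construct two separate packings, $\mathcal{P}^{-}$ covering $T(n_{3})\times\{\min B(k)\}$ and $\mathcal{P}^{+}$ covering $T(n_{3})\times\{\max B(k)\}$, and then to set $\mathcal{P}=\mathcal{P}^{-}\cup\mathcal{P}^{+}$. The two halves will be automatically disjoint once $k\geq 2$, because every element of a copy in $\mathcal{P}^{-}$ will have $B(k)$-coordinate equal to $\emptyset$ in all but at most one slot, while every element of a copy in $\mathcal{P}^{+}$ will have $B(k)$-coordinate equal to $[n_{1}]$ in all but at most one slot, and these two shapes cannot coincide.

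To build $\mathcal{P}^{-}$, for each $x\in T(n_{3})$ I produce a single copy $Q^{-}_{x}$ of $P$ in $C$ containing $(x,\min B(k))$ as a minimal element. I assign to $x$ a distinct slot $j_{x}\in[k]$ (which requires $k\geq|T(n_{3})|$, a condition arranged by the parameter choices in Section \ref{sect:final}), pick $f_{x}\in x$, pick any $(|P|-1)$-element set $\beta_{x}$ of base elements belonging to the $j_{x}$-th $B(k)$-slot, and set $\alpha_{x}=\{f_{x}\}\cup\beta_{x}$. Applying Claim \ref{embedding} inside $2^{\alpha_{x}}$ produces a copy $P^{*}_{x}$ of $P$ in $2^{\alpha_{x}}$ in which $\{f_{x}\}$ is minimal; the translate $Q^{-}_{x}=\{z\cup(x-f_{x}):z\in P^{*}_{x}\}$ is then a copy of $P$ in $C$ whose image of $\{f_{x}\}$ is precisely $(x,\min B(k))$. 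Every other element of $Q^{-}_{x}$ comes from some $z\in P^{*}_{x}$ with $z\neq\{f_{x}\}$; since $\{f_{x}\}$ is minimal in $P^{*}_{x}$, we have $\emptyset\notin P^{*}_{x}$, hence $z\cap\beta_{x}\neq\emptyset$, so the element's $B(k)$-coordinate is non-trivial only in slot $j_{x}$ and is therefore problematic.

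Pairwise disjointness of the family $\{Q^{-}_{x}:x\in T(n_{3})\}$ is then immediate: $(x,\min B(k))$ is the unique element of $Q^{-}_{x}$ whose $B(k)$-part is $\min B(k)$ and distinct $x$'s give distinct targets, while all other elements of $Q^{-}_{x}$ have their sole non-trivial $B(k)$-coordinate in slot $j_{x}$ and so cannot collide with any element of a distinct $Q^{-}_{x'}$, whose non-trivial $B(k)$-coordinate lies in $j_{x'}\neq j_{x}$. The packing $\mathcal{P}^{+}$ is constructed dually using the second half of Claim \ref{embedding}: for each $x\in T(n_{3})$ one picks $f'_{x}\in[n_{3}]-x$ (available since $x\neq[n_{3}]$) and a private $(|P|-1)$-subset $\beta'_{x}$ of a distinct slot, obtains a copy of $P$ in $2^{\alpha'_{x}}$ with $\alpha'_{x}-f'_{x}$ maximal, and takes the union of every element with the constant $x\cup([n_{3}+1,n_{3}+n_{2}]-\beta'_{x})$ to land the maximal element exactly on $(x,\max B(k))$.

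The main obstacle I foresee is ensuring the shift construction works uniformly for every $x\in T(n_{3})$ regardless of how $|x|$ behaves, while keeping every non-target element problematic. The uniformity is handled by drawing fresh base elements from $B(k)$: the shift only needs $|\beta_{x}|=|P|-1$ extra base elements disjoint from $x$, which are freely available in the private slot since $n_{1}\geq|P|$. The problematic condition is handled by the single-slot structure, which produces $B(k)$-coordinates with at most one non-trivial component, well below the problematic threshold $2^{n_{1}}$. The remaining bookkeeping --- disjointness between min and max copies and the arithmetic inequality $k\geq|T(n_{3})|$ --- is straightforward and deferred to the parameter setup in Section \ref{sect:final}.
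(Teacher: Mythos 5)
There is a genuine gap, and it is exactly the point you deferred as ``straightforward bookkeeping'': the injective slot assignment $x\mapsto j_{x}\in[k]$ forces $k\geq|T(n_{3})|=2^{n_{3}}-2$, and this is incompatible with the regime in which the lemma is stated and used. In the paper's setup $k$ is fixed first (Claim \ref{problematic} already requires $k\geq 100n_{1}2^{n_{1}}$), the lemma is asserted for an arbitrary positive integer $n_{3}$, and in Section \ref{sect:final} one takes $2^{n_{3}}\geq N$, where $N$ must be large enough to strongly realize \emph{every} admissible $f:B(k)\rightarrow\mathbb{Z}_{|P|}$; since a function supported on $\Theta(2^{n_{2}})$ points needs $\Omega(2^{n_{2}}/|P|)$ copies, $N$ is at least exponential in $n_{2}=n_{1}k$, so $2^{n_{3}}\gg k$ rather than $2^{n_{3}}\leq k+2$. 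Your per-$x$ gadget confines all non-target elements to $\{x-f_{x},x\}\times 2^{\beta_{x}}$ with $\beta_{x}$ inside a single slot, and even if you drop injectivity and only insist that gadgets sharing a slot use disjoint $\beta$'s, a slot of size $n_{1}$ supports at most $\lfloor n_{1}/(|P|-1)\rfloor$ gadgets, for a total of at most $k\lfloor n_{1}/(|P|-1)\rfloor\leq n_{2}$ gadgets --- still vastly fewer than the $2^{n_{3}}-2$ targets. So the construction cannot be repaired by bookkeeping alone; the non-target elements must be allowed to spread over a much larger portion of the problematic set.

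This is precisely why the paper's proof is structured differently. It first handles all $x$ with $|x|\leq cn_{3}$ or $|x|\geq(1-c)n_{3}$ entirely inside $T(n_{3})\times\{\min B(k),\max B(k)\}$ via Claim \ref{smallsets}. For the remaining (middle) $x$ it builds, for each comparable pair $x'\subset x$, a collection $\mathcal{P}_{x,x'}$ of $2^{k-|P|}$ candidate copies through $(x,\min B(k))$ whose other elements have every $B(k)$-slot equal to $\emptyset$ or $[n_{1}]$ (so the free parameter is a subset $\alpha\subseteq[|P|,k]$, giving exponentially many candidates rather than one), and then selects one candidate per target greedily, using the bounded-degree graph of Claim \ref{smalldegreegraph} to bound the number of already-chosen copies that could collide. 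Note also that your incomparability bookkeeping silently relies on the second coordinate: any element of $\{x\}\times B(k)$ is $\geq(x,\min B(k))$, so elements of $P$ incomparable to the chosen minimal element must leave the fiber over $x$ --- this is why both your construction and the paper's use a second first-coordinate $x'\subsetneq x$, and why the abundance of candidates has to come from the $B(k)$ direction.
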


We prepare the proof of this lemma with the following two claims.
 
 \begin{claim}\label{smallsets}
 	Let $m>5|P|$ be an integer and let $0<c\leq 0.1$. There exists a $P$-packing that covers every $x\in T(m)$ satisfying $|x|\leq cm$ or $|x|\geq (1-c)m$. 
 \end{claim}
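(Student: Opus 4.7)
My approach is symmetric: I will build a $P$-packing $\mathcal{P}_{\text{low}}$ that covers every $x \in T(m)$ with $|x| \le cm$, and (by passing to complements and using copies of the dual poset, which are the complements of copies of $P$) a packing $\mathcal{P}_{\text{high}}$ that covers every $x$ with $|x| \ge (1-c)m$. Under the hypotheses $m > 5|P|$ and $c \le 0.1$ one has $cm + |P| - 1 < m - cm - |P| + 1$, so all sets occurring in $\mathcal{P}_{\text{low}}$ have size strictly less than all sets occurring in $\mathcal{P}_{\text{high}}$, and the union is automatically a packing.

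To build $\mathcal{P}_{\text{low}}$, for each small $x$ I would attach a copy $Q_x$ of $P$ containing $x$, constructed as follows. Choose $g_x \in x$ and $A_x \subseteq [m] \setminus x$ with $|A_x| = |P| - 1$. The interval $[x - \{g_x\}, x \cup A_x]$ in $2^{[m]}$ is a Boolean cube of dimension $|P|$ that identifies with $2^{\{g_x\} \cup A_x}$ via $y \mapsto y \setminus (x - \{g_x\})$; under this identification $x$ corresponds to the singleton $\{g_x\}$. Claim \ref{embedding} produces a copy of $P$ in this cube with $\{g_x\}$ as a minimal element, and pulling it back through $T \mapsto (x - \{g_x\}) \cup T$ gives a copy $Q_x$ of $P$ containing $x$, every member of which has size in $[|x|, |x| + |P| - 1]$ and contains $x - \{g_x\}$.

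The crux is to pick $(g_x, A_x)$ simultaneously for every small $x$ so that the copies $Q_x$ are pairwise disjoint. I would do this greedily, processing small sets in increasing order of size. The key locality observation is that if $Q_x$ and $Q_{x'}$ share a set $z$ then $z \supseteq (x - \{g_x\}) \cup (x' - \{g_{x'}\})$ while $|z| \le \min(|x|, |x'|) + |P| - 1$, which forces $|x \triangle x'| \le 2|P|$. Consequently, when processing $x$ only those previously processed $x'$ with $|x \triangle x'| \le 2|P|$ can possibly contribute conflict sets, and there are at most $O_P(m^{|P|})$ of them; each contributes at most $|P|$ sets $z$ that could conflict. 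For each such conflict set $z$ with $k = |z \setminus x|$, a direct count shows that the number of injections $\pi_x$ (equivalently, of pairs $(g_x, A_x)$ together with an embedding) satisfying $z \in Q_x$ is smaller than the total by a factor of order $(|P|/m)^k$ for $k \ge 1$.

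The main obstacle, and where the hypothesis $m > 5|P|$ enters, is verifying the bookkeeping: one must classify conflict sets $z$ by $k$ and by the displacement $|x' \setminus x|$, bound $|U_k|$ either by the total number of conflict-eligible $z$'s (roughly $\binom{m-|x|}{k}$) or by the number $|P| \cdot (\text{processed } x' \text{ within distance } 2|P|)$, whichever is smaller, and then show that
\[
\sum_{k \ge 1} |U_k| \cdot \frac{|P|}{\binom{m-|x|}{k}} \;<\; 1
\]
uniformly in $x$. The slack provided by $m > 5|P|$ makes each ratio $|P|/(m - |x|)$ strictly less than a small constant (since $|x| \le cm \le m/10$), so the sum is dominated by its $k = 1$ term and stays bounded below $1$, ensuring the greedy step always finds a valid $(g_x, A_x)$. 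Once $\mathcal{P}_{\text{low}}$ is produced this way, the symmetric construction in the upper levels yields $\mathcal{P}_{\text{high}}$, and their disjoint union is the desired $P$-packing.
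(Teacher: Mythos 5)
Your overall architecture (attach to each small $x$ a copy of $P$ living just above $x$ via Claim \ref{embedding}, choose these greedily, handle the top levels by duality/complementation) matches the paper's. But there is a genuine gap in the greedy step: your candidate family for a given $x$ is parameterized only by $(g_x,A_x)$, hence has size $|x|\binom{m-|x|}{|P|-1}$, which is polynomial in $m$, and the union bound over conflicts does not close. Concretely, a conflict set $z$ with $x\setminus z=\{g\}$ and $z\setminus x=\{w\}$ (so $k=1$) rules out every pair with $g_x=g$ and $w\in A_x$, i.e.\ a fraction about $\frac{1}{|x|}\cdot\frac{|P|-1}{m-|x|}$ of all candidates; nothing in your setup prevents the previously chosen copies from containing on the order of $|x|(m-|x|)$ such sets $z$ (sets of size $|x|$ at Hamming distance $2$ from $x$ are abundant and are legitimate members of earlier $Q_{x'}$), in which case the total excluded fraction is of order $|P|$, not less than $1$. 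Your proposed bound $|U_k|\le |P|\cdot\#\{\text{processed }x'\text{ within distance }2|P|\}$ is even worse, being polynomial in $m$ of degree about $2|P|$, while the per-conflict gain is only $(|P|/(m-|x|))^{k}$. So the inequality $\sum_{k\ge 1}|U_k|\,|P|/\binom{m-|x|}{k}<1$ is not established and appears false in the worst case.

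The paper sidesteps this by making the candidate family for each $x$ \emph{exponentially} large and internally almost disjoint: fix $i\in x$ and one set $\alpha_0$ of $|P|-1$ fresh coordinates, build a single copy $Q$ of $P$ with $\{i\}$ minimal in $2^{\alpha_0\cup\{i\}}$, and then translate it upward by taking unions with an arbitrary $\alpha\subseteq[m]\setminus(x\cup\alpha_0)$. This yields $2^{m-|x|-|P|+1}\ge 2^{(1-c)m-|P|}$ copies of $P$ through $x$ that pairwise intersect only in $x$, so each previously chosen copy of $P$ destroys at most $|P|$ candidates. Since the number of elements to be covered is at most $2\cdot 2^{H(c)m}$ with $H(0.1)<0.5$, the greedy closes immediately from $2\cdot 2^{H(c)m}|P|<2^{(1-c)m-|P|}$, which is where $m>5|P|$ and $c\le 0.1$ enter. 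If you want to rescue your version, you should enlarge your candidate set in the same way (vary the ``padding'' set $\alpha$ over all subsets of the unused coordinates, not just over $(|P|-1)$-element sets), after which the delicate conflict classification becomes unnecessary.
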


\begin{proof}
	Let $x\in T(m)$ such that $|x|\leq cm$. First, we show that there is a collection $\mathcal{P}_{x}$ of copies of $P$ such that $|\mathcal{P}_{x}|\geq 2^{(1-c)m-|P|}$, every member of $\mathcal{P}_{x}$ contains $x$, and any two members of $\mathcal{P}_{x}$ intersect only in $x$. Let $i\in x$ and let $\alpha\subset [m]-x$ be an arbitrary subset of size $|P|-1$. By Claim \ref{embedding}, there is a copy $Q$ of $P$ in $2^{\alpha\cup\{i\}}$ in which $\{i\}$ is a minimal element. For every $\alpha\subset [m]-(x\cup \alpha)$, let $$Q_{\alpha}=\{q\cup \alpha\cup (x-i):q\in Q\}.$$
	Then $(Q_{\alpha}-(x\cup \alpha))\cup \{x\}$ is also a copy of $P$ and the collection $\{Q_{\alpha}: \alpha\subset [m]-(x\cup \alpha)\}$ contains $2^{m-|x|-|P|+1}$ disjoint copies of $P$. Hence, setting 
	$$\mathcal{P}_{x}=\{(Q_{\alpha}-(x\cup \alpha))\cup \{x\}: \alpha\subset [m]\setminus (x\cup \alpha)\}$$
	suffices. If $|x|\geq (1-c)m$, we can also find a collection $\mathcal{P}_{x}$ with the same properties.
	
	But then we can construct our desired $P$-packing greedily. The number of elements $x\in T(m)$ with $|x|\leq cm$ or $|x|\geq (1-c)m$ is at most $2\sum_{l=0}^{cm}\binom{m}{l}<2\cdot2^{H(c)m}$, where $H(c)=-c\log_{2}c-(1-c)\log_{2}(1-c)$ is the binary entropy function. Pick copies of $P$ one-by-one to cover every $x\in T(m)$ satisfying $|x|\leq cm$ or $|x|\geq (1-c)m$. If there are $M$ copies of $P$ picked so far and we wish to cover $x$, then these $M$ copies can intersect at most $|P|M$ members of $\mathcal{P}_{x}$. Therefore, as long as $M|P|<|\mathcal{P}_{x}|$, we can find a copy of $P$ that covers $x$ and is disjoint from every previously picked copy of $P$.
	
	It only remains to verify that $M|P|<|\mathcal{P}_{x}|$, which follows from the inequality $$2\cdot 2^{H(c)m}<2^{(1-c)m-|P|}.$$ If $c\leq 0.1$, then $H(c)<0.5$, so the inequality is satisfied noting that ${m>5|P|}$. 
\end{proof}

\begin{claim}\label{smalldegreegraph}
	Let $m$ be a positive integer and let $0<c<1/2$. There exists a graph $G$ on $2^{[m]}$ such that the maximum degree of $G$ is at most $2\lceil 1/c \rceil$, and for every $x\in 2^{[m]}$ satisfying $cm\leq |x|\leq (1-c)m$, there exist $x_{1},x_{2}\in 2^{[m]}$ such that $x_{1}\subset x\subset x_{2}$, and $xx_{1}$, $xx_{2}$ are edges of $G$.   	
\end{claim}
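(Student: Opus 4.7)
The plan is to build $G$ as a union of partial matchings between consecutive levels of $2^{[m]}$, controlled via the $b$-matching form of Hall's theorem. Write $L_k=\{x\in 2^{[m]}:|x|=k\}$ and set $r=\lceil 1/c\rceil$. For every middle $x\in L_k$ I want one down-edge to a witness $x_1\subsetneq x$ and one up-edge to a witness $x_2\supsetneq x$, and these will be produced by choosing, for each $k$ in the middle range $[\lceil cm\rceil,\lfloor(1-c)m\rfloor]$, a partial matching $E_k^-$ inside the inclusion bipartite graph from $L_k$ to $L_{k-1}$ and a partial matching $E_k^+$ from $L_k$ to $L_{k+1}$, and then taking $G:=\bigcup_k(E_k^-\cup E_k^+)$.

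The bipartite inclusion graph between $L_k$ and $L_{k-1}$ is biregular with $L_k$-side degree $k$ and $L_{k-1}$-side degree $m-k+1$. The standard $b$-matching consequence of Hall's theorem supplies, for any $d\geq (m-k+1)/k$, a subgraph in which every vertex of $L_k$ has degree exactly $1$ and every vertex of $L_{k-1}$ has degree at most $d$. Since $k\geq cm$ forces $(m-k+1)/k\leq 1/c$, the tighter budget $d=r-1$ is Hall-feasible whenever $(r-1)k\geq m-k+1$, i.e.\ $k\geq(m+1)/r$; this holds at every middle level except possibly at the lowest, $k=\lceil cm\rceil$, where I fall back on $d=r$ instead. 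Build $E_k^+$ symmetrically with target side $L_{k+1}$, using $d=r-1$ everywhere save possibly at the topmost middle level, where again $d=r$. Property (3) of the claim is then immediate: for middle $x$, the unique $E_{|x|}^-$-edge gives $x_1$ and the unique $E_{|x|}^+$-edge gives $x_2$, both adjacent to $x$ in $G$.

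For the degree bound at $y\in L_j$, the edges of $G$ incident to $y$ split into three groups: (i) if $j$ is middle, the two self-edges coming from $y$'s own entries in $E_j^-\cup E_j^+$; (ii) incoming down-targets from $L_{j+1}$ via $E_{j+1}^-$, at most $d_{j+1}^-$ many; (iii) incoming up-targets from $L_{j-1}$ via $E_{j-1}^+$, at most $d_{j-1}^+$ many. At an interior middle level both $d_{j+1}^-$ and $d_{j-1}^+$ can be taken to be $r-1$, giving $2+2(r-1)=2r$; at a boundary middle level one of (ii), (iii) vanishes because the neighbouring level lies outside the middle range, and even if the remaining matching must resort to $d=r$ the total is at most $2+r\leq 2r$, which is valid since $c<1/2$ forces $r\geq 3$; at a non-middle $j$, (i) contributes $0$ and (ii), (iii) together give at most $2r$. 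The main obstacle, and the only subtle point, is checking the Hall-feasibility inequalities for each choice of $d$ and then verifying that whenever the tighter bound $d=r-1$ fails, the excess degree incurred on that matching lands exclusively on vertices whose opposite-direction contribution is zero, so the overall budget $2\lceil 1/c\rceil$ is preserved.
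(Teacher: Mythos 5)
Your proposal is correct and follows essentially the same route as the paper: both arguments run Hall's theorem (via the replication trick) on the biregular inclusion graphs between consecutive levels to extract bounded fan-out semi-matchings, and then take the union over the middle levels. The only difference is bookkeeping — the paper packages each consecutive level-pair into a single subgraph of minimum degree $1$ and maximum degree at most $\lceil 1/c\rceil$ and charges each vertex to its two adjacent level-pairs, whereas you split the budget as $2+(r-1)+(r-1)$ with a fallback to $d=r$ at the extreme middle levels; both yield the same bound $2\lceil 1/c\rceil$.
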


\begin{proof}
	For $l=0,\dots,m$, let $L_{l}=\{x\in 2^{[m]}:|x|=l\}$. We show that if $cm\leq l\leq (1-c)m$, then the comparability graph between $L_{l}$ and $L_{l+1}$ contains a subgraph of minimum degree at least $1$ and maximum degree at most $\lceil 1/c\rceil$. Then we are done as we can take $G$ to be the union of these graph for $l=cm,\dots,(1-c)m$.
	
	We shall use the following simple consequence of Hall's theorem \cite{H}: if $H=(A,B;E)$ is a bipartite graph in which the degree of every vertex in $A$ is $d_{A}$, and the degree of every vertex in $B$ is $d_{B}$, then there exists a complete matching from $A$ to $B$ if $d_{A}\geq d_{B}$.
	
	Suppose that $cm\leq l<m/2$, the other case can be handled in a similar fashion. Let $B$ be the bipartite comparability graph between $L_{l}$ and $L_{l+1}$. As the degree of every vertex in $L_{l}$ is $n-l$, and the degree of every vertex in $L_{l+1}$ is $l+1$, where $n-l\geq l+1$, there is a complete matching $M_{1}$ from $L_{l}$ to $L_{l+1}$.
    Also, let $r=\lceil (n-l)/(l+1)\rceil$ and let  $L_{l}^{(1)},\dots,L_{l}^{(r)}$ be $r$ disjoint copies of $L_{l}$. Consider the bipartite comparability graph $B'$ between $L_{l}'=\bigcup_{i=1}^{r}L_{l}^{(i)}$ and $L_{l+1}$. Every vertex in $L_{l}'$ has degree $n-l$, while degree of every vertex in $L_{l+1}$ is $(l+1)r$. As $n-l\leq r(l+1)$, there is a complete matching $M_{2}$ from $L_{l+1}$ to $L_{l}'$. But then $M_{2}$ induces a subgraph $T$ of $B$ in which the degree of every vertex of $L_{l+1}$ is $1$, and the degree of every vertex in $L_{l}$ is at most $r$. Taking the union $M_{1}\cup T$, we get a subgraph of $B$ with minimum degree $1$ and maximum degree $r+1$. As $r+1<\lceil 1/c\rceil$, this finishes our proof.
\end{proof}

\begin{proof}[Proof of Lemma \ref{minmaxlemma}]
	Let us remind the reader that $\min B(k)=(\emptyset,\dots,\emptyset)$ and $\max B(k)=([n_{1}],\dots,[n_{1}])$.
	
	Let $S$ be the family of problematic elements of $B(k)$. Let $x,x'\in 2^{[n_{3}]}$ such that $x'\subset x$. First, we show that there is a collection $\mathcal{P}_{x,x'}$ of at least $2^{k-|P|}$ copies of $P$ in $\{x,x'\}\times S$ that cover $(x,\min B(k))$, and any two members of $\mathcal{P}_{x,x'}$ intersect only in $(x,\min B(k))$. Similarly, if $x\subset x'$, then there is a collection $\mathcal{P}_{x,x'}$ of at least $2^{k-|P|}$ copies of $P$ in $\{x,x'\}\times S$ that cover $(x,\max B(k))$, and any two members of $\mathcal{P}_{x,x'}$ intersect only in $(x,\max B(k))$. 
	
	We prove only the case $x'\subset x$, the other case is similar. Let $Q$ be a copy of $P$ in $2^{[0,|P|-1]}$ in which $\{0\}$ is minimal. For every $\alpha\subset [|P|,k]$ and $q\in Q$, define $q_{\alpha}$ as follows. If $q=\{0\}$, let $q_{\alpha}=(x,\min B(k))$. Otherwise, let $q_{\alpha}=(x_{0},y)$, where
	
	$$x_{0}=\begin{cases} x &\mbox{ if } 0\in q,\\
	                      x'&\mbox{ if } 0\not\in q,
	        \end{cases}$$	        
	 and for $i=1,\dots,k$,
	 $$y(i)=\begin{cases} [n_{1}] &\mbox{ if } i\in q\cup\alpha,\\
						   \emptyset &\mbox{ if } i\not\in q\cup\alpha.	 
	 \end{cases}$$
	  As $y(i)\in\{\emptyset,[n_{1}]\}$, $y$ is problematic. Also, it can be easily checked that $Q_{\alpha}=\{q_{\alpha}:q\in Q\}$  is a copy of $P$ which contains $(x,\min B)$.  Finally, if $\alpha\neq\alpha'$, then $Q_{\alpha}$ and  $Q_{\alpha'}$ only intersect in $(x,\min B(k))$.

	  Now let $c=0.1$ and apply Claim \ref{smallsets} to find a $P$-packing $\mathcal{P}_{1}'$ in $T(n_{3})$ that covers every set of size at most $cn_{3}$ or at least $(1-c)n_{3}$. Then $$\mathcal{P}_{1}=\{Q\times\{\min B(k)\}:Q\in\mathcal{P}_{1}\}\cup\{Q\times\{\max B(k)\}:Q\in\mathcal{P}_{1}\}$$
	  is a $P$-packing in $T(n_{3})\times \{\min B(k),\max B(k)\}$ that covers every element $(x,y)$ for which $y\in \{\min B(k),\max B(k)\}$, and $|x|\leq cn_{3}$ or $|x|\geq (1-c)n_{3}$. 
	  
	  It remains to find a $P$-packing $\mathcal{P}_{2}$ that covers the remaining elements of $T(n_{3})\times \{\min B(k),\max B(k)\}$ and is disjoint from $\mathcal{P}_{1}$. By Claim \ref{smalldegreegraph}, there exists a graph $G$ on $2^{[n_{3}]}$ such that the maximum degree of $G$ is at most  $2\lceil 1/c \rceil$, and for every $x\in 2^{[n_{3}]}$ satisfying $cn_{3}\leq |x|\leq (1-c)n_{3}$, there exist $x_{1},x_{2}\in 2^{[m]}$ such that $x_{1}\subset x\subset x_{2}$, and $xx_{1}$, $xx_{2}$ are edges of $G$. Now we cover the remaining elements $(x,y)\in T(n_{3})\times \{\min B(k),\max B(k)\}$, where $cn_{3}< |x|<(1-c)n_{3}$, one-by-one with the help of the following rule: if $x_{1}\subset x\subset x_{2}$ are such that $xx_{1}$ and $xx_{2}$ are edges of $G$, then we cover $(x,\min B(k))$ with a member of $\mathcal{P}_{x,x_{2}}$, and we cover $(x,\max B(k))$ with a  a member of $\mathcal{P}_{x,x_{1}}$.
	  
	   Suppose that at one step, we wish to add a copy of $P$ to $\mathcal{P}_{2}$ that covers $(x,\min B(k))$. As the maximum degree of $G$ is at most $2\lceil 1/c \rceil$, there are at most $2\lceil 1/c \rceil+1$ members of $\mathcal{P}_{2}$ that intersect $\{x\}\times S$, and at most $2\lceil 1/c \rceil+2$ members of $\mathcal{P}_{2}$ that intersect $\{x_{2}\}\times S$. (Here, we add $+1$ and $+2$ because there might be members of $\mathcal{P}_{2}$ already covering $(x,\max B(k))$, $(x_{2},\min B(k))$ and $(x_{2},\max B(k))$.) Hence, there are at most $4\lceil 1/c \rceil+3$ members of $\mathcal{P}_{2}$ that intersect any member of $\mathcal{P}_{x,x_{2}}$. Also, the members of $\mathcal{P}_{1}$ can intersect $\{x,x_{2}\}\times S$ in at most $4$ points, namely $\{x,x_{2}\}\times\{\min B(k),\max B(k)\}$. Therefore, if the inequality
	   $$|P|(4\lceil 1/c \rceil+3)+4< |\mathcal{P}_{x,x_{2}}|$$
	   holds, then there is a member of $\mathcal{P}_{x,x_{2}}$ that is disjoint from every member of $\mathcal{P}_{1}\cup\mathcal{P}_{2}$, and we add this copy of $P$ to $\mathcal{P}_{2}$. But this inequality clearly holds, as the left hand side is equal to $43|P|+4$, and the right hand side is at least $2^{k-|P|}$. We proceed similarly in the case we want to add a copy of $P$ to $\mathcal{P}_{2}$ that covers $(x,\max B(k))$.
	   
	   But then we are done as the $P$-packing $\mathcal{P}_{1}\cup\mathcal{P}_{2}$ satisfies the conditions.
\end{proof}

\subsection{Putting everything together}\label{sect:final}

In this subsection, we finish the proof of Theorem \ref{mainthm2}.

Let $n_{3}$ be any positive integer such that $2^{n_{3}}\geq N$, where $N$ is the number defined in the end of Subsection \ref{sect:modp}, and let $n=n_{2}+n_{3}$. We show that $T(n)$ has a $P$-partition that covers all but at most $|P|-1$ elements. We shall view $2^{[n]}$ as the cartesian product $C=2^{[n_{3}]}\times B(k)$, that is, the elements of $C$ are the pairs $(x,y)$, where $x\in 2^{[n_{3}]}$ and $y\in B(k)$. Also, let $C^{-}=C-\{\min C,\max C\}$.

First, let $\mathcal{P}_{1}$ be a $P$-packing described in Lemma \ref{minmaxlemma} that covers every element of $T(n_{3})\times \{\min B(k),\max B(k)\}$, and only covers elements $(x,y)\in C$ for which $y$ is problematic. 

Now for each $x\in 2^{[n_{3}]}$, consider $\{x\}\times B(k)$. Let $\mathcal{P}_{2}(x)$ be a $P$-packing in $\{x\}\times PR$ which covers every element $(x,y)$, where $y$ is problematic and $(x,y)$ is not covered by $\mathcal{P}_{1}$. There exists such a packing by Claim \ref{problematic}. Let $\mathcal{P}_{2}=\bigcup_{x\in 2^{[n_{3}]}} \mathcal{P}_{2}(x)$.

 We remind the reader an element of $B(k)$ is ordinary if it is not problematic, and $\mathcal{B}$ is the collection of absorbers in $B(k)$ not containing problematic elements, see Section \ref{sect:denseabsorber}. Let $A^{*}=\bigcup_{A\in \mathcal{B}} A$ be the family of elements in $B(k)$ contained in the absorbers, and let $O\subset B(k)$ be the family of ordinary points of $B(k)$ not contained in $A^{*}$. By Claim \ref{absorber_matching}, there exists an injection $\tau :O\rightarrow \mathcal{B}$ such that $\tau(y)$ completes $y\in O$. For $x\in 2^{[n_{3}]}$, let $O_{x}$ be the family of elements $y\in O$ such that $(x,y)$ is not covered by $\mathcal{P}_{2}$. Define the $P$-packing $\mathcal{P}_{3}(x)$ such that for each $y\in O_{x}$ we add a copy of $P$ which covers $(x,y)$ and is contained in $\{x\}\times (\tau(y)\cup\{y\})$. Let $\mathcal{P}_{3}=\bigcup_{x\in 2^{[n_{3}]}}\mathcal{P}_{3}(x)$.

So far, we have a $P$-packing $\mathcal{P}_{1}\cup\mathcal{P}_{2}\cup\mathcal{P}_{3}$ that covers every element of
$$C^{-}-(2^{[n_{3}]}\times A^{*}),$$
and for each $x\in 2^{[n_{3}]}$ and $A\in\mathcal{B}$, at most $|P|$ elements of $\{x\}\times A$ are covered. Let $m_{A}'$ be the number of elements of $2^{[n_{3}]}\times A$ covered and let $m_{A}=2^{n_{3}}|A|-m_{A}'$. Fix an arbitrary member $A_{0}$ of $\mathcal{B}$ and let $f:B(k)\rightarrow \mathbb{Z}_{|P|}$ be any function satisfying the following properties:

(1) $f(y)=0$, if $y\not\in A^{*}$,

(2) if $A\in\mathcal{B}-A_{0}$, then $\sum_{y\in A}f(y)=m_{A}$,

(3) $\sum_{y\in2^{[n_{3}]}}f(y)=0$.\\
One can easily construct such a function $f$: for example, take one element $y_{A}$ from every absorber $A\in\mathcal{B}$ and set $f(y_{A})=m_{A}$ if $A\neq A_{0}$, let $f(y)=0$ if $y\not\in\{y_{A}:A\in\mathcal{B}\}$, and let $f(y_{A_{0}})=-\sum_{y\in B(k)-y_{A_{0}}}f(y)$. By Claim \ref{modp}, $f$ is strongly realizable, which means that there exists a multiset $\mathcal{Q}'$ of copies of $P$ in $B(k)\setminus PR$ such that every $y\in B(k)$ is covered by $f(y)$ members of $\mathcal{Q}'$ modulo $|P|$. For each member of $Q\in\mathcal{Q}'$, pick a different element $x_{Q}\in 2^{[n_{3}]}$. We chose $n_{3}$ such that $2^{n_{3}}\geq N$, so this is possible.  Then $\mathcal{Q}=\{\{x_{Q}\}\times Q:Q\in\mathcal{Q}'\}$ is a $P$-packing in $C$. The members of $\mathcal{Q}$ are disjoint from the members of $\mathcal{P}_{1}$ and $\mathcal{P}_{2}$ as the members of $\mathcal{Q}'$ are disjoint from $PR$. However, members of $\mathcal{Q}$ might intersect members of $\mathcal{P}_{3}$. Nevertheless, by the definition of $f$, for each $A\in\mathcal{B}-A_{0}$ the number of elements of $2^{[n_{3}]}\times A$ covered by $\mathcal{P}_{3}\cup\mathcal{Q}$ (with multiplicity) is congruent to $2^{n_{3}}|A|$ modulo $|P|$.       

In what comes, we modify $\mathcal{P}_{3}$ to ensure that the members of $\mathcal{P}_{3}$ and $\mathcal{Q}$ are disjoint, while keeping the previously described congruency property of $\mathcal{P}_{3}\cup \mathcal{Q}$. Then, this congruency property is equivalent to the statement that the number of elements of $2^{[n_{3}]}\times A$ that are not covered by any member of $\mathcal{P}_{3}\cup \mathcal{Q}$ is divisible by $|P|$ for $A\in\mathcal{B}-A_{0}$.

 We shall modify  $\mathcal{P}_{3}$ in the following way. Suppose that $\{x\}\times P'\in \mathcal{P}_{3}(x)$ intersects $\{x\}\times Q\in\mathcal{Q}$. Then $P'$ can be written as $\{y\}\cup P^{-}$, where $y\in O_{x}\setminus PR$ and $P^{-}\subset \tau(y)$. Then there are two possibilities:

1. Suppose that $y\not\in Q$. As $\tau(y)$ completes $y$, there are at least $|P|$ copies of $P$ in $\tau(y)\cup\{y\}$ that contain $y$ and only intersect in $y$. Hence, we can choose at least one of these copies $P''$ that is disjoint from $Q$. Replace $\{x\}\times P'$ with $\{x\}\times P''$ in $\mathcal{P}_{3}$. Clearly, after this replacement, $\mathcal{P}_{3}$ contains the same number of elements of $2^{[n_{3}]}\times A$ for every $A\in \mathcal{A}$.

2. Now suppose that $y\in Q$. In this case, remove $\{x\}\times P'$ from $\mathcal{P}_{3}$. Let $x_{1},...,x_{s}\in 2^{[n_{3}]}$ be all the elements for which there exists $Q_{i}$ such that $\{x_{i}\}\times Q_{i}\in\mathcal{Q}$. By the definition of $f$, we have $s\equiv f(y)\equiv 0\Mod{|P|}$. Also, $\{x_{1}\}\times P',\dots,\{x_{s}\}\times P'$ are all members of $\mathcal{P}_{3}$ that we remove from $\mathcal{P}_{3}$. Hence, after the removal of $\{x_{1}\}\times P',\dots,\{x_{s}\}\times P'$ from $\mathcal{P}_{3}$, $\mathcal{P}_{3}$ contains $s(|P|-1)$ fewer elements of $2^{[n_{3}]}\times \tau(y)$, and the same number of elements of $2^{[n_{3}]}\times A$ for $A\in \mathcal{B}-\tau(y)$. Therefore, for every $A\in \mathcal{B}$ the number of elements of $2^{[n_{3}]}\times A$ covered by $\mathcal{P}_{3}$ did not change modulo $|P|$ after the modifications. 

\begin{figure}
	\begin{center}
		\includegraphics[scale=0.6,trim={0 6cm 0 0}]{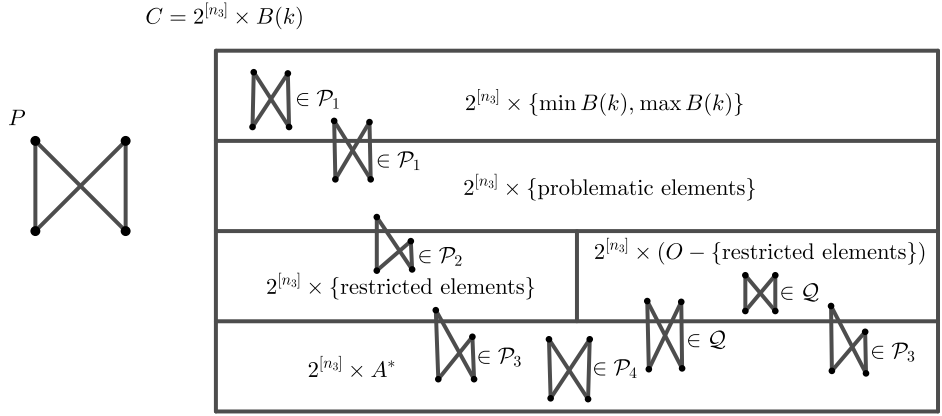}
	\end{center}
	\caption{An illustration of our final $P$-packing, where the drawings of $P$ represent the possible intersections of the copies of $P$ with our five different families. For example, every copy of $P$ intersecting $2^{[n_{3}]}\times \{\mbox{problematic elements}\}$ is either contained in the union of  $2^{[n_{3}]}\times \{\mbox{problematic elements}\}$ and $2^{[n_{3}]}\times \{\min B(k),\max B(k)\}$, or has exactly one element in $2^{[n_{3}]}\times \{\mbox{problematic elements}\}$ and the rest of its elements are in $2^{[n_{3}]}\times \{\mbox{restricted elements}\}$.}
	\label{image2}
\end{figure}

Let us summarize what we have so far. We constructed the $P$-packing $$\mathcal{P}^{*}=\mathcal{P}_{1}\cup\mathcal{P}_{2}\cup\mathcal{P}_{3}\cup Q$$ which has the following properties:

\begin{itemize}
	\item every element of $C^{-}-(2^{[n_{3}]}\times A^{*})$ is covered by $\mathcal{P}^{*}$,
	\item for every $x\in 2^{[n_{3}]}$ and $A\in \mathcal{B}$, $\mathcal{P}^{*}$ covers at most $2|P|$ elements of $\{x\}\times A$,
	\item for every $A\in \mathcal{B}-A_{0}$, the number of elements of $2^{[n_{3}]}\times A$ not covered by $\mathcal{P}^{*}$ is divisible by $P$.
\end{itemize}

But then, for every $A\in \mathcal{B}-A_{0}$, we can apply Claim \ref{absorber_product} to find a $P$-packing $\mathcal{P}_{A}$ that forms a $P$-partition of the uncovered elements $2^{[n_{3}]}\times A$. Also, there is $P$-packing $\mathcal{P}_{A_{0}}$ in the uncovered elements of $2^{[n_{3}]}\times A_{0}$ that covers all but at most $|P|-1$ elements. Set $\mathcal{P}_{4}=\bigcup_{A\in \mathcal{B}}\mathcal{P}_{A}$. 

We conclude the proof of Theorem \ref{mainthm2} by noting that $\mathcal{P}=\mathcal{P}^{*}\cup\mathcal{P}_{4}$ is a $P$-packing in $C$ that covers all but at most $|P|-1$ elements of $C^{-}\cong T(n)$.  See Figure \ref{image2} for an illustration of what kind of copies of $P$ the packing $\mathcal{P}$ is composed of.
	
\section{Concluding remarks}\label{sect:remarks}

In this paper, we proved that if  the poset $P$ has a unique minimum and maximum and size $2^{k}$, then $2^{[n]}$ has a $P$-partition for $n=\Omega(|P|^{8})$. However, we have no reason to believe that this bound on $n$ is sharp. In \cite{T}, it is shown that if $P$ is a chain, then the optimal bound is $\Theta(|P|^{2})$. Therefore, we propose the following conjecture.

\begin{conjecture}
    There exists a constant $c$ with the following property. Let $P$ be a poset with a unique minimum and maximum and size $2^{k}$. If $n\geq c|P|^{2}$, then $2^{[n]}$ has a $P$-partition.
\end{conjecture}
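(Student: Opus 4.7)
The plan is to follow the three-step blueprint of Theorem~\ref{mainthm1}, but to replace the bulky intermediate poset by a much smaller one. Currently Step~1 produces a $P$-partition of $\frac{[h]^{d}}{[h]^{d}}$ only for $h = \Omega(d|P|^{2})$, which is the source of two factors of $|P|^{2}$ in the final bound; Step~2 then requires $m = \Omega(d^{2})$, which is tight by the isoperimetric sharpness of Claim~\ref{matching}; and Step~3 uses Lemma~\ref{longchain} (also tight). Combined with Hiraguchi's bound $d \le |P|/2$, these three losses compound into $|P|^{8}$. To reach $|P|^{2}$, every factor of $d$ must disappear from the final count; since Step~2 already costs $m = \Omega(d^{2})$, the three-step blueprint cannot be salvaged verbatim, and the intermediate block $\frac{[h]^{d}}{[h]^{d}}$ must be retired.

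My proposal is to replace the blueprint as follows. \emph{First}, find a poset $Q$ with $|Q| = O(|P|)$, $|Q|$ a power of $2$, admitting a $P$-partition and a symmetric-chain-style decomposition; for example one could hope that a modest ``doubling'' of $P$ (two copies of $P$ together with a prescribed comparability pattern between them, akin to the $\frac{[h]}{[h]}$ construction from Claim~\ref{pair}, but now applied to $P$ itself rather than to a grid) already admits a $P$-partition. \emph{Second}, embed $Q$ into $2^{[m]}$ with $m = O(\log|P|)$ so that $2^{[m]}$ has a $Q$-partition; this would play the role of Steps~2 and~3 combined, absorbing Step~2 into a logarithmic factor rather than a dimension-dependent polynomial factor. \emph{Third}, apply Lemma~\ref{longchain} to partition $2^{[n]}$ into chains of length $2^{m}$ for $n = \Theta(2^{2m}) = |P|^{O(1)}$ and enrich each chain into a copy of $2^{[m]}$ by the standard cartesian product decomposition $2^{[n]} \cong (2^{[s]})^{n/s}$ as in the proof of Theorem~\ref{mainthm1}. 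The arithmetic works out to $n = O(|P|^{2})$ precisely when $m = O(\log|P|)$ and $|Q| = O(|P|)$.

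The main obstacle is Step~1, namely the existence of a small universal host $Q$ admitting a $P$-partition. For chains this is trivial ($Q = P$), but for general $P$ with unique minimum and maximum it seems genuinely hard: it is essentially asking for a poset of nearly the same size as $P$ that tiles by copies of $P$. The absorber construction of Section~\ref{sect:absorber} gives absorbers that do tile by $P$, but their size depends badly on $|P|$, so a new construction is needed. One promising direction is to exploit the fact that $|P|$ is a power of $2$ together with the unique minimum and maximum to build $Q$ inductively on $k$, doubling the construction at each step in the spirit of the Partition Lemma, and to show that such a $Q$ can always be placed inside $2^{[O(\log|P|)]}$. Failing that, a weaker version with $|Q| = O(|P| \cdot \mathrm{polylog}|P|)$ would still yield $n = O(|P|^{2} \cdot \mathrm{polylog}|P|)$, resolving the conjecture up to polylogarithmic factors, and would be a natural first target.
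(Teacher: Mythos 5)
This statement is one of the paper's concluding \emph{conjectures}: the paper offers no proof of it, and explicitly leaves it open (the best bound proved there is $n=\Omega(|P|^{8})$, via Theorem~\ref{mainthm1}). Your submission is likewise not a proof but a research plan, and you yourself flag that its first step --- the existence of a small universal host $Q$ with $|Q|=O(|P|)$ that tiles by copies of $P$ --- is unresolved. That alone means there is nothing to check against the paper; but it is worth recording that the plan has concrete defects beyond the acknowledged open step.

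First, the requirement that $Q$ embed into $2^{[m]}$ with $m=O(\log|P|)$ is impossible in general: since $Q$ is partitioned into copies of $P$, it contains a copy of $P$, so $P$ itself must embed into $2^{[m]}$. If $P$ is a chain of length $2^{k}$ (the case already covered by Lemma~\ref{longchain}), any embedding requires $m\geq 2^{k}-1=|P|-1$, so $m$ must be allowed to be linear in $|P|$, not logarithmic; the ``arithmetic works out'' sentence therefore rests on a parameter regime that cannot occur. Second, the final step conflates two different objects: a chain of size $2^{m}$ produced by Lemma~\ref{longchain} is not a copy of $2^{[m]}$ and cannot be ``enriched'' into one; the cartesian decomposition $2^{[n]}\cong(2^{[s]})^{n/s}$ used in the proof of Theorem~\ref{mainthm1} turns a tuple of chains into a \emph{grid} $[2h]^{n/s}$, not a Boolean cube (a copy of $2^{[m]}$ is obtained trivially as $2^{[n]}=2^{[m]}\times 2^{[n-m]}$, but then the whole chain machinery is doing nothing). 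If you redo the bookkeeping with the paper's actual mechanism, getting $n=O(|P|^{2})$ forces the intermediate grid to be $[O(|P|)]^{O(1)}$; since a constant-dimensional grid cannot even contain posets of large Dushnik--Miller dimension, any proof of the conjecture must leave this blueprint entirely, which is consistent with the problem still being open.
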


Also, in Theorem \ref{mainthm2}, one can backtrack our proof to find that $n_{0}(P)=2^{2^{O(|P|\log |P|)}}$. However, we conjecture that the right order of magnitude of $n_{0}(P)$ is also $|P|^{2}$.

\begin{conjecture}
	 There exists a constant $c$ with the following property. Let $P$ be a poset. If $n\geq c|P|^{2}$, then $2^{[n]}-\{\emptyset,[n]\}$ has a $P$-packing that covers all but at most $|P|-1$ elements.
\end{conjecture}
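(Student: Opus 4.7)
The aim would be to replace every step of the proof of Theorem \ref{mainthm2} with one whose parameter dependence is polynomial in $|P|$, ultimately yielding $n_{0}(P) = O(|P|^{2})$. Since $\Omega(|P|^{2})$ is already forced by the chain case (Lemma \ref{longchain} is tight up to constants, see \cite{T}), the whole game is to eliminate the two exponential blowups in the current argument: the absorber dimension $d = 2^{\Omega(|P|)}$ in Claim \ref{absorber}, and the cartesian-power parameter $k \geq 100 n_{1} 2^{n_{1}}$ used in Section \ref{sect:denseabsorber} to suppress problematic elements.

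First, I would start directly from Lemma \ref{longchain}, partitioning $2^{[n]}$ into chains of length $2h$ with $h = \Theta(|P|)$, which already achieves $n = O(|P|^{2})$. The goal is then to group chains into product-like gadgets of size $\mathrm{poly}(|P|)$, playing the role of Claim \ref{absorber_product} but without exponential overhead. A natural candidate is to combine the pairing construction of Claim \ref{pair} with the $d$-dimensional embedding $P \hookrightarrow [|P|]^{d}$, where $d \leq |P|/2$, and arrange that all divisibility residues modulo $|P|$ inside a gadget can be corrected by relocating only a constant number of minimal or maximal elements between neighbouring gadgets through the special-element mechanism from Claim \ref{special}.

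Second, I would aim to replace the giant cartesian power from Section \ref{sect:denseabsorber} by a direct probabilistic construction on a single copy of $2^{[n]}$. Concretely, I would pick a random collection of polynomial-size gadgets and argue, via union bounds of the same flavour as in Claim \ref{absorber_packing}, that with positive probability every element of $T(n)$ is completed by some gadget, every gadget is disjoint from the rest, and the problematic sets of size $1$ or $n-1$ are handled by a separate small packing along the lines of Lemma \ref{minmaxlemma}. Since the gadgets now have size $\mathrm{poly}(|P|)$ rather than $2^{\Omega(|P|)}$, the probability thresholds in Claim \ref{absorber_packing} should only require $n = \mathrm{poly}(|P|)$; the Lemma \ref{modp} style divisibility correction should then follow by the same connectivity argument as in Section \ref{sect:modp}, using only $O(1)$ extra dimensions in place of $n_{3}$.

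The hardest obstacle, and the real reason this conjecture remains open, is step one: designing an absorber whose size is polynomial in $|P|$ yet which can absorb an arbitrary deficit of $O(|P|)$ elements. The current proof is forced to take $d$ exponential because it invokes Theorem \ref{La} to fill in a dense packing, and the $o(1)$ error term in that result needs exponential redundancy to beat down to a constant. A successful approach would likely have to bypass Theorem \ref{La} entirely and instead exploit the chain-partition machinery of Lemma \ref{longchain} together with a fine-grained understanding of how copies of $P$ sit across adjacent chains, perhaps via an explicit gadget tailored to the dimension $d$ of $P$ rather than to $|P|$ itself. Without a new absorber design of this kind, even optimistic bookkeeping in steps two and three will not break the tower-type barrier.
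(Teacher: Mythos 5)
This statement is a conjecture that the paper itself leaves open: the author's own method only gives $n_{0}(P)=2^{2^{O(|P|\log|P|)}}$, and the concluding remarks explicitly propose the $c|P|^{2}$ bound as an unproved problem. What you have written is therefore not a proof but a research plan, and you concede as much in your final paragraph ("the real reason this conjecture remains open"). A proposal that ends by identifying its own missing central ingredient cannot be accepted as a proof of the statement.

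Concretely, the gap is the absorber. Your plan to "combine the pairing construction of Claim \ref{pair} with the $d$-dimensional embedding" does not apply to general posets: Claim \ref{pair} and the whole of Section \ref{sect:thm0} require $P$ to have a unique minimum and maximum, which is exactly the hypothesis the conjecture drops. For general $P$, the paper's absorber needs $d\geq 4C_{P}^{2}2^{2|P|+r}$ because the greedy extension via Theorem \ref{La} leaves $C_{P}2^{d}/\sqrt{d}$ holes, and these must be outnumbered by the $2^{d-|P|}$ special copies supplied by Claim \ref{manyembedding}; you give no replacement mechanism that fills a $d$-dimensional cube up to an $O(|P|)$ deficit when $d=\mathrm{poly}(|P|)$. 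The assertion that "all divisibility residues modulo $|P|$ inside a gadget can be corrected by relocating only a constant number of minimal or maximal elements" is precisely the content one would need to prove, and nothing in your sketch establishes it. Likewise, replacing the cartesian power $B(k)$ with a single random collection of polynomial-size gadgets silently discards the role that the product structure plays in Lemma \ref{modp} and Claim \ref{problematic} (handling problematic and restricted elements); merging these into "a separate small packing along the lines of Lemma \ref{minmaxlemma}" is not justified. Your diagnosis of where the exponential losses occur is accurate, but diagnosis is not a proof.
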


In Section \ref{sect:thm0}, we proved that if $P$ has a unique minimum and maximum and dimension $d$, then the grid $[2h]^{m}$ has a $P$-partition, where $m=O(d^{2})$, $|P|$ divides $h$, and $h$ is sufficiently large. One might wonder that what is the smallest dimension $m$ such that $[h]^{m}$ has a $P$-partition for some $h$. We claim that our proofs can be modified to show that $m\leq 2d-1$. Moreover, there are posets for which this bound is sharp: take $P=\frac{[2]^{d}}{[2]^{d}}$ for example. We omit the proofs.

 

\end{document}